\documentclass[12pt]{article}
\usepackage{times}
\usepackage{amsmath}
\usepackage{latexsym}
\usepackage{amsfonts}
\usepackage{color}
\usepackage{amssymb}
\textwidth=162mm \textheight=230mm \oddsidemargin=0mm \evensidemargin=0mm \topmargin=0mm
\usepackage{latexsym}
\newcounter{theorem}

\renewcommand{\thetheorem}{\arabic{section}.\arabic{theorem}}

\newenvironment{thm}[1]{\par\addvspace{0.5cm}
    \begin{sloppypar}\refstepcounter{theorem}%
    {\bf #1 \thetheorem.}\it{}}{\end{sloppypar}}

\newcommand{\eh}{\hfill}\newlength{\sperr}

\newenvironment{theorem}{\begin{thm}{Theorem}} {\end{thm}}
\newenvironment{lemma}{\begin{thm}{Lemma}} {\end{thm}}
\newenvironment{corollary}{\begin{thm}{Corollary}} {\end{thm}}

\newenvironment{defi}[1]{\par\addvspace{0.5cm}
\begin{sloppypar}\refstepcounter{theorem}%
{\bf #1 \thetheorem.}\rm{}}{\end{sloppypar}}

\newenvironment{definition}{\begin{defi}{Definition}}{\end{defi}}
\newenvironment{example}{\begin{defi}{Example}}{\end{defi}}
\newenvironment{remark}{\begin{defi}{Remark}}{\end{defi}}

\newenvironment{proof}{{\settowidth{\sperr}{\rm Proof}
\par\addvspace{0.3cm}\parbox[t]{1.3\sperr}{\rm P\eh r\eh o\eh o\eh f\eh. }%
}}{\nopagebreak\mbox{}\hfill $\Box$\par\addvspace{0.25cm}}

\newcommand{\al}{\alpha}
\newcommand{\bt}{\beta}
\newcommand{\dl}{\delta}

\newcommand{\Om}{\Omega}
\newcommand{\lb}{\lambda}

\newcommand{\ve}{\varepsilon}
\newcommand{\gm}{\gamma}
\newcommand{\vi}{\varphi}
\newcommand{\Gm}{\Gamma}
\newcommand{\sg}{\sigma}

\newcommand{\intl}{\int\limits}

\newcommand{\rone}{\mathbb{R}^1}

\begin{document}

\centerline{\textbf{\Large Weighted Hardy and singular operators}} \centerline{\textbf{\Large in
Morrey spaces}}

\vspace{4mm} \centerline{\textbf{Natasha Samko}}

\begin{abstract}

We study the weighted boundedness of the Cauchy
 singular integral operator $S_\Gm$ in Morrey spaces $L^{p,\lambda}(\Gm)$ on curves satisfying
 the  arc-chord condition, for a class of  "radial type" almost monotonic weights.
 The non-weighted boundedness is shown to hold on an arbitrary Carleson curve. We show that the weighted
   boundedness is reduced to the boundedness
of weighted Hardy operators  in
 Morrey spaces $L^{p,\lambda}(0,\ell), \ell>0$. We find conditions for weighted Hardy
operators to be bounded  in Morrey spaces. To cover the case of curves we also extend the
boundedness of the Hardy-Littlewood maximal operator in Morrey spaces, known in the Euclidean
setting, to the case of Carleson curves.

\bigskip

\noindent{\bf 2000 Mathematics Subject Classification:} 46E30, 42B35, 42B25,  47B38.

\noindent{\bf Key words and phrases:} Morrey space, singular operator, Hardy operator,
Hardy-Littlewood maximal operator, weighted estimate.
\end{abstract}

\setcounter{equation}{0}\setcounter{theorem}{0}

\section{Introduction} \label{intro}

The well known Morrey spaces  $\mathcal{L}^{p,\lb}$ introduced in \cite{405a} in relation to the
study of partial differential equations, and  presented in various books, see \cite{187a},
\cite{348a}, \cite{655c}, were widely investigated during last decades, including the study of
classical operators of harmonic analysis - maximal, singular and potential operators - in these
spaces; we refer for instance to the  papers \cite{9e}, \cite{9f}, \cite{18f}, \cite{26c},
\cite{87b}, \cite{160bz}, \cite{107f},  \cite{412zz}, \cite{412z}, \cite{412zb}, \cite{463b},
\cite{472a}, \cite{473}, \cite{504za}, \cite{621a}, \cite{638a}, \cite{641a}, where  Morrey spaces
on metric measure spaces may be  also found. In particular, for the boundedness of the maximal
operator in Morrey spaces we refer to  \cite{87b}, while the boundedness of Calderon-Zygmund type
singular operators is known from \cite{472a}, \cite{473}, \cite{638a}.

Meanwhile weighted estimations of these operators in Morrey spaces
 were not studied (to our surprise, we did not find any
such weighted result for maximal and singular operators in the
literature). We are mainly interested in weighted estimations of
singular operators. In this paper we deal with the one-dimensional
case and study the weighted boundedness of the Cauchy singular
integral operator
\begin{equation}\label{4ss}
S_\Gm f(t)= \frac{1}{\pi}\int_{\Gm}\frac{f(\tau)\, d \tau}{\tau-t}
\end{equation}
along curves on complex plane, such a weighted estimation  being a key point for applications  to
the solvability theory of singular integral equations. We refer to \cite{171}, \cite{411z},
\cite{207}, \cite{208}, \cite{63} for this theory.

We obtain weighted estimates of the operator $S_\Gm$ in Morrey spaces  $\mathcal{L}^{p,\lb}(\Gm)$
along an arbitrary curve satisfying the arc-chord condition, in case of weights $\varrho$ of the
form
\begin{equation}\label{vydel}
\varrho(t)=\prod_{k=1}^N\vi_k(|t-t_k|), \quad t_k\in\Gm,
\end{equation}
with almost monotone functions $\vi_k$, the necessary and sufficient condition for the boundedness
being given in terms of the Matuszewska-Orlicz indices of these functions.

We start with the case where $\Gm=[0,\ell]$ is an interval of the real axis. We make use of the
known non-weighted boundedness of $S_\Gm$ in this case, which enables us to reduce the boundedness
of $S_\Gm$ with   weight \eqref{vydel} to the boundedness of weighted Hardy operators. We prove
their boundedness in the Morrey space $\mathcal{L}^{p,\lb}(0,\ell)$. Surprisingly we did not find
any statement on the boundedness of Hardy operators in Morrey spaces in the literature. In the
context of Morrey spaces, Hardy operators seem to have appeared only in \cite{69ab} in a different
aspect: the problem of the boundedness of the maximal operator in local Morrey spaces was reduced
to an $L_p$-boundedness of Hardy operators on a cone of monotone functions.

To cover the case of an arbitrary curve $\Gm$ we have to prove first the non-weighted boundedness
of the singular operator on curves, which was unknown, up to our knowledge. To this end, we first
prove the boundedness of the maximal operator along a Carleson curve, in
$\mathcal{L}^{p,\lb}(\Gm)$, but give the proof within the frameworks of homogeneous spaces with
constant dimension, Carleson curves being examples of such a space. Then we derive the non-weighted
boundedness of $S_\Gm$ via the Alvarez-P\'erez-type pointwise estimate
\begin{equation}\label{alvar}
M^\#(|S_\Gm f|^s)(t)\le C [Mf(t)]^s, \quad 0<s<1.
\end{equation}
known in the Euclidean setting for Calderon-Zygmund singular operators (\cite{19}) and extended to
the case of the operator $S_\Gm$ along Carleson curves in \cite{317b}, Proposition 6.2.

The paper is organized as follows. In Section \ref{prelim} we provide necessary preliminaries on
Morrey spaces on metric measure spaces, on almost monotonic  weights and their Matuszewska-Orlicz
indices. In Section \ref{Hardy} we give  sufficient conditions of the boundedness of weighted Hardy
operators in Morrey spaces $\mathcal{L}^{p,\lb}(0,\ell)$ in terms of the indices of the weight.
These conditions are necessary in the case of power weights. In Section \ref{weightedHilbert} we
give sufficient conditions of weighted boundedness of the singular operator along $(0,\ell)$, which
prove to be also necessary for power weights. In Section \ref{Singular} we first extend
Chiarenza-Frasca's proof (\cite{87b}) of the boundedness of the maximal operator  to the case of
metric measure spaces $X$ with constant dimension and prove the Fefferman-Stein inequality
$\|Mf\|_{L^{p,\lb}(X)}\le C \|M^\#f\|_{L^{p,\lb}(X)}$, to derive the non-weighted boundedness of
$S_\Gm$ via \eqref{alvar}. Finally in Section \ref{finalsection}, we prove the weighted boundedness
of $S_\Gm$.

\section{Preliminaries}\label{prelim}

\setcounter{equation}{0}\setcounter{theorem}{0}

\subsection{Morrey spaces on homogeneous spaces}\label{Morrey}

Let $(X,d,\mu)$ be  a homogeneous metric measure space with quasidistance $d$ and measure $\mu$. We
refer to \cite{97}, \cite{187}, \cite{225a} for analysis in homogeneous spaces. Morrey spaces on
metric measure spaces were studied in \cite{26c}, \cite{412za}, \cite{722b}. Our main goal in the
sequel is the case where $X$ is a Carleson curve in the complex plane with $\mu$ an arc-length,
although some auxiliary statements will be given in a more general setting.  By this reason we
restrict ourselves to the case where $X$ has constant dimension: there exists a number $N>0$ (not
necessarily integer) such that
\begin{equation}\label{2}
C_1r^N\le \mu B(x,r)\le C_2r^N,
\end{equation}
where the constants $C_1>0$ and $C_2>0$ do not depend on $x\in X$ and $r>0$. In this case the
Morrey space $\mathcal{L}^{p,\lb}(X)$ may be defined  by the norm:
\begin{equation}\label{1}
\|f\|_{p,\lb}=\sup\limits_{x\in X,r>0} \left\{\frac{1}{r^\lb}\intl_{B(x,r)}|f(y)|^p\,d\mu(y)
\right\}^\frac{1}{p},
\end{equation}
where $1\le p <\infty$ and  $0\le \lb < N$ and the standard notation $B(x,r)=\{y\in X: d(x,y):r\}$
is used.

\subsection{The case of Carleson curves}

Let $\Gm$ be a bounded rectifiable curve  on the complex plane $\mathbb{C}$. We denote
$\tau=t(\sg), \ \ \ \ t=t(s)$ where $\sg$ and $s$ stand for the arc abscissas of the points $\tau$
and $t$,
 and $d\mu(\tau)=d\sg$ will stand for the arc-measure on $\Gm$.
We also introduce the notation
 $$\Gm(t,r)=\{\tau\in\Gm:
|\tau-t|<r\}\quad \textrm{and} \quad \Gm_\ast(t,r)=\{\tau\in\Gm: |\sg-s|<r\},$$ so that
$\Gm_\ast(t,r)\subseteq \Gm(t,r),$ and denote $ \ell=\mu\Gm= \ \textrm{lengths of } \ \Gm.$

\begin{definition}\label{defC} A a curve $\Gm$ is said to be a \textit{Carleson curve}, if
$$\mu \Gm(t,r)\le C r$$
for all $t\in\Gm$ and $r>0$, where $C>0$ does not depend on $t$ and $r$. A curve $\Gm$ is said to
satisfy \textit{the arc-chord condition at a point} $t_0=t(s_0)\in\Gm$, if there exists a constant
$k>0$, not depending on $t$ such that
\begin{equation}\label{arcchord}
|s-s_0|\le k |t-t_0|, \quad  t=t(s)\in \Gm.
\end{equation}
Finally, a curve $\Gm$ is said to satisfy the \textit{(uniform) arc-chord condition}, if
\begin{equation}\label{arcchorduniform}
|s-\sg|\le k |t-\tau|, \quad  t=t(s), \tau=t(\sg)\in \Gm.
\end{equation}

\end{definition}
In the sequel  $\Gm$ is always assumed to be a Carleson curve.

The  Morrey spaces $\mathcal{L}^{p,\lb}(\Gm)$ on $\Gm$ are defined in the usual way, as in
\eqref{1},  via the norm
\begin{equation}\label{replace1}
\|f\|_{p,\lb}=\sup\limits_{t\in\Gm,r>0}
\left\{\frac{1}{r^\lb}\intl_{\Gm(t,r)}|f(\tau)|^p\,d\mu(\tau)
\right\}^\frac{1}{p},
\end{equation}
where $1\le p <\infty$ and  $0\le \lb \le 1$. For brevity we
denote  $\|f\|_p=\left(\intl_\Gm |f(\tau)|^p d\mu(\tau)
\right)^\frac{1}{p}$, so that
\begin{equation}\label{replace2}
\|f\|_{p,\lb}=\sup\limits_{t,r}
\left\|\frac{\chi_{\Gm(t,r)}(\cdot)}{r^\frac{\lb}{p}}f(\cdot)\right\|_p.
\end{equation}

\begin{remark}\label{rem6}
One may define another version $\mathcal{L}^{p,\lb}_\ast(\Gm)$ of the Morrey space, in terms of the
arc neigbourhood $\Gm_\ast(t,r)$ of the point $t\in\Gm$,  by the norm
\begin{equation}\label{replace1ast}
\|f\|_{p,\lb}^\ast=\sup\limits_{t\in\Gm,r>0}
\left\{\frac{1}{r^\lb}\intl_{\Gm_\ast(t,r)}|f(\tau)|^p\,d\mu(\tau) \right\}^\frac{1}{p}
\end{equation}
so that $\mathcal{L}^{p,\lb}_\ast(\Gm) \subseteq \mathcal{L}^{p,\lb}(\Gm)$ in case of an arbitrary
curve. These spaces coincide, up to equivalence of the norms, when $\Gm$ satisfies the arc-chord
condition. If $\Gm$ has casps, these spaces may be different. If, for instance,  a bounded curve
$\Gm$ satisfies the condition
$$C|s-\sg|^a\le |t-\tau|, \quad C>0$$
for some $a\ge 1$, then $\mathcal{L}^{p,a\lb}(\Gm)\subseteq \mathcal{L}^{p,\lb}_\ast(\Gm) \subseteq
\mathcal{L}^{p,\lb}(\Gm).$
\end{remark}

\begin{lemma}\label{replacelem0}
Let $\Gm$ be  bounded rectifiable curve. For the power function $|t-t_0|^\gm$, $t_0\in\Gm$, to
belong to the Morrey space $\mathcal{L}^{p,\lb}(\Gm), 1\le p<\infty, \ 0<\lb <1$, the condition
\begin{equation}\label{replace3a}
\gm \ge \frac{\lb-1}{p}
\end{equation}
is necessary. It is also sufficient if  $\Gm$ is a Carleson curve
\end{lemma}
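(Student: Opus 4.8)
The plan is to evaluate, for $f(\tau)=|\tau-t_0|^{\gm}$, the Morrey quantity in \eqref{replace1} as a power of $r$ and to read the condition \eqref{replace3a} off the exponent. For necessity (which needs no Carleson hypothesis) I would use that on any rectifiable curve the chord does not exceed the arc, so $|\tau-t_0|\le|\sg-s_0|$ for the arc abscissas $s_0,\sg$ of $t_0,\tau$, whence $\Gm_\ast(t_0,r)\sbs\Gm(t_0,r)$. Testing \eqref{replace1} at $t=t_0$ and using, when $\gm p<0$, the inequality $|\tau-t_0|^{\gm p}\ge|\sg-s_0|^{\gm p}$ on $\Gm_\ast(t_0,r)$, I obtain
\[
\frac{1}{r^\lb}\intl_{\Gm(t_0,r)}|\tau-t_0|^{\gm p}\,d\mu(\tau)\ \ge\ \frac{1}{r^\lb}\intl_{|\sg-s_0|<r}|\sg-s_0|^{\gm p}\,d\sg\ \ge\ c\,r^{\gm p+1-\lb}
\]
for small $r$ (a one-sided integral with the same exponent if $t_0$ is an endpoint). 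If $\gm p\le-1$ the middle integral diverges, and if $-1<\gm p<\lb-1$ the right-hand side blows up as $r\to0$; either way the norm is infinite, so membership forces $\gm p+1-\lb\ge0$, i.e. \eqref{replace3a}.

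For sufficiency on a Carleson curve, assume \eqref{replace3a}, so $\gm p\ge\lb-1>-1$ (here $\lb>0$ is used). If $\gm\ge0$ the integrand is bounded by $(\mathrm{diam}\,\Gm)^{\gm p}$, and the Carleson estimate $\mu\Gm(t,r)\le Cr$ from Definition \ref{defC} bounds the quantity in \eqref{replace1} by a multiple of $r^{1-\lb}$, which is controlled for bounded $r$, while for large $r$ the ball saturates to $\Gm$ and the factor $r^{-\lb}$ only helps. The essential case is $\gm<0$; writing $\beta:=\gm p\in(\lb-1,0)$ I would split according to $d_0:=|t-t_0|$. If $d_0\ge2r$, then $|\tau-t_0|\ge d_0/2\ge r$ on $\Gm(t,r)$, so $|\tau-t_0|^{\beta}\le r^{\beta}$, and the Carleson bound gives a multiple of $r^{\beta+1-\lb}$, which is bounded because $\beta+1-\lb\ge0$ and $r$ is bounded. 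If $d_0<2r$, then $\Gm(t,r)\sbs\Gm(t_0,3r)$, and everything reduces to the single estimate in the next paragraph.

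The crux, and the only place where the geometry of a general Carleson curve really enters, is the bound
\[
\intl_{\Gm(t_0,R)}|\tau-t_0|^{\beta}\,d\mu(\tau)\ \le\ C\,R^{\beta+1},\qquad\beta>-1,
\]
which I would obtain by splitting $\Gm(t_0,R)$ into the dyadic annuli $A_j=\{\tau:2^{-j-1}R\le|\tau-t_0|<2^{-j}R\}$, $j\ge0$. On $A_j$ one has $|\tau-t_0|^{\beta}\le(2^{-j-1}R)^{\beta}$ since $\beta<0$, while $\mu A_j\le\mu\Gm(t_0,2^{-j}R)\le C\,2^{-j}R$; the resulting geometric series $\sum_{j\ge0}2^{-j(\beta+1)}$ converges precisely because $\beta+1>0$. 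Taking $R=3r$ in the case $d_0<2r$ then again produces a multiple of $r^{\beta+1-\lb}$, and combining the three cases shows the norm is finite. Thus the hypothesis \eqref{replace3a} is consumed exactly at the convergence of this dyadic sum (through $\gm p+1>\lb>0$), which is the main technical point; the rest is bookkeeping over the position of $t_0$ relative to the ball $\Gm(t,r)$.
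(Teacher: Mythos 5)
Your proof is correct and takes essentially the same route as the paper's: necessity by testing the norm at $t=t_0$ and using chord $\le$ arc to pass to the one-dimensional integral $\int_{|\sg-s_0|<r}|\sg-s_0|^{\gm p}\,d\sg$, and sufficiency by the case split $|t-t_0|>2r$ versus $|t-t_0|\le 2r$ followed by the dyadic-annuli estimate using the Carleson condition. You merely spell out the details the paper compresses into ``standard arguments'' (the endpoint case, the large-$r$ saturation, and the precise role of $\lb>0$ in the convergence of the geometric series), which is fine.
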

\begin{proof}\
\textit{The  necessity  part}. Let $|t-t_0|^\gm\in \mathcal{L}^{p,\lb}(\Gm)$. We suppose that
$\gm<0$, since there is nothing to prove when $\gm\ge 0$.  With $t_0=t(s_0)$ we have
\begin{equation}\label{replace15}
\||\tau-t_0|^\gm\|\ge
\sup\limits_{r>0}\left(\frac{1}{r^\lb}\intl_{\Gm(t_0,r)}|\tau-t_0|^{\gm
p} d\mu(\tau) \right)^\frac{1}{p}
 \ge  \sup\limits_{r>0}\left(\frac{1}{r^\lb}\intl_{|\sg-s_0|<r}|\sg-s_0|^{\gm p}
d\sg \right)^\frac{1}{p},
\end{equation}
where we have taken into account that $\Gm(t,r)\supseteqq \Gm_\ast(t,r)$ and $\gm$ is negative.
Since $|t-t_0|^\gm\in L^{p}(\Gm)$, by similar arguments we see that  $\gm p>-1$. Then we get
$$\||\tau-t_0|^\gm\|_{p,\lb}\ge \left(\frac{2}{\gm p+1}\right)^\frac{1}{p} \sup\limits_{r>0}r^\frac{\gm
p+1-\lb}{p}
$$
which may be finite only when $\gm p+1-\lb\ge 0.$

\textit{The  sufficiency  part}.  Let $\gm \ge \frac{\lb-1}{p}$. Again we may assume that $\gm$ is
negative. To estimate
$$\||\tau-t_0|^\gm\|_{p,\lb}= \sup\limits_{t,r}\left(\frac{1}{r^\lb}\intl_{\Gm(t,r)}|\tau-t_0|^{\gm p}
d\mu(\tau) \right)^\frac{1}{p},
$$
we distinguish the cases $|t-t_0|>2r$ and $|t-t_0|\le 2r$. In the first case we have $|\tau-t_0|\ge
|t-t_0|-|\tau-t|>r$ so that $|\tau-t_0|^{\gm p}< r^{\gm}$ and then
$$\||\tau-t_0|^\gm\|_{p,\lb}\le \sup\limits_{t,r}\left(r^{\gm p-\lb}\intl_{\Gm(t,r)}
d\mu(\tau) \right)^\frac{1}{p}= Cr^\frac{\gm p-\lb-1}{p}<\infty$$ where we used the fact that $\Gm$
is a Carleson curve. In the case $|t-t_0|\le 2r$ we have $\Gm(t,r)\subseteq \Gm(t_0,3r)$. Then
$$\||\tau-t_0|^\gm\|_{p,\lb}\le \sup\limits_{r}\left(\frac{1}{r^\lb}\sum\limits_{k=0}^\infty
\intl_{\Gm_k(t_0,r)}|\tau-t_0|^{\gm p} d\mu(\tau) \right)^\frac{1}{p},
$$
where $ \Gm_k(t_0,r)=\{\tau: 3\cdot 2^{-k-1}r<|\tau-t_0|<3\cdot 2^{-k}r\}$. Hence
$$\||\tau-t_0|^\gm\|_{p,\lb}\le C \sup\limits_{r}\left(\frac{1}{r^{\lb-\gm p}}
\sum\limits_{k=0}^\infty \frac{1}{2^{\gm p k}}\intl_{\Gm(t_0,2^{-k+1}r)} d\mu(\tau)
\right)^\frac{1}{p}
$$
 and we arrive at
the conclusion by standard arguments.
\end{proof}
\begin{remark}\label{replacerem1} The case $\lb>0$ differs from the case $\lb=0$: when $\lb=0$, condition
\eqref{replace3a} must be replaced by the condition $\gm>-\frac{1}{p}.$
\end{remark}

In the limiting case $\mu=\frac{\lb-1}{p}$ admitted in Lemma \ref{replacelem0} for power functions,
it is not possible to take a power-logarithmic function, as shown in the next lemma.
\begin{lemma}\label{replacelem3}
Let $\Gm$ be  bounded rectifiable curve. The function
$|t-t_0|^\frac{\lb-1}{p}\ln^\nu\frac{A}{|t-t_0|}$, where $t_0\in \Gm, \nu >0$ and $A\ge D$, does
not belong to $\mathcal{L}^{p,\lb}(\Gm).$
\end{lemma}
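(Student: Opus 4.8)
The plan is to mimic the necessity argument of Lemma \ref{replacelem0}: to show a function is \emph{not} in the Morrey space it suffices to exhibit a single family of balls on which the defining quotient blows up, and the natural choice is the family of discs centred at the singularity $t_0$ as $r\to 0$. Writing $f(t)=|t-t_0|^\frac{\lb-1}{p}\ln^\nu\frac{A}{|t-t_0|}$ and $t_0=t(s_0)$, I would first restrict the supremum in \eqref{replace1} to $t=t_0$ and then pass to the arc neighbourhood, using $\Gm_\ast(t_0,r)\subseteq\Gm(t_0,r)$, to obtain for every $r>0$ the lower bound $\|f\|_{p,\lb}^p \ge \frac{1}{r^\lb}\intl_{|\sg-s_0|<r}|t(\sg)-t_0|^{\lb-1}\ln^{\nu p}\frac{A}{|t(\sg)-t_0|}\,d\sg$, where on $\Gm_\ast(t_0,r)$ the arc measure is simply $d\mu=d\sg$.

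Next I would replace the chord $|t(\sg)-t_0|$ by the arc length $|\sg-s_0|$. Since a chord never exceeds the subtended arc, $|t(\sg)-t_0|\le|\sg-s_0|$; because $\lb-1<0$ this yields $|t(\sg)-t_0|^{\lb-1}\ge|\sg-s_0|^{\lb-1}$, and because the hypothesis $A\ge D$ guarantees $A\ge|\sg-s_0|$ the logarithm stays nonnegative, so $\ln^{\nu p}\frac{A}{|t(\sg)-t_0|}\ge\ln^{\nu p}\frac{A}{|\sg-s_0|}\ge 0$. Both inequalities point the same way, so after the substitution $u=|\sg-s_0|$ the integral is bounded below by $2\intl_0^r u^{\lb-1}\ln^{\nu p}\frac{A}{u}\,du$ (the constant $2$ is harmless and is merely replaced by $1$ if $t_0$ happens to be an endpoint of $\Gm$).

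It then remains to show that $\frac{1}{r^\lb}\intl_0^r u^{\lb-1}\ln^{\nu p}\frac{A}{u}\,du\to\infty$ as $r\to0^+$. The cleanest way is the rescaling $u=rv$, under which the quotient becomes $\intl_0^1 v^{\lb-1}\left(\ln\frac{A}{r}+\ln\frac{1}{v}\right)^{\nu p}dv$; for $r<A$ this is at least $\ln^{\nu p}\frac{A}{r}\intl_0^1 v^{\lb-1}\,dv=\frac{1}{\lb}\ln^{\nu p}\frac{A}{r}$, where the final integral converges precisely because $\lb>0$. Letting $r\to0$ then forces $\|f\|_{p,\lb}=\infty$, i.e. $f\notin\mathcal{L}^{p,\lb}(\Gm)$.

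I do not anticipate a serious obstacle: this is essentially the borderline exponent $\gm=\frac{\lb-1}{p}$ of Lemma \ref{replacelem0}, at which the pure power is only barely admissible, so that the extra factor $\ln^\nu$ is exactly what tips the integral over the edge. The two points demanding care are keeping the logarithm nonnegative throughout the range of integration—this is the sole purpose of the assumption $A\ge D$—and checking that the scaling genuinely produces the divergent factor $\ln^{\nu p}\frac{A}{r}$ rather than having it cancel against $r^{-\lb}$; the substitution $u=rv$ renders both issues transparent and reduces the whole matter to the convergence of $\intl_0^1 v^{\lb-1}\,dv$.
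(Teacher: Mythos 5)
Your proposal is correct and follows essentially the same route as the paper's own proof: restrict the supremum to balls centred at $t_0$, pass to the arc parametrization via $\Gm_\ast(t_0,r)\subseteq\Gm(t_0,r)$ together with chord $\le$ arc and the negativity of the exponent $\lb-1$, rescale $u=rv$, and extract the divergent factor $\ln^{\nu p}\frac{A}{r}$ as $r\to 0$. The only cosmetic point is that $A\ge D$ does not literally give $A\ge|\sg-s_0|$ for all arcs (arc length may exceed the chord diameter), but since you only need the bound for small $r$, your later restriction to $r<A$ already takes care of this, exactly as the paper restricts to $0<r<\dl$.
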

\begin{proof}
As in  \eqref{replace15},  we have
\begin{equation}\label{replace15b}
\left\||\tau-t_0|^\frac{\lb-1}{p}\ln^\nu\frac{A}{|\tau-t_0|}\right\|_{p,\lb}\ge
\sup\limits_{r>0}\left(\frac{1}{r^\lb}\intl_{|\tau-t_0|<r}|\tau-t_0|^{\lb-1}\ln^{\nu
p}\frac{A}{|\tau-t_0|} d\mu(\tau) \right)^\frac{1}{p}
\end{equation}
$$\ge \sup\limits_{r>0}\left(\frac{1}{r^\lb}\intl_{|\sg-s_0|<r}|\sg-s_0|^{\lb-1}\ln^{\nu
p}\frac{A}{|\sg-s_0|} d\sg \right)^\frac{1}{p} $$
$$= \sup\limits_{r>0}\left(2\intl_0^1t^{\lb-1}\left(\ln\frac{A}{r}+\ln\frac{1}{t}\right)^{\nu p}
dt \right)^\frac{1}{p}\ge \sup\limits_{0<r<\dl}\left(2\ln^{\nu
p}\frac{A}{r}\intl_0^1 t^{\lb-1} dt \right)^\frac{1}{p}= \infty .
$$

\end{proof}
\begin{remark}\label{rem}
Statements similar to Lemmas \ref{replacelem0} and \ref{replacelem3} hold also for Morrey spaces
over bounded sets $\Om$ in $\mathbb{R}^n$: \\
1. \ \textit{ The power function $|x-x_0|^\gm$, where $x_0\in\Om$ belongs to the Morrey space
$\mathcal{L}^{p,\lb}(\Om), 1\le p<\infty, \  0<\lb <n$, if and only if $ \gm \ge \frac{\lb-n}{p}. $
In the case $x_0\in\partial \Om$, the condition $ \gm \ge \frac{\lb-n}{p}$ remains sufficient; it
is also necessary if the point  $x_0$ is a regular point of
the boundary in the sense that $|\{y\in\Om : |y-x_0|<r\}|\sim cr^n$.}\\
2. \  \textit{ The function $|x-x_0|^\frac{\lb-n}{p}\ln^\nu\frac{D}{|x-x_0|}, D>\textrm{diam}\,
\Om$, where $x_0\in \Om$ or $x_0$ is a regular point of $\partial\Om$, does not belong to
$\mathcal{L}^{p,\lb}(\Om).$ }
\end{remark}

\subsection{On admissible  weight functions}

In the sequel, when studying the singular operator $S_\Gm$ along a curve $\Gm$ in weighted Morrey
space, we will deal with weights  of the form
\begin{equation}\label{weight}
\varrho(t)=\prod\limits_{k=1}^N \vi_k(|t-t_k|), \quad t_k\in \Gm.
\end{equation}
We introduce below the class of weight functions $\vi_k(x), \ x\in [0,\ell],$ admitted for our
goals.

Although the functions $\vi_k$ should be defined only on $[0,d]$, where
$d=\textrm{diam}\Gm=\sup_{t,\tau\in\Gm}|t-\tau|< \ell$,  everywhere below we consider them as
defined on $[0,\ell]$.

\begin{definition}\label{def0}\\
1) \   By $W$ we denote the class of continuous and
positive functions  $\vi(x)$ on $(0,\ell]$,\\
2) \ by $W_0$ we denote the class of functions  $\vi\in W$ such that $\lim\limits_{x\to 0}\vi(x)=0$
and $\vi(x)$ is almost increasing;\\
 3) \ by $\widetilde{W}$ we denote the class of functions $\vi\in W$ such $x^\al\vi(x)\in W_0$ for some
 $\al =\al(\vi)>0$.
\end{definition}
\begin{definition}\label{def}
Let $x,y\in (0,\ell]$ and $  x_+ = \max(x,y),  \ x_- = \min(x,y).$
 By $\mathbf{V}_{\pm\pm} $ we denote the classes of functions $\vi\in W$ defined by the following
conditions
\begin{equation}\label{4}
\mathbf{V_{++}} : \hspace{37mm}  \left|\frac{\vi(x)-\vi(y)}{x-y}\right| \le C \frac{\vi(x_+)}{x_+}
, \hspace{50mm}
\end{equation}

\begin{equation}\label{5}
\mathbf{V_{--}} : \hspace{37mm}  \left|\frac{\vi(x)-\vi(y)}{x-y}\right| \le C \frac{\vi(x_-)}{x_-}
, \hspace{50mm}
\end{equation}
\begin{equation}\label{5a}
\mathbf{V_{+-}} : \hspace{37mm}  \left|\frac{\vi(x)-\vi(y)}{x-y}\right| \le C \frac{\vi(x_+)}{x_-}
, \hspace{50mm}
\end{equation}
\end{definition}
\begin{equation}\label{5b}
\mathbf{V_{-+}} : \hspace{37mm}  \left|\frac{\vi(x)-\vi(y)}{x-y}\right| \le C \frac{\vi(x_-)}{x_+}.
\hspace{50mm}
\end{equation}
Obviously, $\mathbf{V_{++}} \subset \mathbf{V_{+-}}$ and $\mathbf{V_{-+}} \subset \mathbf{V_{--}}$.

Let $0<y<x\le \ell$. It is easy to check that in the case of power function $\vi(x)=x^\al, \al
\in\rone$,
 we have
\begin{equation}\label{6}
\left|x^\al -y^\al\right| \le C (x-y)x^{\al-1} \quad \Longleftrightarrow \quad \al \ge 0
\end{equation}
\begin{equation}\label{7}
\left|x^\al -y^\al\right| \le C (x-y)y^{\al-1} \quad \Longleftrightarrow \quad \al \le 1,
\end{equation}
\begin{equation}\label{6a}
\left|x^\al -y^\al\right| \le C (x-y)\frac{x^\al}{y} \quad \Longleftrightarrow \quad \al \ge -1
\end{equation}
\begin{equation}\label{7a}
\left|x^\al -y^\al\right| \le C (x-y)\frac{y^\al}{x} \quad \Longleftrightarrow \quad \al \le 0,
\end{equation}
where the  constant  $C>0$ does not depend on $x$ and $y$.  Thus,
$$x^\al\in \mathbf{V_{++}}  \ \Longleftrightarrow \   \al\ge 0 , \ \quad \
x^\al\in \mathbf{V_{--}}  \ \Longleftrightarrow \  \ \quad \al\le 1 $$ and
$$x^\al\in \mathbf{V_{+-}}  \ \Longleftrightarrow \   \al\ge -1 , \ \quad \
x^\al\in \mathbf{V_{-+}}  \ \Longleftrightarrow \  \ \quad \al\le 0. $$

In the sequel we will mainly work with the classes $\mathbf{V_{++}}$ and $\mathbf{V_{-+}}$.

We also denote
$$W_1= \left\{\vi\in W : \ \frac{\vi(x)}{x} \ \textrm{is almost decreasing} \right\}.$$
\begin{remark}\label{rem}
Note that functions $\vi\in W_1$ satisfy the doubling condition $\vi(2x)\le C\vi(x).$
 For a function  $\vi\in W_1$,
condition \eqref{4} yields condition \eqref{5}, that is $ W_1\cap  \mathbf{V_{++}} \subseteq
W_1\cap \mathbf{V_{--}}.$
 The inverse embedding may be not true, as the above example of the power functions in \eqref{6} -\eqref{7a}
 shows.
\end{remark}

\vspace{2mm} In the following lemma we show that conditions \eqref{4} and \eqref{5} are fulfilled
automatically not only for power functions, but for an essentially larger class of functions (which
in particular may oscillate between two power functions with different exponents). Note that the
information about this class is given in terms of increasing or decreasing functions, without the
word "almost". Two statements  \textit{i)} and \textit{ii)} in Lemma \ref{lem} reflect in a sense
the modelling cases \eqref{6} and \eqref{7}.
\begin{lemma}\label{lem}
Let $\vi\in W$. Then\\
i) \ $\vi \in V_{++}$ in the case $\vi$ is increasing and the
function  $\frac{\vi(x)}{x^\nu}$ is decreasing  for some $\nu\ge 0$;\\
ii) \ $\vi \in V_{--}$ in the case $\frac{\vi(x)}{x}$ is  decreasing and  there exist a number
$\mu\ge 0$  such that $x^\mu\vi(x)$ is increasing;\\
iii) \ $\vi \in V_{-+}$ in the case $\vi(x)$ is  decreasing and  there exist a number $\mu\ge 0$
such that $x^\mu\vi(x)$ is increasing.
\end{lemma}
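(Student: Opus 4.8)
The plan is to prove each of the three parts by reducing to the defining inequalities \eqref{4}, \eqref{5}, \eqref{5b}, estimating the difference quotient $\left|\frac{\vi(x)-\vi(y)}{x-y}\right|$ for $0<y<x\le\ell$ via the mean value theorem combined with the monotonicity hypotheses. Since $\vi$ need only be continuous, I would not differentiate $\vi$ directly; instead I would factor the difference $\vi(x)-\vi(y)$ by inserting the monotone auxiliary function and using its sign to control the quotient. Throughout I set $x_+=x$ and $x_-=y$, and I may assume the relevant difference is nonzero.

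For part i), the hypotheses are that $\vi$ is increasing and $\psi(x):=\vi(x)/x^\nu$ is decreasing for some $\nu\ge 0$. The target is $\mathbf{V_{++}}$, i.e. $\left|\frac{\vi(x)-\vi(y)}{x-y}\right|\le C\frac{\vi(x)}{x}$. Since $\vi$ is increasing, $\vi(x)-\vi(y)\ge 0$, so I can drop absolute values and must bound $\frac{\vi(x)-\vi(y)}{x-y}\le C\frac{\vi(x)}{x}$. I would write $\vi(x)-\vi(y)=x^\nu\psi(x)-y^\nu\psi(y)$. Using that $\psi$ is decreasing, so $\psi(y)\ge\psi(x)$, the natural route is to bound $\vi(x)-\vi(y)\le \psi(x)(x^\nu-y^\nu)+y^\nu(\psi(x)-\psi(y))$ carefully; but since $\psi(x)-\psi(y)\le 0$ the second term is $\le 0$, giving $\vi(x)-\vi(y)\le \psi(x)(x^\nu-y^\nu)=\frac{\vi(x)}{x^\nu}(x^\nu-y^\nu)$. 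Then invoking the power-function estimate \eqref{6}, namely $x^\nu-y^\nu\le C(x-y)x^{\nu-1}$ for $\nu\ge 0$, yields $\vi(x)-\vi(y)\le C(x-y)\frac{\vi(x)}{x^\nu}x^{\nu-1}=C(x-y)\frac{\vi(x)}{x}$, which is exactly \eqref{4}.

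For part ii), I would argue symmetrically toward $\mathbf{V_{--}}$. Here $\vi(x)/x$ is decreasing and $g(x):=x^\mu\vi(x)$ is increasing for some $\mu\ge 0$; the goal is $\left|\frac{\vi(x)-\vi(y)}{x-y}\right|\le C\frac{\vi(y)}{y}$. Writing $\vi(x)-\vi(y)=x^{-\mu}g(x)-y^{-\mu}g(y)$ and using that $g$ is increasing while $x^{-\mu}$ is decreasing, I expect the dominant contribution to be controlled by the decreasing factor $x^{-\mu}$ and the estimate \eqref{7a} (or \eqref{7}) for the negative power, producing the bound $C(x-y)\frac{\vi(y)}{y}$. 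Part iii), targeting $\mathbf{V_{-+}}$ with the bound $C\frac{\vi(y)}{x}$, uses the same factorization $\vi(x)=x^{-\mu}g(x)$ but now only needs $\vi$ decreasing (not $\vi(x)/x$ decreasing), and I would appeal to \eqref{7a}; the weaker hypothesis is consistent with the weaker target $\vi(x_-)/x_+$ versus $\vi(x_-)/x_-$. The main obstacle across all three parts is handling the cross term arising when one splits $\vi(x)-\vi(y)$ into a product of two factors each with its own monotonicity: one must check that the cross term has the favorable sign (so it can be discarded) rather than adding an uncontrolled positive contribution, and this sign bookkeeping — together with correctly matching which power estimate among \eqref{6}--\eqref{7a} applies — is the delicate step, while everything else reduces to the elementary power-function inequalities already established in the excerpt.
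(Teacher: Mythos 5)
Your part i) is correct and is in essence the paper's own argument: dropping the cross term of favorable sign amounts exactly to the paper's chain $\vi(x)-\vi(y)\le \vi(x)\frac{x^\nu-y^\nu}{x^\nu}\le C\vi(x)\frac{x-y}{x}$ via \eqref{6}. Your part iii) sketch can also be completed as stated: since $\vi$ is decreasing and $g(x)=x^\mu\vi(x)$ is increasing, for $0<y<x$ one has $|\vi(x)-\vi(y)|=\vi(y)-\vi(x)\le y^{-\mu}g(y)-x^{-\mu}g(y)=g(y)\left(y^{-\mu}-x^{-\mu}\right)\le C(x-y)\,g(y)\frac{y^{-\mu}}{x}=C(x-y)\frac{\vi(y)}{x}$ by \eqref{7a}, which is \eqref{5b}. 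This is actually a more direct route than the paper's, which derives both ii) and iii) by applying part i) to $g(x)=x^\mu\vi(x)$ and then using the triangle inequality.

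The genuine gap is in part ii). There $\vi$ itself need not be monotone (only $\vi(x)/x$ is assumed decreasing), so you must bound \emph{both} $\vi(x)-\vi(y)$ and $\vi(y)-\vi(x)$ from above. In the decomposition $\vi(x)-\vi(y)=x^{-\mu}\left(g(x)-g(y)\right)+g(y)\left(x^{-\mu}-y^{-\mu}\right)$ the first term is $\ge 0$ and the second is $\le 0$; discarding the favorably signed term and applying \eqref{7a} controls only the direction $\vi(y)-\vi(x)\le C(x-y)\vi(y)/x$. In the opposite direction you are left facing $x^{-\mu}(g(x)-g(y))$, and mere monotonicity of $g$ gives no quantitative control of this increment. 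Note that the mechanism you describe (increasing $g$, decreasing $x^{-\mu}$, estimate \eqref{7a}) never uses the hypothesis that $\vi(x)/x$ is decreasing; but without that hypothesis the conclusion is false: $\vi(x)=x^{2}$ satisfies ``$x^\mu\vi(x)$ increasing'' with $\mu=0$, yet $x^2\notin \mathbf{V_{--}}$ by \eqref{7}. The missing step is precisely where $\vi(x)/x$ decreasing must enter: either directly, via $\vi(x)\le\frac{x}{y}\vi(y)$, which gives $\vi(x)-\vi(y)\le\frac{x-y}{y}\vi(y)$; or, as the paper does, by applying part i) to $g(x)=x^\mu\vi(x)$ --- legitimate because $g$ is increasing and $g(x)/x^{\mu+1}=\vi(x)/x$ is decreasing --- which yields the quantitative bound $|g(x)-g(y)|\le C\frac{g(x)}{x}(x-y)$, i.e. $\left|\vi(x)-\frac{y^\mu}{x^\mu}\vi(y)\right|\le C\frac{\vi(x)}{x}(x-y)$, after which the triangle inequality, \eqref{6}, and $\frac{\vi(x)}{x}\le\frac{\vi(y)}{y}$ finish the proof of \eqref{5}.
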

\begin{proof} \ The case \textit{i)}. Let $0<y<x\le \ell.$  Since $\frac{\vi(x)}{x^\nu}$ is decreasing, we have
$1-\cfrac{\vi(y)}{\vi(x)}\le 1- \cfrac{y^\nu}{x^\nu}$  or $$ \vi(x)-\vi(y) \le
\vi(x)\frac{x^\nu-y^\nu}{x^\nu}\le C \vi(x)\frac{x-y}{x}$$ by \eqref{6}. Since $\vi(x)-\vi(y)\ge 0$
in the case $\mu=0$, we arrive at \eqref{4}.

\ The cases \textit{ii)} and \textit{iii)}.  Taking again $y<x$, by the case \textit{i)} we have
that \eqref{4} holds for the function $x^\mu \vi(x)$, that is,
$$\left|\vi(x)-\frac{y^\mu}{x^\mu}\vi(y)\right|\le C \frac{\vi(x)}{x}(x-y).$$
Then
$$\left|\vi(x)-\vi(y)\right|\le C  \frac{\vi(x)}{x}(x-y)+ \frac{x^\mu-y^\mu}{x^\mu}\vi(y) $$
$$\le C  \frac{\vi(x)}{x}(x-y)+  C  \frac{x-y}{x}\vi(y)$$
by \eqref{6}. In the case  \textit{ii)} we use the fact that $\frac{\vi(x)}{x}\le \frac{\vi(y)}{y}$
and arrive at \eqref{5}. In the case  \textit{iii)} we have $\vi(x)\le \vi(y)$ and arrive at
 \eqref{5b}.
\end{proof}

In the case of differentiable functions $\vi(x),x>0,$ we arrive at the following sufficient
conditions for  $\vi$ to belong to the classes $\mathbf{V}_{++}, \mathbf{V}_{-+}$.
\begin{lemma}\label{lem1}
Let $\vi\in W \cap C^1((0,\ell])$. If there exist $\ve>0$ and $\nu\ge 0$  such that
$$ 0\le \frac{\vi^\prime(x)}{\vi(x)}\le \frac{\nu}{x} \quad \textrm{for}
 \quad  \ 0<x\le \ve, $$
then $\vi\in \mathbf{V}_{++}$. \ If there exist $\ve>0$  and $\mu \ge 0$  such that
$$
-\frac{\mu}{x}\le \frac{\vi^\prime(x)}{\vi(x)}\le 0 \  \quad \textrm{for} \quad 0<x\le \ve, $$ then
$\vi\in \mathbf{V}_{++}$.
\end{lemma}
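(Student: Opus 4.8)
The plan is to recognize both assertions as the $C^1$-specializations of Lemma \ref{lem}: a one-sided bound on the logarithmic derivative $\vi'/\vi$ is nothing but a monotonicity statement for a power-weighted modification of $\vi$, and that is precisely the hypothesis format of Lemma \ref{lem}. So the proof would be a translation from $\vi'/\vi$ into monotonicity, followed by citing the matching case of Lemma \ref{lem}. Since the hypotheses are imposed only on $(0,\ve]$ while Lemma \ref{lem} asks for monotonicity on all of $(0,\ell]$, I would first localize: the inequalities \eqref{4}--\eqref{5b} must hold for every pair $x,y\in(0,\ell]$, and it suffices to treat pairs with both arguments in $(0,\ve]$. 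Indeed, on $[\ve/2,\ell]$ the function $\vi\in C^1$ is Lipschitz and bounded below by a positive constant, so every difference quotient there is dominated by a constant multiple of $\vi(x_+)/x_+$; and for a pair with $x_-<\ve/2<\ve<x_+$ the gap $x_+-x_->\ve/2$ is bounded below and the quotient is controlled by the boundary values of $\vi$.

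For the first assertion, on $(0,\ve]$ the bound $0\le\vi'/\vi$ means $(\log\vi)'\ge 0$, so $\vi$ is increasing, while $\vi'/\vi\le\nu/x$ means $\frac{d}{dx}\log\bigl(\vi(x)/x^\nu\bigr)\le 0$, so $\vi(x)/x^\nu$ is decreasing. These are exactly the hypotheses of Lemma \ref{lem} i), and its proof uses the decrease of $\vi(x)/x^\nu$ only at the two small arguments involved, so it applies verbatim and yields $\vi\in\mathbf{V}_{++}$.

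For the second assertion, on $(0,\ve]$ the bound $\vi'/\vi\le 0$ means $\vi$ is decreasing, while $-\mu/x\le\vi'/\vi$ means $\frac{d}{dx}\log\bigl(x^\mu\vi(x)\bigr)\ge 0$, so $x^\mu\vi(x)$ is increasing. These are precisely the hypotheses of Lemma \ref{lem} iii), which delivers $\vi\in\mathbf{V}_{-+}$.

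The step I expect to be the real sticking point is not analytic but concerns the target class. The hypotheses of the second assertion produce, through Lemma \ref{lem} iii), membership in $\mathbf{V}_{-+}$ rather than in the class $\mathbf{V}_{++}$ printed in the statement, and this cannot be repaired. The model case already rules $\mathbf{V}_{++}$ out: $\vi(x)=x^{-\al}$ with $\al>0$ satisfies $\vi'/\vi=-\al/x$, hence $-\mu/x\le\vi'/\vi\le 0$ with $\mu=\al$, yet by the power-function criterion recorded after \eqref{7a} one has $x^{-\al}\in\mathbf{V}_{++}$ only when $\al\le 0$. The same phenomenon surfaces in the localization above: in the gap regime $\vi(x_-)$ is the large value (as $\vi$ is decreasing), so the difference quotient is comparable to $\vi(x_-)/x_+$ and not to $\vi(x_+)/x_+$, again pointing to $\mathbf{V}_{-+}$. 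I therefore read the stated conclusion $\vi\in\mathbf{V}_{++}$ of the second assertion as a misprint for $\vi\in\mathbf{V}_{-+}$; with that correction the reduction above is a complete proof.
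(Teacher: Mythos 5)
Your proof is correct and essentially the same as the paper's: the paper likewise observes that since $\min_{x\in[\ve,\ell]}\vi(x)>0$ and $\vi$ is $C^1$ away from the origin, the inequalities \eqref{4}--\eqref{5b} hold automatically for arguments off $(0,\ve]$, and then reduces to Lemma \ref{lem} via the equivalences $\frac{\vi^\prime(x)}{\vi(x)}\le\frac{\nu}{x}\Longleftrightarrow\left[\frac{\vi(x)}{x^\nu}\right]^\prime\le 0$ and $\frac{\vi^\prime(x)}{\vi(x)}\ge-\frac{\mu}{x}\Longleftrightarrow\left[x^\mu\vi(x)\right]^\prime\ge 0$, exactly your translation (with your localization merely spelling out what the paper asserts in one sentence). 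Your diagnosis that the second conclusion is a misprint for $\vi\in\mathbf{V}_{-+}$ is also right: the sentence introducing the lemma announces sufficient conditions for the classes $\mathbf{V}_{++},\mathbf{V}_{-+}$, and the reduction lands in case \textit{iii)} of Lemma \ref{lem}, which yields $\mathbf{V}_{-+}$, as your counterexample $x^{-\al}$, $\al>0$, confirms.
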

\begin{proof}
Since $\min_{x\in [\ve,\ell]}\vi(x)>0$, by the differentiability of $\vi(x)$ beyond the origin,
inequalities \eqref{4}, \eqref{5} hold automatically when $x\ge \ve$, so the conditions of Lemma
\ref{lem}  should  be checked only on $(0,\ve]$. It suffices to  note that
$\frac{\vi^\prime(x)}{\vi(x)}\le \frac{\nu}{x} \Longleftrightarrow
\left[\frac{\vi(x)}{x^\nu}\right]^\prime\le 0$ and $\frac{\vi^\prime(x)}{\vi(x)}\ge -\frac{\mu}{x}
\Longleftrightarrow [x^\mu \vi(x)]^\prime\ge 0$.
\end{proof}

\begin{example}\label{ex}
Let $\al,\bt\in \rone$ and $A>\ell$. Then
$$x^\al\left(\ln \frac{A}{x}\right)^\bt \in \left\{\begin{array}{ll}
\mathbf{V}_{++}, & \textrm{if} \ \al >0, \bt\in \rone \ \textrm{or}\ \al=0 \ \textrm{and} \ \bt \le
0\\  \mathbf{V}_{-+}, & \textrm{if} \ \al <0, \bt\in \rone \ \textrm{or}\ \al=0 \ \textrm{and} \
\bt \ge 0. \end{array}\right.$$
\end{example}

\subsection{Matuszewska-Orlicz type indices}

It is known that the property of a function to be almost increasing or almost decreasing after the
multiplication (division) by a power function is closely related to the notion of the so called
Matuszewska-Orlicz indices. We refer to \cite{270a}, \cite{342}, \cite{382a} (p.20), \cite{382b},
\cite{387z}, \cite{539}, \cite{539d} for the properties of the indices of such a type. For a
function $\vi\in W_0$, the numbers
$$m(\vi)=\sup_{0<x<1}\frac{\ln\left(\limsup\limits_{h\to 0}
\frac{\vi(hx)}{\vi(h)}\right)}{\ln x}= \lim_{x\to 0}\frac{\ln\left(\limsup\limits_{h\to 0}
\frac{\vi(hx)}{\vi(h)}\right)}{\ln x}$$ and
$$M(\vi)=\sup_{x>1}\frac{\ln\left(\limsup\limits_{h\to 0}
\frac{\vi(hx)}{\vi(h)}\right)}{\ln x}=\lim_{x\to \infty}\frac{\ln\left(\limsup\limits_{h\to 0}
\frac{\vi(hx)}{\vi(h)}\right)}{\ln x}$$  are known as \textit{the Matuszewska-Orlicz type lower and
upper indices} of the function $\vi(r)$. Note that in this definition $\vi(x)$ need not to be an
$N$-function: only its behaviour at the origin is of importance.    Observe that
$$0\le m(\vi)\le
M(\vi) \le \infty \ \ \textrm{for}\ \  \vi\in W_0.$$ It is obvious that
\begin{equation}\label{na013}
m[x^\lb \vi(x)]=\lb + m(\vi),  \quad M[x^\lb \vi(x)]=\lb + M(\vi), \quad \quad \lb\in \mathbb{R}^1.
\end{equation}
 Consequently,
$$-\infty <m(\vi)\le M(\vi)\le \infty \quad \textrm{for} \quad \vi\in
\widetilde{W}.$$
\begin{definition}\label{def3}
We say that a function  $\vi\in W_0$ belongs to the Zygmund class $\mathbb{Z}^\bt$, $\bt\in
\mathbb{R}^1$, if
$$\int_0^h\frac{\vi(x)}{x^{1+\bt}}dx \le c\frac{\vi(h)}{h^\bt},$$
and to the Zygmund class $\mathbb{Z}_\gm$, $\gm\in \mathbb{R}^1$, if
$$\int_h^\ell\frac{\vi(x)}{x^{1+\gm}}dx \le c\frac{\vi(h)}{h^\gm}.$$
We also denote
$$\mathbb{Z}^\bt_\gm : =\mathbb{Z}^\bt\bigcap \mathbb{Z}_\gm ,$$
the latter class being also known as Bary-Stechkin-Zygmund class \cite{46}.
\end{definition}
 The following statement is
known, see \cite{270a}, Theorems 3.1 and 3.2.

\begin{theorem}\label{tstasda3zl} Let $\vi\in \widetilde{W}$ and $\bt,\gm \in \mathbb{R}^1$.
Then
$$\vi \in \mathbb{Z}^\bt  \Longleftrightarrow  m(\vi)>\bt \quad \textrm{and} \quad
 \vi \in \mathbb{Z}_\gm   \Longleftrightarrow  M(\vi)<\gm.
 $$
Besides this
\begin{equation}\label{na01}
m(\vi) =\sup\left\{\dl> 0: \  \ \frac{\vi(x)}{x^{\dl}} \ \ \ \ \textrm{is almost increasing}
\right\},
\end{equation}
and
\begin{equation}\label{na0}
M(\vi) =\inf\left\{\lb> 0: \  \ \frac{\vi(x)}{x^{\lb}} \ \ \ \ \textrm{is almost decreasing}
\right\}.
\end{equation}
\end{theorem}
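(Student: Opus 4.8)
The plan is to reduce everything to the case $\vi\in W_0$ and then exploit the submultiplicative structure of the dilation function together with the almost-monotonicity descriptions \eqref{na01}, \eqref{na0}, which are the real engine of the statement.

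First I would record the reduction to $W_0$. Since $\vi\in\wt W$, there is $\al>0$ with $\psi(x):=x^\al\vi(x)\in W_0$. By \eqref{na013} one has $m(\psi)=\al+m(\vi)$ and $M(\psi)=\al+M(\vi)$, and a direct change of the exponent in the defining integrals shows $\vi\in\mathbb{Z}^\bt\iff\psi\in\mathbb{Z}^{\bt+\al}$ and $\vi\in\mathbb{Z}_\gm\iff\psi\in\mathbb{Z}_{\gm+\al}$. Hence both the Zygmund equivalences and the index formulas for $\vi\in\wt W$ follow once they are established for functions in $W_0$, so from now on I assume $\vi\in W_0$. Next I would set up the submultiplicative machinery. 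Put $\Phi(x)=\limsup_{h\to0}\vi(hx)/\vi(h)$; writing $\vi(hxy)/\vi(h)$ as the product $[\vi(hxy)/\vi(hx)]\cdot[\vi(hx)/\vi(h)]$ and using $\limsup(ab)\le(\limsup a)(\limsup b)$ for nonnegative factors gives $\Phi(xy)\le\Phi(x)\Phi(y)$. Consequently $p(t):=\ln\Phi(e^t)$ is subadditive with $p(0)=0$, and the classical behaviour of subadditive functions yields the existence of the limits in the definitions of $m(\vi)$ and $M(\vi)$ and their coincidence with the corresponding extrema; this is exactly the content of the two equalities displayed in the definition.

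Then come the almost-monotonicity characterizations \eqref{na01}, \eqref{na0}, which I regard as the core. For the easy inclusion, if $\vi(x)/x^\dl$ is almost increasing then for $0<x<1$ one gets $\vi(hx)/\vi(h)\le Cx^\dl$, whence $\Phi(x)\le Cx^\dl$ and, dividing $\ln\Phi(x)$ by the negative number $\ln x$ and letting $x\to0$, $m(\vi)\ge\dl$; this shows $m(\vi)\ge\sup\{\dl:\vi/x^\dl\ \text{almost increasing}\}$, and symmetrically $M(\vi)\le\inf\{\lb:\vi/x^\lb\ \text{almost decreasing}\}$. The opposite inequalities are the technical heart: given $\dl<m(\vi)$ I would use the limit formula to obtain $\Phi(x)\le x^\dl$ for all sufficiently small $x$, and then promote this asymptotic bound on the dilation function to a genuine almost-monotonicity of $\vi(x)/x^\dl$ on all of $(0,\ell]$, splitting into the ranges of small and comparable arguments and absorbing the bounded part $[\ve,\ell]$ into the constant. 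Controlling the uniformity here, i.e.\ passing from the $\limsup_{h\to0}$ information back to an estimate valid for all pairs $x\le y$, is the step I expect to require the most care.

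Finally I would deduce the Zygmund equivalences from the monotonicity descriptions. If $m(\vi)>\bt$, choose $\dl$ with $\bt<\dl<m(\vi)$; then $\vi(x)/x^\dl$ is almost increasing, so $\vi(x)\le C\vi(h)(x/h)^\dl$ for $x\le h$, and inserting this into $\int_0^h\vi(x)x^{-1-\bt}\,dx$ gives the bound $\tfrac{C}{\dl-\bt}\vi(h)h^{-\bt}$, i.e.\ $\vi\in\mathbb{Z}^\bt$. The converse implication $\vi\in\mathbb{Z}^\bt\Rightarrow m(\vi)>\bt$ is obtained by extracting pointwise growth control from the integral bound (using that membership in $\mathbb{Z}^\bt$ forces $\vi(x)/x^{\bt'}$ to be almost increasing for some $\bt'>\bt$), after which \eqref{na01} yields $m(\vi)\ge\bt'>\bt$. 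The pair of equivalences for $\mathbb{Z}_\gm$ and $M(\vi)<\gm$ is entirely parallel, with the roles of increasing and decreasing interchanged and the integral $\int_h^\ell$ in place of $\int_0^h$.
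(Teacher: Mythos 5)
You should first be aware of what the paper itself does with this statement: it does not prove it. The core result (for $\vi\in W_0$ and $\bt\ge 0$, $\gm>0$) is quoted from \cite{270a}, Theorems 3.1 and 3.2, and the paper's only argument is Remark \ref{rem2}, namely the extension to negative exponents and to $\vi\in\wt{W}$ via the shift formulas \eqref{na013} --- which is exactly your first step (your observation that $\vi\in\mathbb{Z}^\bt\Longleftrightarrow x^\al\vi\in\mathbb{Z}^{\bt+\al}$ is correct and is the whole content of that remark). Your step with submultiplicativity of $\Phi(x)=\limsup_{h\to 0}\vi(hx)/\vi(h)$ and a Fekete-type lemma is also the standard, correct way to get the $\sup=\lim$ equalities. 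The substance of your proposal is therefore the attempt to reprove the cited core, and there it has two genuine gaps, located exactly at the two hardest implications. The first is the hard half of \eqref{na01}. Your plan --- ``obtain $\Phi(x)\le x^\dl$ for all sufficiently small $x$, then promote this asymptotic bound to almost-monotonicity of $\vi(x)/x^\dl$'' --- fails as stated: for each fixed ratio $x$ the $\limsup$ only yields a threshold $h_0(x)$ such that $\vi(hx)\le 2\Phi(x)\vi(h)$ for $h\le h_0(x)$, and nothing controls $h_0(x)$ uniformly as $x$ runs over $(0,1)$; splitting off a subinterval $[\ve,\ell]$ is irrelevant, because the missing uniformity is in the ratio, not in the location of the points. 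The argument that works uses the bound at a \emph{single} ratio: since $m(\vi)=\sup_{0<x<1}\ln\Phi(x)/\ln x>\dl$, pick one $x_0\in(0,1)$ and $\theta$ with $\Phi(x_0)\le\theta<x_0^\dl$; then there is one $h_0$ with $\vi(hx_0)\le\theta\vi(h)$ for all $h\le h_0$, iteration gives $\vi(hx_0^n)\le\theta^n\vi(h)$, and for arbitrary $x\le y\le h_0$ one writes $x=yx_0^n s$, $s\in(x_0,1]$, filling the gap between consecutive scales by the almost-increase of $\vi$ itself (free, since $\vi\in W_0$). This fixed-ratio iteration is the missing mechanism; without it, the ``promotion'' is a restatement of what must be proved.

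The second gap is the converse Zygmund implication. Your proof of $\vi\in\mathbb{Z}^\bt\Rightarrow m(\vi)>\bt$ rests entirely on the parenthetical claim that membership in $\mathbb{Z}^\bt$ forces $\vi(x)/x^{\bt'}$ to be almost increasing for some $\bt'>\bt$. That claim is precisely the Bary--Stechkin self-improvement theorem (\cite{46}, and Theorems 3.1--3.2 of \cite{270a}): it is the deepest single ingredient of the whole circle of statements, it cannot be deduced from \eqref{na01} (so your plan genuinely needs it as an independent lemma), and you do not prove it. Nor is it a routine ``extraction of pointwise control'': setting $F(h)=\intl_0^h\vi(t)t^{-1-\bt}dt$, the hypothesis reads $F(h)\le c\,hF'(h)$, integrating this differential inequality shows that $F(h)h^{-1/c}$ is non-decreasing, and one then converts this into almost-increase of $\vi(x)/x^{\bt+1/c}$ by comparing $\vi(h)h^{-\bt}$ with $F$ from both sides, using again that $\vi$ is almost increasing. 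A naive attempt to read a bound $\Phi(x_0)<x_0^\bt$ directly off the integral inequality fails, because the Zygmund constant $c$ need not be small --- this is exactly why the power gain $1/c$ has to be manufactured by the differential inequality. Note finally that the standard organization (essentially that of \cite{46}, \cite{270a}) is a cycle: (i) $\vi/x^\dl$ almost increasing $\Rightarrow m(\vi)\ge\dl$ (your easy direction); (ii) $m(\vi)>\bt\Rightarrow\vi\in\mathbb{Z}^\bt$, proved by the fixed-ratio iteration applied to the decomposition $\intl_0^h=\sum_{n}\int_{hx_0^{n+1}}^{hx_0^n}$, which needs no uniformity at all; (iii) $\vi\in\mathbb{Z}^\bt\Rightarrow\vi/x^{\bt+\ve}$ almost increasing for some $\ve>0$ (Bary--Stechkin). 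These three statements yield both equivalences and both index formulas \eqref{na01}, \eqref{na0} simultaneously, whereas your ordering --- \eqref{na01} first, Zygmund afterwards --- forces you to confront the uniformity problem head-on and still leaves lemma (iii) unproved.
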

\begin{remark}\label{rem2}
Theorem \ref{tstasda3zl} was formulated in \cite{270a} for $\bt\ge 0,\gm> 0$  and $\vi\in W_0$. It
is evidently true when these exponents are negative, and for $\vi\in \widetilde{W}$, the latter
being obvious by the definition of the class $\widetilde{W}$ and formulas \eqref{na013}.
\end{remark}

\section{Weighted Hardy operators in Morrey spaces}\label{Hardy}

\setcounter{equation}{0}\setcounter{theorem}{0}

\subsection{The case of power weights}\label{power}

Let
\begin{equation}\label{confus}
H_\bt f(x)= x^{\bt-1}\intl_0^x \frac{f(t)dt}{t^\bt},  \quad
 \mathcal{H}_\bt f(x) = x^\bt\intl_x^\ell \frac{f(t)dt}{t^{\bt+1}}.
 \end{equation}

\begin{theorem}\label{th1}
Let $0<\ell\le \infty$. The operators $H_\bt$ and $\mathcal{H}_\bt$ are bounded in the Morrey space
$\mathcal{L}^{p,\lb}([0,\ell])$, $\ 1\le p<\infty, 0\le \lb <1$, if and only if
\begin{equation}\label{13}
\bt< \frac{\lb}{p}+\frac{1}{p^\prime} \quad \textrm{and} \quad \bt > \frac{\lb}{p}-\frac{1}{p},
\end{equation}
respectively.
\end{theorem}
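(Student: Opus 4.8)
The plan is to exploit the exact scaling structure of the two operators. The change of variable $t=sx$ turns them into superpositions of dilations: $H_\bt f(x)=\int_0^1 s^{-\bt}f(sx)\,ds$ and $\mathcal{H}_\bt f(x)=\int_1^\infty s^{-\bt-1}f(sx)\,ds$, where for finite $\ell$ one reads $f(sx)$ as the zero-extension of $f$ off $[0,\ell]$, so that the $x$-dependent upper limit $\ell/x$ in $\mathcal{H}_\bt$ is absorbed automatically. Since for each fixed ball the expression $\left(r^{-\lb}\int_{B}|g|^p\right)^{1/p}$ is a weighted $L^p$-norm, Minkowski's integral inequality applied ball-by-ball and then followed by the supremum over $(x_0,r)$ yields $\|H_\bt f\|_{p,\lb}\le\int_0^1 s^{-\bt}\|f(s\cdot)\|_{p,\lb}\,ds$ and the analogous bound for $\mathcal{H}_\bt$. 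Thus everything reduces to controlling the norm of a single dilation $f(s\cdot)$.

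The key lemma I would isolate is the dilation estimate $\|f(s\cdot)\|_{p,\lb}\le s^{\frac{\lb-1}{p}}\|f\|_{p,\lb}$, valid for every $s>0$. On $[0,\infty)$ this is an exact identity: the substitution $y=sx$ sends $B(x_0,r)$ to $B(sx_0,sr)$ and, after combining $r^{-\lb}$ with the Jacobian $s^{-1}$, produces the factor $s^{\lb-1}$. For finite $\ell$ the same computation goes through once one checks that a ball $B(y_0,\rho)$ whose centre has been pushed to $y_0\ge\ell$ still satisfies $B(y_0,\rho)\cap[0,\ell]\subseteq B(\ell,\rho)\cap[0,\ell]$, so no admissible mass is lost in the supremum. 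Granting this, sufficiency is immediate: $\|H_\bt f\|_{p,\lb}\le\|f\|_{p,\lb}\int_0^1 s^{-\bt+\frac{\lb-1}{p}}\,ds$, finite exactly when $-\bt+\frac{\lb-1}{p}>-1$, i.e. $\bt<\frac{\lb}{p}+\frac{1}{p'}$; and $\|\mathcal{H}_\bt f\|_{p,\lb}\le\|f\|_{p,\lb}\int_1^\infty s^{-\bt-1+\frac{\lb-1}{p}}\,ds$, finite exactly when $-\bt-1+\frac{\lb-1}{p}<-1$, i.e. $\bt>\frac{\lb}{p}-\frac{1}{p}$.

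For necessity I would test on power functions, which behave as (near-)eigenfunctions. A direct computation gives $H_\bt(x^\gm)=\frac{1}{\gm-\bt+1}\,x^\gm$ whenever $\gm>\bt-1$, while by Lemma \ref{replacelem0} one has $x^\gm\in\mathcal{L}^{p,\lb}$ precisely when $\gm\ge\frac{\lb-1}{p}$. If $\bt\ge\frac{\lb}{p}+\frac{1}{p'}$, equivalently $\bt-1\ge\frac{\lb-1}{p}$, then letting $\gm\downarrow(\bt-1)$ keeps $x^\gm$ in the space while the eigenvalue $\frac{1}{\gm-\bt+1}$ blows up, contradicting boundedness; this forces the first inequality in \eqref{13}. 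For $\mathcal{H}_\bt$ I would compute on $[0,\ell]$: for $\gm>\bt$, $\mathcal{H}_\bt(x^\gm)=\frac{\ell^{\gm-\bt}}{\gm-\bt}\,x^\bt-\frac{1}{\gm-\bt}\,x^\gm$, whose behaviour near $0$ is that of $x^\bt$; choosing $\gm=\frac{\lb-1}{p}$ when $\bt<\frac{\lb-1}{p}$ shows $\mathcal{H}_\bt$ maps a Morrey function to one behaving like $x^\bt\notin\mathcal{L}^{p,\lb}$. At the borderline $\bt=\frac{\lb-1}{p}$ the same test with $\gm=\bt$ gives $\mathcal{H}_\bt(x^\bt)=x^{\frac{\lb-1}{p}}\ln\frac{\ell}{x}$, which by Lemma \ref{replacelem3} is not in $\mathcal{L}^{p,\lb}$; this yields the second inequality in \eqref{13}. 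The case $\ell=\infty$ reduces to finite $\ell$, since zero-extension commutes with both operators on $(0,\ell)$ and keeps the Morrey norm comparable, so boundedness on $(0,\infty)$ would entail boundedness on every $(0,\ell)$.

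I expect the main obstacle to be the finite-interval version of the dilation estimate: because $[0,\ell]$ is not scale-invariant, the substitution $y=sx$ moves ball centres outside $[0,\ell]$, and one must verify that the Morrey supremum is still dominated—this is exactly where the containment $B(y_0,\rho)\cap[0,\ell]\subseteq B(\ell,\rho)\cap[0,\ell]$ for $y_0\ge\ell$ enters. A secondary delicate point is the \emph{strictness} of the inequalities in \eqref{13}, which is pinned down by the logarithmic borderline function of Lemma \ref{replacelem3} rather than by pure powers alone.
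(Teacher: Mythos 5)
Your proposal is correct, and its sufficiency half is essentially the paper's own proof: the paper also rewrites $H_\beta f(x)=\int_0^1 t^{-\beta}f(xt)\,dt$ and $\mathcal{H}_\beta f(x)\le\int_1^\infty t^{-\beta-1}f(xt)\,dt$ (zero extension), applies Minkowski's integral inequality ball-by-ball, and then uses $\chi_{B(x,r)}(y/t)=\chi_{B(tx,tr)}(y)$ with renormalization by $(tr)^{\lambda/p}$ --- which is precisely your dilation estimate $\|f(t\cdot)\|_{p,\lambda}\le t^{(\lambda-1)/p}\|f\|_{p,\lambda}$ --- arriving at the same constants $\bigl(\tfrac{\lambda}{p}+\tfrac{1}{p^\prime}-\beta\bigr)^{-1}$ and $\bigl(\beta+\tfrac{1-\lambda}{p}\bigr)^{-1}$; your containment check $B(y_0,\rho)\cap[0,\ell]\subseteq B(\ell,\rho)\cap[0,\ell]$ for $y_0\ge\ell$ is the justification the paper silently omits when it passes the supremum over dilated balls back to the full Morrey supremum. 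Where you genuinely differ is the necessity for $H_\beta$: the paper tests the single function $f(x)=x^{(\lambda-1)/p}$ and argues that $H_\beta f$ simply fails to exist (the integral diverges) when $\beta\ge\tfrac{\lambda}{p}+\tfrac{1}{p^\prime}$, whereas you run an eigenvalue blow-up along the family $x^\gamma$, $\gamma\downarrow\beta-1$, with $H_\beta x^\gamma=\tfrac{1}{\gamma-\beta+1}x^\gamma$. Your version is arguably cleaner, since it never has to interpret ``bounded'' as including ``well defined''; the paper's is shorter. For $\mathcal{H}_\beta$ the two necessity arguments coincide: your choice $\gamma=\tfrac{\lambda-1}{p}$ is exactly the paper's test function, giving an output $\sim cx^\beta$ below the borderline and the logarithmic function of Lemma \ref{replacelem3} at the borderline. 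One caveat: your assertion that $x^\gamma\in\mathcal{L}^{p,\lambda}$ ``precisely when $\gamma\ge\tfrac{\lambda-1}{p}$'' holds only for $\lambda>0$; when $\lambda=0$ the criterion is the strict inequality $\gamma>-\tfrac1p$ (Remark \ref{replacerem1}), so your borderline test $\gamma=\beta=-\tfrac1p$ for $\mathcal{H}_\beta$ is not in $L^p$ and that case requires a different (classical) argument --- exactly the case the paper dispatches by citing it as well known; your $H_\beta$ argument, by contrast, survives at $\lambda=0$ unchanged. Finally, your reduction of $\ell=\infty$ to finite $\ell$ via zero extension for the necessity is sound, and is a point the paper does not spell out.
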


\begin{proof}\\
\textit{"If" part}.
We may assume that $f(x)\ge 0$. First we observe that
$$H_\bt f(x)= \intl_0^1 \frac{f(xt)dt}{t^\bt}  \quad \textrm{and} \quad
 \mathcal{H}_\bt f(x) = \intl_1^\frac{\ell}{x} \frac{f(xt)dt}{t^{\bt+1}} \le \intl_1^\infty
 \frac{f(xt)dt}{t^{\bt+1}}$$
under the assumption that $f(x)$ is continued as $f(x)\equiv 0$ for $x>\ell$ in the inequality for
$\mathcal{H}_\bt f(x)$. For $H_\bt f$ we have
 $$  \left\|H_\bt f\right\|_{p,\lb}=
\sup\limits_{x,r}\left\|\frac{\chi_{B(x,r)}(y)}{r^\lb} H_\bt f(y)\right\|_p= \sup\limits_{x,r}
\left\{\frac{1}{r^\lb}\intl_0^\infty \left|\chi_{B(x,r)}(y)\intl_0^1\frac{f(yt)}{t^\bt}dt\right|^p
dy\right\}^\frac{1}{p}.$$ Then by Minkowsky inequality we obtain
$$ \left\|H_\bt f\right\|_{p,\lb} \le \sup\limits_{x,r}\intl_0^1 \frac{dt}{t^\bt}
\left\{\intl_0^\infty \left|\frac{\chi_{B(x,r)}(y)}{r^\lb} f(yt)\right|^p dy\right\}^\frac{1}{p}.$$
Hence, by the change of variables we get
$$ \left\|H_\bt f\right\|_{p,\lb} \le \sup\limits_{x,r}\intl_0^1 \frac{dt}{t^{\bt+\frac{1}{p}}}
\left\{\intl_0^\infty \left|\frac{\chi_{B(x,r)}\left(\frac{y}{t}\right)}{r^\frac{\lb}{p}}
f(y)\right|^p dy\right\}^\frac{1}{p}.$$ It is easy to see that
$$\chi_{B(x,r)}\left(\frac{y}{t}\right) = \chi_{B(tx,tr)}(y).$$
Therefore,
$$ \left\|H_\bt f\right\|_{p,\lb} \le
\sup\limits_{x,r}\intl_0^1 \frac{dt}{t^{\bt+\frac{1}{p}}} \left\{\intl_0^\infty
\left|\frac{\chi_{B(tx,tr)}(y)}{r^\frac{\lb}{p}} f(y)\right|^p dy\right\}^\frac{1}{p}$$
$$=\sup\limits_{x,r}\intl_0^1 \frac{dt}{t^{\bt+\frac{1-\lb}{p}}} \left\{\intl_0^\infty
\left|\frac{\chi_{B(tx,tr)}(y)}{(tr)^\frac{\lb}{p}} f(y)\right|^p dy\right\}^\frac{1}{p} $$ $$  \le
\intl_0^1 \frac{dt}{t^{\bt+\frac{1-\lb}{p}}} \sup\limits_{x,r}\left\{\intl_0^\infty
\left|\frac{\chi_{B(x,r)}(y)}{r^\lb} f(y)\right|^p dy\right\}^\frac{1}{p} =\frac{1}{\frac{\lb}{p}
+\frac{1}{p^\prime}-\bt}\left\|f\right\|_{p,\lb}$$

\vspace{2mm}

Similarly for the operator $\mathcal{H}_\bt$  we obtain
 $$  \left\|\mathcal{H}_\bt f\right\|_{p,\lb}=
\sup\limits_{x,r}\left\|\frac{\chi_{B(x,r)}(y)}{r^\frac{\lb}{p}} \mathcal{H}_\bt f(y)\right\|_p \le
\sup\limits_{x,r}\intl_1^\infty \frac{dt}{t^{1+\bt}} \left\{\intl_0^\infty
\left|\frac{\chi_{B(x,r)}(y)}{r^\frac{\lb}{p}} f(yt)\right|^p dy\right\}^\frac{1}{p}$$
$$ =  \sup\limits_{x,r}\intl_1^\infty \frac{dt}{t^{1+\bt+\frac{1}{p}}}
\left\{\intl_0^\infty \left|\frac{\chi_{B(x,r)}\left(\frac{y}{t}\right)}{r^\frac{\lb}{p}}
f(y)\right|^p dy\right\}^\frac{1}{p}$$
$$  =
\sup\limits_{x,r}\intl_1^\infty \frac{dt}{t^{1+\bt+\frac{1}{p}}} \left\{\intl_1^\infty
\left|\frac{\chi_{B(tx,tr)}(y)}{r^\frac{\lb}{p}} f(y)\right|^p dy\right\}^\frac{1}{p}
=\sup\limits_{x,r}\intl_1^\infty \frac{dt}{t^{1+\bt+\frac{1}{p}-\frac{\lb}{p}}}
\left\{\intl_0^\infty \left|\frac{\chi_{B(tx,tr)}(y)}{(tr)^\frac{\lb}{p}} f(y)\right|^p
dy\right\}^\frac{1}{p} $$ $$  \le \intl_1^\infty \frac{dt}{t^{1+\bt+\frac{1-\lb}{p}}}
\sup\limits_{x,r}\left\{\intl_0^\infty \left|\frac{\chi_{B(x,r)}(y)}{r^\frac{\lb}{p}} f(y)\right|^p
dy\right\}^\frac{1}{p} =\frac{1}{\bt+\frac{1-\lb}{p}}\left\|f\right\|_{p,\lb}$$

\vspace{2mm} \textit{"Only if" part}. The necessity of the condition $\bt<
\frac{\lb}{p}+\frac{1}{p^\prime}$  for the operator $H_\bt$ is well known in the case $\lb=0$. Let
$\lb>0$. It suffices to observe that the function $f(x)=x^\frac{\lb-1}{p}$ belongs to
$\mathcal{L}^{p,\lb}$ by Lemma \ref{replacelem3}, but the operator $H_\bt$ on this function exists
only when $\bt< \frac{\lb}{p}+\frac{1}{p^\prime}$.

With the same example $f(x)=x^\frac{\lb-1}{p}$ the case of the  operator $\mathcal{H}_\bt$ may be
similarly considered.

\end{proof}

\begin{corollary}\label{cor2}
$\|H_\bt\|_{\mathcal{L}^{p,\lb}}\le \frac{1}{\frac{\lb}{p} +\frac{1}{p^\prime}-\bt}, $ \
$\|H_\bt\|_{\mathcal{L}^{p,\lb}}\le  \frac{1}{\bt+ \frac{1-\lb}{p}}.$
\end{corollary}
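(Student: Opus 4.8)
The plan is to observe that the two displayed bounds are nothing other than the explicit constants already produced at the end of the \emph{"If"} part of the proof of Theorem \ref{th1}; the Corollary therefore requires no new argument, only the bookkeeping of reading off the value of the $t$-integral that was separated out there. So I would simply retrace that chain of inequalities and record the constant realised at its final step.

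Concretely, for $H_\bt$ I would start from the representation $H_\bt f(x)=\intl_0^1 t^{-\bt}f(xt)\,dt$, apply Minkowski's integral inequality to move the $t$-integration outside the Morrey norm, and then use the dilation identity $\chi_{B(x,r)}(y/t)=\chi_{B(tx,tr)}(y)$ together with the change of variable $y\mapsto y/t$. Absorbing the factor $(tr)^{\lb/p}$ into the norm leaves each slice contributing $\|f\|_{p,\lb}$ multiplied by $t^{-\bt-\frac{1-\lb}{p}}$, and since $\intl_0^1 t^{-\bt-\frac{1-\lb}{p}}\,dt=\frac{1}{\frac{\lb}{p}+\frac{1}{p^\prime}-\bt}$ whenever $\bt<\frac{\lb}{p}+\frac{1}{p^\prime}$, the first estimate follows at once. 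For $\mathcal{H}_\bt$ the identical steps apply, starting instead from $\mathcal{H}_\bt f(x)\le \intl_1^\infty t^{-1-\bt}f(xt)\,dt$; the only change is that the resulting $t$-integral runs over $(1,\infty)$ with exponent $t^{-1-\bt-\frac{1-\lb}{p}}$, which equals $\frac{1}{\bt+\frac{1-\lb}{p}}$ precisely when $\bt>\frac{\lb}{p}-\frac{1}{p}$, giving the second estimate.

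Since these two quantities are exactly the operator norms realised by the computation above --- the intermediate inequalities being the only places where constants enter, and each of them introducing no loss beyond the $t$-integral --- the Corollary is immediate from Theorem \ref{th1}. The only points requiring mild care are the legitimacy of the Minkowski interchange and the dilation-covariance of the Morrey norm in the precise form $\|f(t\,\cdot)\|_{p,\lb}=t^{-\frac{\lb}{p}}\|f\|_{p,\lb}$ used above; both were already established inside the proof of Theorem \ref{th1}, so I do not expect any genuine obstacle here --- the statement is essentially a quantitative restatement of the sufficiency half of that theorem.
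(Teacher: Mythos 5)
Your proposal is correct and takes exactly the paper's route: Corollary \ref{cor2} is simply the pair of constants $\intl_0^1 t^{-\bt-\frac{1-\lb}{p}}\,dt=\frac{1}{\frac{\lb}{p}+\frac{1}{p^\prime}-\bt}$ and $\intl_1^\infty t^{-1-\bt-\frac{1-\lb}{p}}\,dt=\frac{1}{\bt+\frac{1-\lb}{p}}$ read off from the final lines of the \emph{``If''} part of the proof of Theorem \ref{th1}, and the paper supplies no additional argument. One cosmetic slip: in your closing remark the dilation law should read $\|f(t\,\cdot)\|_{p,\lb}\le t^{\frac{\lb-1}{p}}\|f\|_{p,\lb}$ rather than $t^{-\frac{\lb}{p}}\|f\|_{p,\lb}$, but since your actual computation combines the change-of-variables factor $t^{-1/p}$ with the radius renormalization $t^{\lb/p}$ into the correct net exponent $t^{-\bt-\frac{1-\lb}{p}}$, nothing in the argument is affected.
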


\vspace{3mm} Observe that Theorem \ref{th1} is a particular case of the following statement for the
operators of the form
$$Af(x)= \intl_0^\infty a(t)f(xt)dt= \frac{1}{x}\intl_0^\infty a\left(\frac{t}{x}\right)f(t)dt$$
with homogeneous kernel. Under the choice $a(t)=\chi_{[0,1]}(t)$ we obtain the operator $H_\bt$,
while  taking $a(t)=\chi_{[1,\infty]}(t)$ we get at the majorant of the operator $\mathcal{H}_\bt$.

\begin{theorem}\label{th2}
Let $1\le p <\infty, \ 0\le \lb <1$ and  $C= \intl_0^\infty t^{\frac{\lb-1}{p}}|a(t)|dt <\infty .$
Then
\begin{equation}\label{14}
\|Af\|_{\mathcal{L}^{p,\lb}(\mathcal{R}_+^1)} \le C \|f\|_{\mathcal{L}^{p,\lb}(\mathcal{R}_+^1)}.
\end{equation}
\end{theorem}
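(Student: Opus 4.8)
The plan is to mimic the argument already displayed for Theorem \ref{th1}, since $H_\bt$ and the majorant of $\mathcal{H}_\bt$ are exactly the operators $A$ with $a=\chi_{[0,1]}$ and $a=\chi_{[1,\infty)}$. The two structural facts that drive the proof are that $A$ has a homogeneous kernel, so that $Af(x)=\intl_0^\infty a(t)f(xt)\,dt$, and that the Morrey norm is (up to an explicit power of the dilation parameter) invariant under dilations. I would first reduce to the case $a\ge 0$, $f\ge 0$: the pointwise bound $|Af(x)|\le \intl_0^\infty |a(t)|\,|f(xt)|\,dt$ shows it is enough to estimate the operator with kernel $|a|$ acting on $|f|$, and $\big\||f|\big\|_{p,\lb}=\|f\|_{p,\lb}$.

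Starting from $\|Af\|_{p,\lb}=\sup\limits_{x,r}\big\|r^{-\lb/p}\chi_{B(x,r)}\,Af\big\|_p$ and applying Minkowski's integral inequality to move the inner $L^p$-norm past the $t$-integral, I obtain
$$\|Af\|_{p,\lb} \le \sup\limits_{x,r}\intl_0^\infty |a(t)| \left\{ \intl_0^\infty \left| \frac{\chi_{B(x,r)}(y)}{r^{\lb/p}}\, f(yt) \right|^p dy \right\}^{1/p} dt.$$
The heart of the matter is then the change of variables $y\mapsto y/t$ together with the scaling identity $\chi_{B(x,r)}(y/t)=\chi_{B(tx,tr)}(y)$ already used for Theorem \ref{th1}. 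The Jacobian contributes a factor $t^{-1/p}$, and after rewriting $r^{-\lb/p}=t^{\lb/p}(tr)^{-\lb/p}$ the braced quantity becomes
$$t^{(\lb-1)/p}\left\{ \intl_0^\infty \left| \frac{\chi_{B(tx,tr)}(y)}{(tr)^{\lb/p}}\, f(y) \right|^p dy \right\}^{1/p},$$
which is exactly a Morrey average of $f$ over the ball $B(tx,tr)$, multiplied by the homogeneity factor $t^{(\lb-1)/p}$.

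To finish, I observe that for each fixed $t$ the braced quantity is bounded by $\|f\|_{p,\lb}$ uniformly in $x,r$, since as $x,r$ range over admissible centre--radius pairs so does $(tx,tr)$. Moving the supremum inside the $t$-integral via $\sup\int\le\int\sup$ yields
$$\|Af\|_{p,\lb} \le \left(\intl_0^\infty t^{(\lb-1)/p}|a(t)|\,dt\right)\|f\|_{p,\lb}=C\,\|f\|_{p,\lb},$$
which is \eqref{14}.

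The only genuinely delicate point, and the step I would check most carefully, is the legitimacy of interchanging the operations: Minkowski's integral inequality forces the reduction to $|a|$ and $|f|$ at the outset, and the final passage moves a supremum through an integral, which is valid precisely as the inequality $\sup\int\le\int\sup$ in the direction we need. The scaling identity for indicator functions and the appearance of the exponent $(\lb-1)/p$ in $C$ are what make the dilation structure close up cleanly; the remainder is the routine bookkeeping already carried out in Theorem \ref{th1}.
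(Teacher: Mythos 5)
Your proof is correct and is essentially the paper's own argument: the paper proves Theorem \ref{th2} by stating that it "follows the same lines as for Theorem \ref{th1}", and you have carried out exactly those lines (Minkowski's integral inequality, the substitution $y\mapsto y/t$ with the identity $\chi_{B(x,r)}(y/t)=\chi_{B(tx,tr)}(y)$, the rescaling $r^{-\lb/p}=t^{\lb/p}(tr)^{-\lb/p}$, and the passage $\sup\int\le\int\sup$) for a general kernel $a(t)$ in place of $\chi_{[0,1]}(t)t^{-\bt}$.
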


The proof follows the same lines as for Theorem \ref{th1}.

\subsection{The case of general weights}\label{general}

\begin{theorem}\label{theor}
Let $\vi\in \widetilde{W}\cap \left(V_{++}\cup V_{-+}\right)$. Then the weighted  Hardy operators
$H_\vi$ and $\mathcal{H}_\vi$ are bounded in the Morrey spaces $\mathcal{L}^{p,\lb}([0,\ell]), \
1\le p<\infty, \ 0\le \lb <1,  0<\ell < \infty$,  if
\begin{equation}\label{usedli}
\vi \in \mathbb{Z}_{\frac{\lb}{p}+\frac{1}{p^\prime}} \quad \textrm{and} \quad \vi \in
\mathbb{Z}^\frac{\lb-1}{p},
\end{equation}
respectively, or equivalently,
\begin{equation}\label{used}
M(\vi)<\frac{\lb}{p}+\frac{1}{p^\prime} \quad \quad \textrm{for the operator} \quad H_\vi
\end{equation}
and
$$m(\vi) >\frac{\lb}{p}-\frac{1}{p} \quad \quad \textrm{for the operator} \quad \mathcal{H}_\vi.$$
The conditions
\begin{equation}\label{necessH}
m(\vi)\le \frac{\lb}{p}+\frac{1}{p^\prime}, \quad M(\vi) \ge\frac{\lb}{p}-\frac{1}{p}
\end{equation}
 are necessary for the boundedness of the operators $H_\bt$ and $\mathcal{H}_\bt$,
respectively.
\end{theorem}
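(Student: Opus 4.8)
The plan is to reduce the general--weight case to the homogeneous dilation computation already carried out for power weights in Theorem \ref{th1}, using the Matuszewska--Orlicz indices to replace the weight ratio $\vi(x)/\vi(xu)$ by a pure power of the dilation parameter. The operators in question are the natural weighted analogues
$$H_\vi f(x)=\frac{\vi(x)}{x}\intl_0^x\frac{f(t)}{\vi(t)}\,dt,\qquad \mathcal{H}_\vi f(x)=\vi(x)\intl_x^\ell\frac{f(t)}{t\,\vi(t)}\,dt,$$
which give back $H_\bt,\mathcal{H}_\bt$ when $\vi(x)=x^\bt$. Since for $\vi\in\widetilde{W}$ the characterizations of Theorem \ref{tstasda3zl} hold for exponents of either sign (Remark \ref{rem2}), the equivalence between the Zygmund--class form \eqref{usedli} and the index form \eqref{used} is immediate, so it suffices to argue with $M(\vi)$ and $m(\vi)$.

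For the sufficiency part for $H_\vi$ I would first substitute $t=xu$ to write
$$H_\vi f(x)=\intl_0^1\frac{\vi(x)}{\vi(xu)}\,f(xu)\,du,$$
and apply Minkowski's integral inequality to the Morrey norm (legitimate because $\|\cdot\|_{p,\lb}$ is a supremum of $L^p$--seminorms, exactly as in the proof of Theorem \ref{th1}), obtaining
$$\|H_\vi f\|_{p,\lb}\le\intl_0^1\left\|\frac{\vi(\cdot)}{\vi(\cdot\,u)}\,f(\cdot\,u)\right\|_{p,\lb}du.$$
The crucial step is the uniform pointwise bound on the weight ratio: choosing $\gm$ with $M(\vi)<\gm<\frac{\lb}{p}+\frac{1}{p^\prime}$, formula \eqref{na0} gives that $\vi(x)/x^\gm$ is almost decreasing, whence $\vi(x)/\vi(xu)\le C\,u^{-\gm}$ for all $0<u\le1$ and all $x$. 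Combining this with the dilation estimate $\|f(\cdot\,u)\|_{p,\lb}\le u^{\frac{\lb-1}{p}}\|f\|_{p,\lb}$ extracted from the computation in Theorem \ref{th1}, I arrive at
$$\|H_\vi f\|_{p,\lb}\le C\,\|f\|_{p,\lb}\intl_0^1 u^{\frac{\lb-1}{p}-\gm}\,du,$$
and the integral converges precisely because $\gm<\frac{\lb}{p}+\frac{1}{p^\prime}$. The operator $\mathcal{H}_\vi$ is handled the same way after the substitution $t=xs$, $s\ge1$ (extending $f$ by zero past $\ell$): choosing $\dl$ with $\frac{\lb-1}{p}<\dl<m(\vi)$, formula \eqref{na01} gives $\vi(x)/\vi(xs)\le C\,s^{-\dl}$ from the almost increasing behaviour of $\vi(x)/x^\dl$, and the resulting integral $\intl_1^\infty s^{-1-\dl+\frac{\lb-1}{p}}\,ds$ converges exactly when $\dl>\frac{\lb-1}{p}$.

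For the necessity part I would argue by pointwise minorization, again reducing to Theorem \ref{th1}. If $m(\vi)>\frac{\lb}{p}+\frac{1}{p^\prime}$, pick $\dl$ with $\frac{\lb}{p}+\frac{1}{p^\prime}<\dl<m(\vi)$; then $\vi(x)/x^\dl$ is almost increasing, so $\vi(t)\le C\,\vi(x)(t/x)^\dl$ for $t\le x$, which yields $H_\vi f\ge c\,H_\dl f$ for $f\ge0$. Testing on $f(x)=x^{\frac{\lb-1}{p}}\in\mathcal{L}^{p,\lb}$ (admissible by Lemma \ref{replacelem0}) makes $H_\dl f\equiv\infty$, so $H_\vi$ is unbounded; this forces $m(\vi)\le\frac{\lb}{p}+\frac{1}{p^\prime}$. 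Symmetrically, if $M(\vi)<\frac{\lb-1}{p}$ one picks $\lb_0$ with $M(\vi)<\lb_0<\frac{\lb-1}{p}$, uses the almost decreasing property to get $\mathcal{H}_\vi f\ge c\,\mathcal{H}_{\lb_0}f$, and the same test function produces $\mathcal{H}_{\lb_0}f(x)\sim c\,x^{\lb_0}$ as $x\to0$, which fails to lie in $\mathcal{L}^{p,\lb}$ since $\lb_0<\frac{\lb-1}{p}$ (Lemma \ref{replacelem0}), again contradicting boundedness.

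The main obstacle is the uniform weight--ratio estimate $\vi(x)/\vi(xu)\le C\,u^{-\gm}$: it is exactly here that the hypothesis $\vi\in\widetilde{W}$ is essential, because one needs the almost monotonicity characterizations \eqref{na01}--\eqref{na0} to remain valid for the (possibly negative) exponents $\gm,\dl$ lying strictly between the index value and the critical threshold, which is guaranteed only through Remark \ref{rem2}. Everything else --- the substitution, Minkowski's inequality, and the scaling of the Morrey norm --- is a direct transcription of the argument in Theorem \ref{th1}; in my view the membership $\vi\in V_{++}\cup V_{-+}$ does not enter this particular estimate and is needed later, when these Hardy bounds are applied to the difference kernel of the singular operator.
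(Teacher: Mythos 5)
Your proof is correct and follows essentially the same route as the paper: both arguments use the Matuszewska--Orlicz characterizations \eqref{na01}--\eqref{na0} to control the ratio $\vi(x)/\vi(t)$ by a power ratio with exponent strictly between the index and the critical threshold, thereby dominating (for sufficiency) and minorizing (for necessity, via the test function $x^{\frac{\lb-1}{p}}$) the operators $H_\vi,\ \mathcal{H}_\vi$ by power-weighted Hardy operators, the only cosmetic difference being that the paper cites Theorem \ref{th1} as a black box after a pointwise sandwich, while you re-run its dilation/Minkowski computation inline. Your remark that the hypothesis $\vi\in V_{++}\cup V_{-+}$ is not needed for this theorem also agrees with the paper, whose proof uses only $\vi\in\widetilde{W}$.
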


\begin{proof}
By \eqref{na01} and \eqref{na0}, the function $\cfrac{\vi(x)}{x^{m(\vi)-\ve}}$ is almost
increasing, while  $\cfrac{\vi(x)}{x^{M(\vi)+\ve}}$ is almost decreasing for any $\ve>0$.
Consequently,
$$C_1 \frac{x^{m(\vi)-\ve}}{t^{m(\vi)-\ve}}\le \frac{\vi(x)}{\vi(t)}\le C_2 \frac{x^{M(\vi)+\ve}}{t^{M(\vi)+\ve}}$$
and then
\begin{equation}\label{oha}
C_1 x^{m(\vi)-\ve-1}\intl_0^x \frac{f(t)\, dt}{t^{m(\vi)- \ve}} \le H_\vi f(x)\le C_2
x^{M(\vi)+\ve-1}\intl_0^x \frac{f(t)\, dt}{t^{M(\vi)+\ve}}
\end{equation}
supposing that $f(t) \ge
0$. Therefore, the operator $H_\vi$ is bounded by Theorem \ref{th1} for the  Hardy operators with
power weights, if $M(\vi)+\ve< \frac{\lb}{p}+\frac{1}{p^\prime}$, which
 is satisfied under the choice of $\ve>0$ sufficiently small, the latter being possible by
\eqref{used}. It remains to recall that condition \eqref{used}  is equivalent to the assumption
$\vi \in \mathbb{Z}_{\frac{\lb}{p}+\frac{1}{p^\prime}}$ by Theorem \ref{tstasda3zl}. The necessity
of the condition $m(\vi)\le \frac{\lb}{p}+\frac{1}{p^\prime}$ follows from the left-hand side
inequality in \eqref{oha}.

Similarly one may treat the case of the operator $\mathcal{H}_\vi$.
\end{proof}

\section{Weighted boundedness of the Hilbert transform in Morrey spaces}\label{weightedHilbert}
\setcounter{equation}{0}\setcounter{theorem}{0}

We start with the Cauchy singular integral along the real line or an interval ($\Gm=\rone$ or
$\Gm=[0,\ell]$) and denote
\begin{equation}\label{Hilbert}
S f(x)= \frac{1}{\pi}\int_{\rone}\frac{f(t)\, dt}{t-x}, \quad x\in \rone ; \quad
\mathbb{H}f(x)=\frac{1}{\pi}\intl_0^\ell \frac{f(t)\,dt}{t-x}, \quad 0<x<\ell\le \infty .
\end{equation}
In \cite{472a} there was proved the boundedness of  a class of Calderon-Zygmund  operators, which
includes in particular the following statement.
\begin{theorem}\label{th}
The operator  $S$ is bounded in the space $\mathcal{L}^{p,\lb}(\rone)$, $\ 1<p<\infty, 0\le \lb
<1.$
\end{theorem}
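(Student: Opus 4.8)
The plan is to derive this from the classical $L^p(\rone)$-boundedness of the Hilbert transform (the M.~Riesz theorem), valid precisely in the range $1<p<\infty$, combined with the local/global splitting that is standard for Calder\'on--Zygmund operators in Morrey spaces and mirrors the Chiarenza--Frasca scheme for the maximal operator. Fix an interval $B=B(x,r)\subset\rone$ and write $f=f_1+f_2$ with $f_1=f\chi_{B(x,2r)}$ and $f_2=f-f_1$. By linearity $Sf=Sf_1+Sf_2$, and I would bound each piece, after division by $r^{\lb/p}$, by $\|f\|_{p,\lb}$; in view of \eqref{1} this yields the claim.

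For the local part one uses boundedness on $L^p$ directly:
\[
\left(\intl_{B(x,r)}|Sf_1(y)|^p\,dy\right)^{1/p}\le \|Sf_1\|_{L^p(\rone)}\le C\|f_1\|_{L^p(\rone)} =C\left(\intl_{B(x,2r)}|f|^p\right)^{1/p}\le C(2r)^{\lb/p}\|f\|_{p,\lb},
\]
so that $r^{-\lb/p}\|Sf_1\|_{L^p(B(x,r))}\le C\|f\|_{p,\lb}$, with the constant independent of $x$ and $r$.

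The substantive part is the tail $Sf_2$. For $y\in B(x,r)$ and $|t-x|\ge 2r$ one has $|t-y|\ge\frac12|t-x|$, whence the integral defining $Sf_2(y)$ is absolutely convergent and
\[
|Sf_2(y)|\le \frac{2}{\pi}\intl_{|t-x|\ge 2r}\frac{|f(t)|}{|t-x|}\,dt .
\]
Splitting the domain into dyadic annuli $2^k r\le|t-x|<2^{k+1}r$, $k\ge1$, applying H\"older's inequality on each and using $\big(\intl_{B(x,2^{k+1}r)}|f|^p\big)^{1/p}\le (2^{k+1}r)^{\lb/p}\|f\|_{p,\lb}$, each annulus contributes a term comparable to $(2^kr)^{(\lb-1)/p}\|f\|_{p,\lb}$, so that
\[
|Sf_2(y)|\le C\|f\|_{p,\lb}\sum_{k=1}^\infty (2^k r)^{\frac{\lb-1}{p}}=C'\,r^{\frac{\lb-1}{p}}\|f\|_{p,\lb},
\]
the geometric series converging precisely because $\lb<1$. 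Since this bound is uniform in $y\in B(x,r)$, it gives $r^{-\lb/p}\|Sf_2\|_{L^p(B(x,r))}\le C\|f\|_{p,\lb}$, and combining the two pieces completes the argument.

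I expect the tail estimate for $Sf_2$ to be the main point: it is there that the hypothesis $\lb<1$ is used in an essential way, both to sum the dyadic series and to guarantee that the principal-value integral defining $Sf$ converges at infinity for a general Morrey function (which need not lie in $L^p(\rone)$). The hypothesis $1<p<\infty$, by contrast, enters only through the $L^p$-boundedness of $S$ in the local part; for $p=1$ the operator is already unbounded on $L^1$, and the case $\lb=0$ simply reduces to the classical $L^p$ statement. Alternatively one could follow the sharp-maximal/Fefferman--Stein route via \eqref{alvar} together with the boundedness of $M$, but for $\Gm=\rone$ the direct splitting above is the most economical.
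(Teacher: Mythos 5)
Your proof is correct, but it takes a genuinely different route from the paper: for this theorem the paper gives no proof at all, it simply quotes the result from Peetre \cite{472a}, where the invariance of $\mathcal{L}^{p,\lambda}$ under a class of convolution Calder\'on--Zygmund operators is established. Your argument is a self-contained derivation from the classical $L^p$ theorem of M.~Riesz: the splitting $f=f\chi_{B(x,2r)}+f_2$, the $L^p$ estimate for the local piece, and the dyadic-annuli bound $|Sf_2(y)|\le C r^{(\lambda-1)/p}\|f\|_{p,\lambda}$ for $y\in B(x,r)$ are all accurate (the exponents combine as $\frac{\lambda}{p}+\frac{1}{p'}-1=\frac{\lambda-1}{p}$), and you correctly isolate the role of $\lambda<1$, needed both to sum the geometric series and to make $Sf$ well defined at infinity for Morrey functions that need not belong to $L^p(\rone)$. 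As to what each approach buys: the paper's citation is instantaneous and inherits the generality of Peetre's class of operators, while your proof is elementary, transparent about the constants being uniform in $x$ and $r$, and close in spirit to the Chiarenza--Frasca scheme that the paper invokes elsewhere. It is also worth noting that when the paper does have to prove a statement of this kind in a setting where no reference exists --- the operator $S_\Gamma$ on Carleson curves, Theorem \ref{theorsing} --- it chooses a third route, via the Alvarez--P\'erez pointwise estimate \eqref{alvar}, the Fefferman--Stein inequality in Morrey norms (Lemma \ref{lemFSM}), and the maximal theorem (Theorem \ref{maximal}); your splitting argument would adapt to that setting as well, but its local part would then require the $L^p(\Gamma)$ boundedness of $S_\Gamma$ on Carleson curves as input, whereas the paper's route takes the pointwise estimate \eqref{alvar} as its external ingredient instead.
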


\begin{corollary}\label{cor}
The Hilbert transform operator $\mathbb{H}$ is bounded in the space
$\mathcal{L}^{p,\lb}([0,\ell])$, $\ 1<p<\infty, 0\le \lb <1.$
\end{corollary}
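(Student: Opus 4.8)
The plan is to obtain $\mathbb{H}$ as the restriction to $[0,\ell]$ of the line operator $S$ applied to the zero extension of $f$, and then to read the bound off from Theorem \ref{th}. Denote by $\tilde f$ the extension of $f$ by zero to all of $\rone$. For $0<x<\ell$ one has $S\tilde f(x)=\frac{1}{\pi}\intl_{\rone}\frac{\tilde f(t)\,dt}{t-x}=\frac{1}{\pi}\intl_0^\ell\frac{f(t)\,dt}{t-x}=\mathbb{H}f(x)$, so that $\mathbb{H}f=\bigl(S\tilde f\bigr)\big|_{[0,\ell]}$. Consequently it suffices to check that two elementary operations act boundedly on the Morrey scale: extension by zero $E\colon\mathcal{L}^{p,\lb}([0,\ell])\to\mathcal{L}^{p,\lb}(\rone)$, and restriction $R\colon\mathcal{L}^{p,\lb}(\rone)\to\mathcal{L}^{p,\lb}([0,\ell])$. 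Granting these, $\|\mathbb{H}f\|_{p,\lb}=\|RSEf\|_{p,\lb}\le\|R\|\,\|S\|\,\|E\|\,\|f\|_{p,\lb}$, with $\|S\|$ finite (for $1<p<\infty$) by Theorem \ref{th}.

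The restriction bound is immediate: for $x\in[0,\ell]$ and $r>0$ the Morrey ball taken inside $[0,\ell]$ sits inside the line ball $(x-r,x+r)$, so $\frac{1}{r^\lb}\intl_{B(x,r)\cap[0,\ell]}|g|^p\le\frac{1}{r^\lb}\intl_{x-r}^{x+r}|g|^p\le\|g\|_{\mathcal{L}^{p,\lb}(\rone)}^p$; taking the supremum gives $\|Rg\|_{p,\lb}\le\|g\|_{\mathcal{L}^{p,\lb}(\rone)}$. The substance of the argument is therefore the \emph{extension bound}, namely $\|\tilde f\|_{\mathcal{L}^{p,\lb}(\rone)}\le C\|f\|_{\mathcal{L}^{p,\lb}([0,\ell])}$, and here the only point that needs care is the behaviour of line balls whose centres lie outside $[0,\ell]$.

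To handle this I would re-centre such balls. Since $\tilde f$ vanishes off $[0,\ell]$, only balls meeting $[0,\ell]$ contribute: $\intl_{B(x,r)}|\tilde f|^p=\intl_{(x-r,x+r)\cap[0,\ell]}|f|^p$. If $x\in[0,\ell]$ this is exactly a Morrey ball for $f$, bounded by $r^\lb\|f\|_{p,\lb}^p$. If $x<0$ (the case $x>\ell$ being symmetric) and $x+r>0$, then since $x+r<r$ the positive part $(x-r,x+r)\cap[0,\ell]$ is contained in $B(0,r)\cap[0,\ell]$, a Morrey ball for $f$ centred at the endpoint $0$; hence again $\intl_{B(x,r)}|\tilde f|^p\le r^\lb\|f\|_{p,\lb}^p$. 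Dividing by $r^\lb$ and taking the supremum over all $x\in\rone$ and $r>0$ yields $\|\tilde f\|_{\mathcal{L}^{p,\lb}(\rone)}\le\|f\|_{\mathcal{L}^{p,\lb}([0,\ell])}$, in fact with constant $1$.

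I expect the main (and essentially only) obstacle to be this re-centring step for off-interval balls; everything else is a direct composition of bounded maps. A subtlety worth recording is that the argument relies on the intrinsic definition of the Morrey ball on $[0,\ell]$ as $(x-r,x+r)\cap[0,\ell]$ with centre $x\in[0,\ell]$: the endpoint $0$ is an admissible centre, which is precisely what makes the re-centring legitimate and keeps the constant dimension property \eqref{2} in force on the interval.
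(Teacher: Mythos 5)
Your proof is correct and follows essentially the same route as the paper: the corollary is obtained from Theorem \ref{th} precisely by the zero-extension/restriction mechanism you spell out (your re-centring argument for off-interval balls, giving extension with constant $1$, is sound), and this is the same device the paper itself invokes later when it reduces bounded Carleson curves to unbounded ones in the proof of Theorem \ref{theorsing}.
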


\subsection{Reduction of the Hilbert transform  operator with weight to the Hardy operators}

  The boundedness of the singular operator
$\mathbb{H}$ in the space $\mathcal{L}^{p,\lb}([0,\ell],\varrho)$ with a weight $\varrho$ is  the
same as the boundedness of the operator $\varrho \mathbb{H}\frac{1}{\varrho}$ in the space
$\mathcal{L}^{p,\lb}([0,\ell])$. In view of Corollary \ref{cor}, the latter boundedness will follow
from  the boundedness of the operator
\begin{equation}\label{7b}
K f(x): \ = \ \left(\varrho \mathbb{H}\frac{1}{\varrho}-\mathbb{H} \right) f(x) = \intl_0^\ell
K(x,t)f(t)\, dt,
\end{equation}
 where
$$K(x,t): = \frac{\varrho(x)-\varrho(t)}{\varrho(t)(t-x)}=
\frac{\vi(|x-x_0|)-\vi(|t-x_0|)}{\vi(|t-x_0|)(t-x)}$$ in the case $\varrho(x)=\vi(|x-x_0|), x_0\in
[0,\ell]$.

\subsubsection{The case $x_0=0$}
We start with the case $x_0=0$, so that $K(x,t) = \frac{\vi(x)-\vi(t)}{\vi(t)(t-x)}.$

\begin{lemma}\label{lem2}
The kernel $K(x,t)$ admits the estimate
\begin{equation}\label{8}
|K(x,t)|\le \left\{\begin{array}{ll}\cfrac{C}{x}\cfrac{\vi(x)}{\vi(t)},  &\  \textrm{if} \ \ t<x\\
\cfrac{C}{t}, &\  \textrm{if} \ \ t>x
\end{array} \right.
\end{equation}
when $\vi\in \mathbf{V}_{++}$, and
\begin{equation}\label{9}
|K(x,t)|\le \left\{\begin{array}{ll}\cfrac{C}{x},  &\  \textrm{if} \ \ t<x\\
\cfrac{C}{t}\cfrac{\vi(x)}{\vi(t)}, &\  \textrm{if} \ \ t>x
\end{array} \right.
\end{equation}
when $\vi\in \mathbf{V}_{-+}$.
\end{lemma}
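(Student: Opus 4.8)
The plan is to reduce the estimate directly to the defining inequalities of the classes $\mathbf{V}_{++}$ and $\mathbf{V}_{-+}$, which already control the difference quotient of $\vi$. First I would rewrite the kernel as
$$|K(x,t)| = \frac{1}{\vi(t)}\left|\frac{\vi(x)-\vi(t)}{x-t}\right|,$$
so that the whole task is to bound the difference quotient and then divide by $\vi(t)>0$, positivity being guaranteed since $\vi\in W$. I would then apply the membership condition with the second argument specialized to $y=t$, and split according to whether $t<x$ or $t>x$, since the right-hand sides of \eqref{4} and \eqref{5b} are expressed through $x_+=\max(x,t)$ and $x_-=\min(x,t)$.

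For $\vi\in\mathbf{V}_{++}$, condition \eqref{4} gives $\left|\frac{\vi(x)-\vi(t)}{x-t}\right|\le C\frac{\vi(x_+)}{x_+}$. When $t<x$ one has $x_+=x$, so the difference quotient is at most $C\vi(x)/x$, and dividing by $\vi(t)$ yields $|K(x,t)|\le \frac{C}{x}\frac{\vi(x)}{\vi(t)}$; when $t>x$ one has $x_+=t$, giving the bound $C\vi(t)/t$, and the factor $\vi(t)$ cancels to leave $|K(x,t)|\le C/t$. This produces exactly \eqref{8}.

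For $\vi\in\mathbf{V}_{-+}$, condition \eqref{5b} gives $\left|\frac{\vi(x)-\vi(t)}{x-t}\right|\le C\frac{\vi(x_-)}{x_+}$. When $t<x$ then $x_-=t$ and $x_+=x$, so after dividing by $\vi(t)$ the factor $\vi(t)$ cancels and $|K(x,t)|\le C/x$; when $t>x$ then $x_-=x$ and $x_+=t$, so $|K(x,t)|\le \frac{1}{\vi(t)}\cdot C\frac{\vi(x)}{t}=\frac{C}{t}\frac{\vi(x)}{\vi(t)}$, which is \eqref{9}.

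Since each of the four estimates is an immediate substitution into the relevant class definition, there is no genuine analytic obstacle here; the only point requiring care is matching $x_\pm$ correctly to the ordering of $x$ and $t$ in each case, and recording that the constants $C$ appearing in \eqref{8} and \eqref{9} are precisely those furnished by the defining inequalities of $\mathbf{V}_{++}$ and $\mathbf{V}_{-+}$.
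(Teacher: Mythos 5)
Your proof is correct and follows exactly the paper's approach: the paper simply states that estimates \eqref{8}--\eqref{9} follow immediately from the definitions of $\mathbf{V}_{++}$ and $\mathbf{V}_{-+}$, and your case analysis (matching $x_+$ and $x_-$ to the ordering of $x$ and $t$ and dividing by $\vi(t)$) is precisely the verification being left to the reader there.
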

\begin{proof}
Estimates \eqref{8}-\eqref{9} follow immediately from the definition of the classes
$\mathbf{V}_{++}, \mathbf{V}_{-+}$.
\end{proof}

\begin{corollary}\label{cor1}
The operator $K=\varrho \mathbb{H}\frac{1}{\varrho}-\mathbb{H} $ is dominated by the weighted Hardy
operators
\begin{equation}\label{10}
|K f(x)| \le C \frac{\vi(x)}{x}\intl_0^x \frac{|f(t)|dt}{\vi(t)} + C \intl_x^\ell
\frac{|f(t)|dt}{t}
\end{equation}
when $\vi \in \mathbb{V}_{++}$, and
\begin{equation}\label{10sw}
|K f(x)| \le \frac{C}{x}\intl_0^x |f(t)|dt + C \vi(x)\intl_x^\ell \frac{|f(t)|dt}{t\vi(t)}
\end{equation}
when $\vi \in \mathbb{V}_{-+}$. In particular, when $\vi(x)=x^\al,$
\begin{equation}\label{11}
|K f(x)| \le \frac{C}{x}\intl_0^x \left(\frac{x}{t}\right)^{\max(\al,0)}|f(t)|dt + C \intl_x^\ell
\left(\frac{x}{t}\right)^{\min(\al,0)} \frac{|f(t)|dt}{t}.
\end{equation}
\end{corollary}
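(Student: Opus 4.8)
The plan is to start directly from the integral representation \eqref{7b}, namely $Kf(x)=\intl_0^\ell K(x,t)f(t)\,dt$, pass to absolute values, and split the integral at the point $t=x$ into a near-origin part $\intl_0^x$ and a tail part $\intl_x^\ell$. On each of the two pieces the pointwise kernel bounds supplied by Lemma \ref{lem2} apply, with different expressions according to whether $t<x$ or $t>x$; these are precisely the two regimes produced by the splitting, so no further partitioning is needed.

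For the case $\vi\in\mathbf{V}_{++}$ I would insert estimate \eqref{8}: on $\intl_0^x$ use $|K(x,t)|\le \frac{C}{x}\frac{\vi(x)}{\vi(t)}$, pulling the factor $\frac{\vi(x)}{x}$ (which is independent of $t$) outside the integral, and on $\intl_x^\ell$ use $|K(x,t)|\le\frac{C}{t}$. This yields \eqref{10} at once. Symmetrically, for $\vi\in\mathbf{V}_{-+}$ I would insert \eqref{9}: the near part contributes $\frac{C}{x}\intl_0^x|f(t)|\,dt$, and the tail, after pulling out the factor $\vi(x)$, contributes $C\vi(x)\intl_x^\ell\frac{|f(t)|\,dt}{t\vi(t)}$, giving \eqref{10sw}. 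No cancellation or delicate estimation is involved here; the content is entirely carried by Lemma \ref{lem2}.

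The only step that needs a moment of care is the power specialization \eqref{11}. Here I would recall from the computations \eqref{6}--\eqref{7a} that $x^\al\in\mathbf{V}_{++}$ precisely when $\al\ge 0$ and $x^\al\in\mathbf{V}_{-+}$ precisely when $\al\le 0$, so the two sign regimes of $\al$ are handled by the two bounds just established. For $\al\ge 0$, substituting $\vi(t)=t^\al$ into \eqref{10} turns the near part into $\frac{C}{x}\intl_0^x(x/t)^\al|f(t)|\,dt$ and leaves the tail as $C\intl_x^\ell\frac{|f(t)|}{t}\,dt$; since $\max(\al,0)=\al$ and $\min(\al,0)=0$, this is exactly \eqref{11}. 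For $\al\le 0$, substituting into \eqref{10sw} gives $\frac{C}{x}\intl_0^x|f(t)|\,dt$ for the near part and $C\intl_x^\ell(x/t)^\al\frac{|f(t)|}{t}\,dt$ for the tail; now $\max(\al,0)=0$ and $\min(\al,0)=\al$, again matching \eqref{11}. Thus a single formula covers both signs, the $\max$ in the first integral and the $\min$ in the second selecting the correct exponent automatically. I do not anticipate any genuine obstacle: the whole corollary is a direct insertion of the kernel bounds, the only bookkeeping being the identification of the admissible range of $\al$ for each weight class.
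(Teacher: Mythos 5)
Your proposal is correct and follows exactly the route the paper intends: the corollary is an immediate consequence of Lemma \ref{lem2}, obtained by splitting $\intl_0^\ell K(x,t)f(t)\,dt$ at $t=x$ and inserting the kernel bounds \eqref{8} (for $\mathbf{V}_{++}$) and \eqref{9} (for $\mathbf{V}_{-+}$), with the power case \eqref{11} read off via the equivalences $x^\al\in\mathbf{V}_{++}\Leftrightarrow\al\ge 0$ and $x^\al\in\mathbf{V}_{-+}\Leftrightarrow\al\le 0$ from \eqref{6}--\eqref{7a}. Your bookkeeping with $\max(\al,0)$ and $\min(\al,0)$ matches the paper's formula precisely.
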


In the sequel we use the notation
\begin{equation}\label{12}
H_\vi f(x)= \frac{\vi(x)}{x}\intl_0^x \frac{f(t)dt}{\vi(t)},  \quad
 \mathcal{H}_\vi f(x) = \vi(x)\intl_x^\ell \frac{f(t)dt}{t\vi(t)}
\end{equation}
  without fear of confusion with notation in \eqref{confus}

\begin{corollary}\label{cor5}
Let $\vi\in V_{++}\cup V_{-+}$. By \eqref{10}-\eqref{10sw}, the boundedness of the Hardy operators
$H_\vi$ and
 $\mathcal{H}_\vi$ in Morrey space $\mathcal{L}^{p,\lb}(0,\ell)$ yields that of the weighted singular  operator
 $\varrho\mathbb{H}\frac{1}{\varrho}, \ \varrho(x)=\vi(x), \ 1<p<\infty, \ 0\le \lb<1$.
 \end{corollary}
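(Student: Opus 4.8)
The plan is to split $\varrho\,\mathbb{H}\frac{1}{\varrho}$ into the unweighted singular operator plus a remainder that is controlled by Hardy operators. Writing
$$\varrho\,\mathbb{H}\frac{1}{\varrho}=\mathbb{H}+K,\qquad K=\varrho\,\mathbb{H}\frac{1}{\varrho}-\mathbb{H},$$
and recalling that $\mathbb{H}$ is bounded in $\mathcal{L}^{p,\lb}([0,\ell])$ for $1<p<\infty$, $0\le\lb<1$ by Corollary \ref{cor}, it suffices to bound $K$ in this space; the claim then follows by the triangle inequality.

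The passage from the pointwise estimates of Corollary \ref{cor1} to norm estimates rests on the monotonicity of the Morrey norm: since $\|g\|_{p,\lb}$ depends on $g$ only through $|g|$, a pointwise inequality $|g(x)|\le|h(x)|$ forces $\|g\|_{p,\lb}\le\|h\|_{p,\lb}$. Suppose first that $\vi\in V_{++}$. Applying this to \eqref{10} gives
$$\|Kf\|_{p,\lb}\le C\,\|H_\vi|f|\|_{p,\lb}+C\,\|\mathcal{H}_0|f|\|_{p,\lb},$$
where $H_\vi$ is the operator of \eqref{12} and $\mathcal{H}_0$ denotes the operator $\mathcal{H}_\bt$ of \eqref{confus} at $\bt=0$ (equivalently, $\mathcal{H}_\vi$ of \eqref{12} with $\vi\equiv1$). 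The first term is controlled by the hypothesized boundedness of $H_\vi$; the second is bounded by Theorem \ref{th1}, whose relevant half of \eqref{13} reduces at $\bt=0$ to $0>\frac{\lb}{p}-\frac{1}{p}$, that is, to the standing assumption $\lb<1$. Hence $\|Kf\|_{p,\lb}\le C\|f\|_{p,\lb}$.

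The case $\vi\in V_{-+}$ is symmetric: applying the norm to \eqref{10sw} gives
$$\|Kf\|_{p,\lb}\le C\,\|H_0|f|\|_{p,\lb}+C\,\|\mathcal{H}_\vi|f|\|_{p,\lb},$$
with $H_0$ the operator $H_\bt$ of \eqref{confus} at $\bt=0$; it is bounded by Theorem \ref{th1}, the condition $0<\frac{\lb}{p}+\frac{1}{p^\prime}$ holding automatically for $p>1$, while $\mathcal{H}_\vi$ is bounded by hypothesis. I do not expect a genuine obstacle here, the analytic content having already been absorbed into Lemma \ref{lem2} and Corollary \ref{cor1}; the one point to keep in view is that the leftover term in each of \eqref{10} and \eqref{10sw} is an \emph{unweighted} Hardy operator, not covered by the hypothesis on $H_\vi,\mathcal{H}_\vi$, and so must be disposed of separately through the endpoint case $\bt=0$ of the power-weight Theorem \ref{th1}. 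Combining the bound for $K$ with the boundedness of $\mathbb{H}$ yields the boundedness of $\varrho\,\mathbb{H}\frac{1}{\varrho}$, as asserted.
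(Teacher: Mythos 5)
Your proof is correct and follows essentially the same route as the paper's (largely implicit) argument: split $\varrho\,\mathbb{H}\frac{1}{\varrho}=\mathbb{H}+K$, use Corollary \ref{cor} for $\mathbb{H}$, dominate $K$ via \eqref{10}--\eqref{10sw} together with the lattice property of the Morrey norm, and bound the weighted term by hypothesis. Your explicit treatment of the leftover \emph{unweighted} Hardy term through Theorem \ref{th1} at $\bt=0$ (where \eqref{13} reduces to $\lb<1$ and $p>1$) is exactly the detail the paper relies on, as seen in its later use of $H_{\max(\al,0)}$ and $\mathcal{H}_{\min(\al,0)}$ in the proof of Theorem \ref{cor3}.
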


\subsubsection{The case $x_0\ne 0$}

 The
following  simple technical fact is valid.

\begin{lemma}\label{techn}
Let $-\infty\le a<b\le \infty$, $x_0\in(a,b)$ and let $\vi(x)$ be a non-negative function on
$\mathbb{R}^1_+$. If the operator
$$K f(x) =\intl_{x_0}^b\left|\frac{\vi(t-x_0)-\vi(x-x_0)}{t-x}\right|
\frac{f(t)}{\vi(t-x_0)}dt, \quad x_0<x<b$$ is bounded in the space $L^{p,\lb}([x_0,b]),$ then the
operator
$$\widetilde{K} f(x) =\intl_{a}^b\left|\frac{\vi(|t-x_0|)-\vi(|x-x_0|)}{t-x}\right|
\frac{f(t)}{\vi(|t-x_0|)}dt, \quad a<x<b$$ is bounded in the space $L^{p,\lb}([a,b]).$
\end{lemma}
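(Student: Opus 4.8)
The plan is to split $\widetilde K$ according to the positions of the output point $x$ and the integration variable $t$ relative to the singular point $x_0$, and to reduce every resulting piece to the one-sided operator $K$ by means of the reflection $Ry=2x_0-y$. Write $\widetilde K f=\widetilde K_{rr}f+\widetilde K_{rl}f+\widetilde K_{ll}f+\widetilde K_{lr}f$, where the first subscript records whether $x$ lies to the right ($x>x_0$) or to the left ($x<x_0$) of $x_0$, and the second does the same for $t$. On the region $x,t>x_0$ one has $|x-x_0|=x-x_0$ and $|t-x_0|=t-x_0$, so $\widetilde K_{rr}f$ is literally $K(f\chi_{(x_0,b)})$; by hypothesis this is bounded on $L^{p,\lb}([x_0,b])$, and since replacing $f$ by $f\chi_{(x_0,b)}$ does not increase the Morrey norm on $[a,b]$, this piece is controlled by $C\|f\|_{p,\lb}$.

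For the other three pieces I would use that $R$ is an isometry of arc length which maps balls to balls, hence an isometry from $L^{p,\lb}([a,x_0])$ onto $L^{p,\lb}([x_0,2x_0-a])$. Two elementary observations drive the argument. First, when $t$ and $x$ lie on opposite sides of $x_0$ one has $|t-x|=|x-x_0|+|t-x_0|$, whereas reflecting the point lying on the left (say $t^{*}=Rt$, so that $t^{*}-x_0=|t-x_0|$) gives $|t^{*}-x|=\bigl| |x-x_0|-|t-x_0| \bigr|\le|t-x|$; since moreover $\vi(|t-x_0|)=\vi(t^{*}-x_0)$ and $\vi(|x-x_0|)=\vi(x-x_0)$, the cross kernel is pointwise dominated by the one-sided kernel evaluated at the reflected argument,
$$\frac{\bigl|\vi(|t-x_0|)-\vi(|x-x_0|)\bigr|}{|t-x|\,\vi(|t-x_0|)}\le\frac{\bigl|\vi(t^{*}-x_0)-\vi(x-x_0)\bigr|}{|t^{*}-x|\,\vi(t^{*}-x_0)}.$$
Second, on the region $x,t<x_0$ the substitution $t=Rs$, $x=Ru$ turns the kernel into the one-sided kernel in the variables $u,s>x_0$, because both $|t-x|=|u-s|$ and the two occurrences of $\vi$ are preserved.

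Carrying this out, $\widetilde K_{rl}f$ (with $x>x_0>t$) is, after $t\mapsto Rt$, dominated by $K$ applied to the reflected $|f|$; $\widetilde K_{lr}f$ (with $t>x_0>x$) is, after $x\mapsto Rx$, the restriction to $(a,x_0)$ of $Kf$ evaluated at the reflected point; and $\widetilde K_{ll}f$ is, after reflecting both variables, exactly $K$ of the reflected $f$. Using that $R$ preserves the Morrey norm and sends $(a,x_0)$ onto $(x_0,2x_0-a)$, each of these three pieces is estimated by the operator norm of $K$ on $[x_0,\max(b,2x_0-a)]$ times $\|f\|_{p,\lb}$. Summing the four bounds via $\|\widetilde K f\|_{p,\lb}\le\|\widetilde K f\,\chi_{(x_0,b)}\|_{p,\lb}+\|\widetilde K f\,\chi_{(a,x_0)}\|_{p,\lb}$ yields the boundedness of $\widetilde K$ on $L^{p,\lb}([a,b])$.

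The point to watch is the reflected range: $R$ sends $(a,x_0)$ to $(x_0,2x_0-a)$, which is contained in $[x_0,b]$ precisely when $2x_0-a\le b$, and then the hypothesis on $[x_0,b]$ applies directly, since the reflected data are supported inside $[x_0,b]$ and restriction only decreases Morrey norms. When $2x_0-a>b$ — in particular when $a=-\infty$ — one must know that the one-sided operator is bounded on the longer interval too; here I would invoke the fact, established in Section \ref{Hardy}, that the Morrey boundedness of these homogeneous Hardy-type operators is uniform in the length of the interval (the constants in Theorem \ref{th1} and the index criterion of Theorem \ref{theor} do not depend on $\ell$). The genuinely delicate step is thus the cross-term domination through $|t-x|\ge\bigl| |x-x_0|-|t-x_0| \bigr|$; the rest is bookkeeping with the reflection isometry.
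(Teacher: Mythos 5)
Your proof is correct and follows essentially the same route as the paper's: the paper likewise splits $\widetilde K$ into the four pieces determined by the signs of $x-x_0$ and $t-x_0$, reduces the both-negative piece to $K$ by reflection (using the evenness of $\varphi(|\cdot|)$ in its argument), and dominates the two cross kernels by the one-sided kernel via $|t-x|\ge\bigl||x-x_0|-|t-x_0|\bigr|$. The only difference is that the paper normalizes at the outset to $-a=b>0$, $x_0=0$, so the reflected-interval issue you raise never appears, whereas you treat the asymmetric case $2x_0-a>b$ explicitly by the uniformity of the Hardy-operator bounds in the interval length --- a point the paper's ``without loss of generality'' silently absorbs.
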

\begin{proof}
Without loss of generality we may take $-a=b>0$ and $x_0=0$. Splitting the square $Q=\{(x,t):
-a<x<a,-a<t<a\}$ into the sum of 4 squares $Q=Q_{++}+Q_{--}+Q_{-+}+Q_{+-}$, where the first sign in
the index corresponds to the sign of $x$ and the second to that of $t$, we reduce the boundedness
of the operator $\widetilde{K}$ to that of the corresponding operators $\widetilde{K}_{++},
\widetilde{K}_{--}, \widetilde{K}_{-+}, \widetilde{K}_{+-}$. The operators $\widetilde{K}_{++}$ and
$\widetilde{K}_{--}$ are bounded, the former by assumption, the latter being obviously reduced to
the former. Because of  the evenness of the function $\vi(t)$, the kernels of the operators
$\widetilde{K}_{-+}$ and $\widetilde{K}_{+-}$ are
 obviously dominated by the kernels  of the operator $\widetilde{K}_{++}$, which completes the
proof.
\end{proof}

By Lemma \ref{techn}, the validity of the statement of Corollary \ref{cor5} in the case $x_0\ne$
follows from the case $x_0=0.$

\subsection{Weighted  boundedness of the Hilbert transform operator; the case of power weights}

\begin{theorem}\label{cor3} The weighted singular operator
$$ S_\al f(x)= \frac{x^\al}{\pi}\intl_0^\ell\frac{f(t)\,dt}{t^\al (t-x)} $$ is bounded in the
space $\mathcal{L}^{p,\lb}([0,\ell]),$ where $0<\ell \le\infty$, $1<p<\infty, \ 0\le \lb <1$, if
and only if
\begin{equation}\label{16}
-\frac{1}{p} <\al-\frac{\lb}{p}< \frac{1}{p^\prime}.
\end{equation}
\end{theorem}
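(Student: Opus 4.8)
The plan is to specialise the general-weight reduction already built in Section \ref{weightedHilbert} to the power weight $\varrho(x)=x^\al$ (so $\vi(x)=x^\al$, $x_0=0$). Since $S_\al f=\varrho\,\mathbb{H}\frac{1}{\varrho}f=\mathbb{H}f+Kf$ with $K$ as in \eqref{7b}, and $\mathbb{H}$ is bounded on $\mathcal{L}^{p,\lb}([0,\ell])$ by Corollary \ref{cor}, the boundedness of $S_\al$ is equivalent to that of $K$. For the \emph{sufficiency} I would invoke the pointwise domination \eqref{11}: in the notation of \eqref{confus} its two summands are exactly $C\,H_{\max(\al,0)}|f|(x)$ and $C\,\mathcal{H}_{\min(\al,0)}|f|(x)$. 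Hence by Theorem \ref{th1} it suffices that $\max(\al,0)<\frac{\lb}{p}+\frac{1}{p^\prime}$ and $\min(\al,0)>\frac{\lb}{p}-\frac{1}{p}$. A short case check shows both reduce to \eqref{16}: since $\frac{\lb}{p}+\frac{1}{p^\prime}>0$ the first is automatic for $\al\le 0$ and is the right inequality of \eqref{16} for $\al>0$; since $\frac{\lb}{p}-\frac{1}{p}=\frac{\lb-1}{p}<0$ the second is automatic for $\al\ge 0$ and is the left inequality for $\al<0$. This yields the ``if'' part.

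For the \emph{only if} part I would test $S_\al$ on explicit functions and invoke Lemmas \ref{replacelem0} and \ref{replacelem3}. To force the upper bound $\al<\frac{\lb}{p}+\frac{1}{p^\prime}$, take $f(t)=t^{\frac{\lb-1}{p}}$, which lies in $\mathcal{L}^{p,\lb}([0,\ell])$ by Lemma \ref{replacelem0}. Near $t=0$ the integrand of $S_\al f(x)$ behaves like $t^{\frac{\lb-1}{p}-\al}/(t-x)\sim -x^{-1}t^{\frac{\lb-1}{p}-\al}$, which is non-integrable at the origin precisely when $\frac{\lb-1}{p}-\al\le -1$, i.e. $\al\ge\frac{\lb}{p}+\frac{1}{p^\prime}$; as the singularity at $0$ is one-sided, no principal value rescues it, so $S_\al f\equiv\infty$ and $S_\al$ cannot be bounded.

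For the lower bound $\al>\frac{\lb-1}{p}$ I would inspect $S_\al f$ near $x=0$. If $\al<\frac{\lb-1}{p}$, take $f=\chi_{[\ell/2,\ell]}\in\mathcal{L}^{p,\lb}$: for small $x$ the integral $\intl_{\ell/2}^\ell t^{-\al}(t-x)^{-1}\,dt$ tends to a positive constant, so $|S_\al f(x)|\ge c\,x^\al$ with $x^\al\notin\mathcal{L}^{p,\lb}$ near $0$ by Lemma \ref{replacelem0}. For the borderline $\al=\frac{\lb-1}{p}$ a sharper test is required: writing $S_\al f(x)=x^\al\,\mathbb{H}(t^{-\al}f)(x)$, take $f(t)=t^{\frac{\lb-1}{p}}\chi_{[0,\dl]}(t)\in\mathcal{L}^{p,\lb}$, so that $t^{-\al}f=\chi_{[0,\dl]}$ and $\mathbb{H}\chi_{[0,\dl]}(x)=\frac{1}{\pi}\ln\frac{|\dl-x|}{|x|}\sim\frac{1}{\pi}\ln\frac{1}{x}$; thus $S_\al f(x)\sim\frac{1}{\pi}\,x^{\frac{\lb-1}{p}}\ln\frac{1}{x}$, which is excluded from $\mathcal{L}^{p,\lb}$ by Lemma \ref{replacelem3}. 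Combining the three cases gives $\frac{\lb-1}{p}<\al<\frac{\lb}{p}+\frac{1}{p^\prime}$, i.e. \eqref{16}.

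The routine parts (the Minkowski-type manipulations and the change of variables) are already absorbed into Theorem \ref{th1} and estimate \eqref{11}, together with the scale-invariance checks that the test functions lie in the Morrey space uniformly in $\ell$, including $\ell=\infty$. The genuine difficulty is the necessity at the two endpoints: there the exponent of $S_\al f$ matches that of $f$, so the obstruction is not a change of power but the emergence of a logarithmic factor (left endpoint) or of an outright divergent integral (right endpoint); pinning these down is precisely what forces the strict inequalities, via Lemma \ref{replacelem3}.
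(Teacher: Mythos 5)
Your argument is, in substance, the paper's own. The sufficiency is obtained exactly as in the paper: reduction of $S_\al$ to $K=\varrho \mathbb{H}\frac{1}{\varrho}-\mathbb{H}$ via Corollary \ref{cor}, the pointwise bound \eqref{11} identifying the two parts of $K$ with $H_{\max(\al,0)}$ and $\mathcal{H}_{\min(\al,0)}$, then Theorem \ref{th1}; your case check reproduces the paper's choice $\bt_1=\max(\al,0)$, $\bt_2=\min(\al,0)$. The necessity is run on the same test functions up to harmless variants: for $\al<\frac{\lb-1}{p}$ the paper uses $f(t)=t^{\frac{\lb-1}{p}}$ where you use $\chi_{[\ell/2,\ell]}$ (both give $S_\al f(x)\sim c\,x^\al\notin\mathcal{L}^{p,\lb}$, and yours is if anything cleaner); in the borderline case $\al=\frac{\lb-1}{p}$ the paper produces the same power-logarithmic function $c\,x^\al\ln\frac{2\ell}{x}$, excluded by Lemma \ref{replacelem3}, as your truncated version does; and your non-existence argument for $\al\ge \frac{\lb}{p}+\frac{1}{p^\prime}$ (failure of $f/t^\al$ to be in $L^1$ near the origin, which no principal value can repair) is literally the paper's.

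The one genuine gap is the value $\lb=0$, which the theorem includes. Your necessity argument rests on $f(t)=t^{\frac{\lb-1}{p}}$ (or its truncation $t^{\frac{\lb-1}{p}}\chi_{[0,\dl]}$) lying in $\mathcal{L}^{p,\lb}$ ``by Lemma \ref{replacelem0}'', but that lemma gives membership of the borderline power only for $0<\lb<1$; for $\lb=0$ Remark \ref{replacerem1} states that the admissible range is $\gm>-\frac{1}{p}$, and indeed $t^{-1/p}\notin L^p([0,\ell])$. Hence your proofs of the upper bound $\al<\frac{1}{p^\prime}$ and of the borderline exclusion $\al\ne-\frac{1}{p}$ collapse at $\lb=0$ (your remaining case, $\al<-\frac{1}{p}$ tested on $\chi_{[\ell/2,\ell]}$, does survive there). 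The paper sidesteps this by treating $\lb=0$ separately as the classical weighted theorem (\cite{63}, Lemma 4.6) and running the test-function argument only for $0<\lb<1$. Your proof becomes complete if you either do the same, or replace the test function at $\lb=0$ by, say, $t^{-1/p}|\ln t|^{-\theta}\chi_{[0,1/2]}$ with $\frac{1}{p}<\theta\le 1$. A cosmetic point in the same spirit: for $\ell=\infty$ the function $\chi_{[\ell/2,\ell]}$ is meaningless and should be replaced by, e.g., $\chi_{[1,2]}$, or one should first reduce to finite $\ell$ as the paper does.
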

\begin{proof} \textit{The "if" part}. \
The case $\lb=0$ is well known (Babenko weighted theorem, see for instance, \cite{207}, p.30). Let
$\lb\ne 0$.    By Corollary \ref{cor}, the boundedness of $S_\al$ is equivalent to that of the
difference
$$Kf (x):=(S_\al-S)f(x)= \frac{1}{\pi}\intl_0^\ell \frac{x^\al-t^\al}{t^\al(t-x)}f(t)\,dt.$$
By Corollary \ref{cor5}, it suffices  to have the boundedness of the Hardy operators $H_{\bt_1}$
with $ \bt_1=\max(\al,0)$ and $\mathcal{H}_{\bt_2}$ with $ \bt_2=\min(\al,0)$. Applying Theorem
\ref{th1}, we obtain that inequalities \eqref{16} are sufficient for the boundedness of the
operator $K$.

\vspace{2mm}\textit{The "only if" part}. \ It suffices to consider the case $\ell <\infty$.

The necessity of condition \eqref{16} in the case $\lb=0$ is well
known, see for instance \cite{63}, Lemma 4.6. Let $0<\lb<1$.
Suppose that $\al\le \frac{\lb-1}{p}$. To show that the operator
$S_\al$ is not bounded, we choose $f(t)=t^\frac{1-\lb}{p}$, which
is in $\mathcal{L}^{p,\lb}([0,\ell])$ by Lemma \ref{replacelem0}.
Then in the case $\al <\frac{\lb-1}{p}$ we have
\begin{equation}\label{asympt}
S_\al f(x)= \frac{x^\al}{\pi} \intl_0^\ell
\frac{t^{\frac{\lb-1}{p}-\al}}{t-x}dt \sim cx^\al \quad
\textrm{as} \quad x\to 0
\end{equation}
 with $c=
\frac{\ell^{\frac{\lb-1}{p}-\al}}{\frac{\lb-1}{p}-\al}.$ Since
$\al<\frac{\lb-1}{p}$, the function $S_\al f(x)\sim cx^\al$ proves
to be not in $\mathcal{L}^{p,\lb}([0,\ell])$. In the remaining
case $\al=\frac{\lb-1}{p}$, the singular integral
\begin{equation}\label{logarifm}
 \intl_0^\ell \frac{dt}{t-x}\sim \ln \frac{1}{x} \quad \textrm{as}
 \quad  x\to 0
 \end{equation}
 has
a logarithmic singularity and then the function $S_\al f(x)\sim
cx^\al \ln\frac{2\ell}{x}$ proves to be not in
$\mathcal{L}^{p,\lb}([0,\ell])$ by Lemma \ref{replacelem3}.

Finally, the necessity of the condition
$\al<\frac{\lb}{p}+\frac{1}{p^\prime}$ follows from the simple
fact that in the case $\al\ge \frac{\lb}{p}+\frac{1}{p^\prime}$
the weighted singular integral  $S_\al f$ does not exist on all
the functions $f\in \mathcal{L}^{p,\lb}([0,\ell])$. Indeed,  take
$f(t)=t^\frac{\lb-1}{p}\in \mathcal{L}^{p,\lb}([0,\ell])$, then
$\frac{f(t)}{t^\al}=\frac{1}{t^{\al+\frac{1-\lb}{p}}}$ with
$\al+\frac{1-\lb}{p}\ge 1$ is not in $L^1([0,\ell])$, while
belonging of a function to  $L^1$ is  a necessary condition for
the almost everywhere existence of the singular integral.
\end{proof}

\begin{corollary}\label{cor4}
Let $-\infty \le a<b\le \infty$ and $$\varrho(x)=\prod_{k=1}^N|x-x_k|^{\al_k}$$ where $x_k$ are
arbitrary finite points in $[a,b]$. The singular operator $S$ is bounded in the space
$L^{p,\lb}([a,b]\varrho)$, if and only if

\begin{equation}\label{16k}
\frac{\lb}{p}-\frac{1}{p} <\al_k<\frac{\lb}{p}+ \frac{1}{p^\prime}, \quad k=1,2,..., N.
\end{equation}
\end{corollary}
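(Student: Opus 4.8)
\emph{Reduction.} The asserted boundedness of $S$ in $L^{p,\lb}([a,b],\varrho)$ is equivalent to the boundedness of $B:=\varrho S\frac{1}{\varrho}$ in $L^{p,\lb}([a,b])$, exactly as in Section \ref{weightedHilbert}, so I would prove the latter. The plan is to localize around the finitely many singular points $x_1,\dots,x_N$ of $\varrho$. Fix $\dl>0$ smaller than half the least distance between distinct $x_k$, put $\Delta_k=B(x_k,\dl)\cap[a,b]$ for $k=1,\dots,N$ (these are pairwise disjoint), and let $\Delta_0=[a,b]\setminus\bigcup_{k\ge1}\Delta_k$. On each $\overline{\Delta_k}$, $k\ge1$, one has $\varrho(x)=|x-x_k|^{\al_k}w_k(x)$ with $w_k(x)=\prod_{j\ne k}|x-x_j|^{\al_j}$ smooth and bounded from above and below, while on $\overline{\Delta_0}$ the weight $\varrho$ itself is smooth and bounded from above and below. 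Writing $Bf=\sum_{\ell,m}\chi_{\Delta_\ell}B(\chi_{\Delta_m}f)$ I would estimate the diagonal and off-diagonal blocks separately.

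\emph{Diagonal blocks.} For $\ell=m=k\ge1$ the block equals $\chi_{\Delta_k}\,w_k\cdot\big[|\cdot-x_k|^{\al_k}S\big(|\cdot-x_k|^{-\al_k}(\cdot)\big)\big]\cdot w_k^{-1}\chi_{\Delta_k}$. Multiplication by the bounded functions $w_k^{\pm1}$ is bounded in $L^{p,\lb}$, and the bracketed operator is, after the translation handled by Lemma \ref{techn}, precisely the operator $S_{\al_k}$ of Theorem \ref{cor3} on the interval $\Delta_k$; it is therefore bounded under condition \eqref{16k}. For $\ell=m=0$ the weight is regular on $\Delta_0$, so writing $\frac{\varrho(x)}{\varrho(t)}=1+\frac{\varrho(x)-\varrho(t)}{\varrho(t)}$ splits the block into a piece of the bounded operator $S$ (Corollary \ref{cor}) and an operator with bounded kernel. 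The latter is bounded in $L^{p,\lb}([a,b])$ because on a bounded interval one has $L^{p,\lb}([a,b])\hookrightarrow L^p([a,b])$ and $L^\infty([a,b])\hookrightarrow L^{p,\lb}([a,b])$, while a bounded kernel maps $L^p$ into $L^\infty$.

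\emph{Off-diagonal blocks.} For $\ell\ne m$ the kernel $\frac{\varrho(x)}{\pi\varrho(t)(t-x)}$ is singular only as $x,t$ both approach a shared endpoint $c$ of $\Delta_\ell,\Delta_m$, and such $c$ is never a singular point of $\varrho$. Near $c$ the same splitting $\frac{\varrho(x)}{\varrho(t)}=1+\frac{\varrho(x)-\varrho(t)}{\varrho(t)}$ reduces the corresponding part to a piece of $S$ plus a bounded-kernel operator. On the complementary, separated part one has $|t-x|\ge\rho_0>0$, whence $|\chi_{\Delta_\ell}B(\chi_{\Delta_m}f)(x)|\le \frac{C}{\rho_0}\,\varrho(x)\chi_{\Delta_\ell}(x)\intl_{\Delta_m}\frac{|f(t)|}{\varrho(t)}\,dt$. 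Here $\|\varrho\chi_{\Delta_\ell}\|_{p,\lb}<\infty$ by Lemma \ref{replacelem0}, since $\al_\ell>\frac{\lb-1}{p}$ by the lower bound in \eqref{16k}; and a dyadic decomposition of $\Delta_m$ into annuli $\{2^{-j-1}\dl<|t-x_m|<2^{-j}\dl\}$, combined with H\"older's inequality and the defining Morrey estimate for $f$, gives $\intl_{\Delta_m}\frac{|f(t)|}{\varrho(t)}\,dt\le C\sum_{j\ge0}(2^{-j}\dl)^{-\al_m+\frac{1}{p^\prime}+\frac{\lb}{p}}\|f\|_{p,\lb}\le C\|f\|_{p,\lb}$, the series converging precisely because $\al_m<\frac{\lb}{p}+\frac{1}{p^\prime}$ by the upper bound in \eqref{16k}. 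Summing the finitely many blocks yields the boundedness of $B$.

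\emph{Necessity and the main difficulty.} Necessity of \eqref{16k} I would obtain by localizing near a single $x_k$: since $\varrho(t)\sim c_k|t-x_k|^{\al_k}$ there, inserting into $B$ the test functions used in the ``only if'' part of Theorem \ref{cor3}, namely the power $|t-x_k|^{\frac{1-\lb}{p}}$ and the borderline power-logarithm of Lemma \ref{replacelem3}, produces near $x_k$ the same non-Morrey asymptotics whenever $\al_k\le\frac{\lb-1}{p}$ or $\al_k\ge\frac{\lb}{p}+\frac{1}{p^\prime}$, contradicting boundedness. The genuine obstacle is the bookkeeping of the off-diagonal interaction terms, where one must check that the two-sided bound \eqref{16k} enters as expected: the lower bound guarantees $\varrho\in L^{p,\lb}$ near the output singularity $x_\ell$, and the upper bound guarantees integrability of $f/\varrho$ against the Morrey norm near the input singularity $x_m$. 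Once the separated and near-interface contributions are isolated, every block is controlled either by Theorem \ref{cor3}, by the non-weighted boundedness of $S$, or by the two elementary estimates just described.
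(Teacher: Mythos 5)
Your proposal follows essentially the same route as the paper, whose entire proof is a three-sentence sketch: Theorem \ref{cor3} for a single node at a finite endpoint, Lemma \ref{techn} for an interior node, and a ``standard'' partition of unity plus the lattice property of the Morrey norm to pass to $N$ nodes. Your diagonal/off-diagonal block analysis is exactly that standard argument written out: the diagonal blocks reduce to Theorem \ref{cor3} via Lemma \ref{techn}, and your two elementary off-diagonal estimates (that $\varrho\chi_{\Delta_\ell}\in\mathcal{L}^{p,\lb}$ by Lemma \ref{replacelem0}, using the lower bound in \eqref{16k}, and that $\intl_{\Delta_m}|f(t)|\,\varrho(t)^{-1}dt\le C\|f\|_{p,\lb}$ by the dyadic H\"older estimate, using the upper bound) supply the details the paper omits. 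Your localization argument for necessity likewise matches the intended use of the ``only if'' part of Theorem \ref{cor3}. For finite $[a,b]$ the proof is correct and complete.

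The one step that genuinely fails --- and it is a defect you share with the statement itself, which allows $a=-\infty$, $b=+\infty$ --- is the treatment of $\Delta_0$ on an unbounded interval: there $\varrho$ is \emph{not} bounded above and below on $\overline{\Delta_0}$, since $\varrho(x)\sim|x|^{\al_1+\cdots+\al_N}$ as $|x|\to\infty$; consequently the $\ell=m=0$ block is not a regular-weight perturbation of $S$, and $\varrho\chi_{\Delta_0}$ need not belong to $\mathcal{L}^{p,\lb}$. This cannot be repaired, because the corollary as stated is false on unbounded intervals: already for $\lb=0$, take $N=2$, $a=-\infty$, $b=+\infty$ and $\al_1=\al_2=\al$ with $\frac{1}{2p^\prime}<\al<\frac{1}{p^\prime}$; then \eqref{16k} holds for each $k$, but the measure weight $\varrho^p$ behaves like $|x|^{2\al p}$ with $2\al p>p-1$ at infinity, so it is not an $A_p$ weight and $S$ is unbounded on $L^p(\mathbb{R},\varrho)$ by the Hunt--Muckenhoupt--Wheeden theorem. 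An extra condition $-\frac{1}{p}<\sum_k\al_k<\frac{1}{p^\prime}$ (the condition ``at infinity'') must be added when the interval is infinite; this is consistent with Remark \ref{rem4}, where the paper restricts non-power weights to finite intervals for precisely this reason. So: same approach as the paper, correct exactly where the corollary itself is correct, and you should either assume $[a,b]$ finite or add the condition at infinity.
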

\begin{proof}
The case of a singe point $x_1=a$ when $a$ is finite, is covered by Theorem \ref{cor3}. The case
where $x_1>a$ is treated with the help of Lemma \ref{techn}. The reduction of the case of $N$
points to the case of a single point is made in a standard way via a unity partition, thanks to the
fact that Morrey space is a Banach function space, so that $|f(x)|\le |g(x)| \Longrightarrow
\|f\|_{p,\lb} \le \|g\|_{p,\lb}$.
\end{proof}
We arrive at the following result.

\begin{theorem}\label{final}
Let $-\infty<a<b<\infty$. The singular operator $S$ is bounded in the weighted Morrey space
$\mathcal{L}^{p,\lb}([a,b],\varrho), 1<p<\infty, 0\le \lb <1,$ with the weight
$$\varrho(x)=\prod_{k=1}^N\vi_k(|x-x_k|), \quad x_k \in [a,b]
$$
where $\vi_k\in \widetilde{W}\cap \left(V_{++}\cup V_{-+}\right)$, if
\begin{equation}\label{conda}
\vi_k\in \mathbb{Z}_{\frac{\lb}{p}+\frac{1}{p^\prime}}^\frac{\lb-1}{p}, \end{equation}
or
equivalently,
\begin{equation}\label{cond}
\frac{\lb-1}{p}< m(\vi_k) \le M(\vi_k)< \frac{\lb}{p}+\frac{1}{p^\prime}, \quad  k=1,2,...,N.
\end{equation}
\end{theorem}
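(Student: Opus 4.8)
The plan is to follow the same two-stage scheme already used for power weights in Corollary \ref{cor4}: settle the single-point weight, where the machinery of Sections \ref{Hardy}--\ref{weightedHilbert} applies directly, and then pass to the product weight by localization. Throughout I identify $S$ on $[a,b]$ with the Hilbert transform $\mathbb{H}$ after the harmless translation carrying $[a,b]$ onto $[0,\ell]$, $\ell=b-a$ (both the Morrey norm and the operator are translation invariant). Since boundedness of $\mathbb{H}$ in $\mathcal{L}^{p,\lb}([a,b],\varrho)$ is the same as boundedness of $\varrho\,\mathbb{H}\,\tfrac{1}{\varrho}$ in the unweighted space, and $\mathbb{H}$ is itself bounded there by Corollary \ref{cor}, it suffices to bound the operator $K=\varrho\,\mathbb{H}\,\tfrac{1}{\varrho}-\mathbb{H}$ with kernel $\tfrac{1}{\pi}\tfrac{\varrho(x)-\varrho(t)}{\varrho(t)(t-x)}$.

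First I would dispose of the single-point weight $\varrho(x)=\vi(|x-x_0|)$ with $\vi\in\widetilde{W}\cap(V_{++}\cup V_{-+})$. By Lemma \ref{techn} the two-sided kernel reduces to the one-sided case $x_0=0$, and there Corollary \ref{cor1} dominates $K$ pointwise by weighted Hardy operators: if $\vi\in V_{++}$ then $|Kf|\le C H_\vi|f|+C\mathcal{H}_0|f|$, while if $\vi\in V_{-+}$ then $|Kf|\le C H_0|f|+C\mathcal{H}_\vi|f|$, where $H_0,\mathcal{H}_0$ are the unweighted Hardy operators ($\bt=0$ in \eqref{confus}). The latter are bounded for all $1\le p<\infty$, $0\le\lb<1$ by Theorem \ref{th1}, since $0<\tfrac{\lb}{p}+\tfrac{1}{p^\prime}$ and $0>\tfrac{\lb-1}{p}$ hold automatically. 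For the genuinely weighted pieces, Theorem \ref{theor} yields boundedness of $H_\vi$ as soon as $M(\vi)<\tfrac{\lb}{p}+\tfrac{1}{p^\prime}$ and of $\mathcal{H}_\vi$ as soon as $m(\vi)>\tfrac{\lb-1}{p}$. Condition \eqref{cond} secures both inequalities at once, whichever of the two classes $\vi$ belongs to; its equivalence with the Zygmund-class formulation \eqref{conda} is exactly Theorem \ref{tstasda3zl}, since $\vi\in\mathbb{Z}^{(\lb-1)/p}\Leftrightarrow m(\vi)>\tfrac{\lb-1}{p}$ and $\vi\in\mathbb{Z}_{\lb/p+1/p^\prime}\Leftrightarrow M(\vi)<\tfrac{\lb}{p}+\tfrac{1}{p^\prime}$. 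This completes the single-point case.

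To pass to $\varrho=\prod_{k=1}^N\vi_k(|x-x_k|)$ I would localize as in Corollary \ref{cor4}, exploiting that $\mathcal{L}^{p,\lb}$ is a Banach function lattice. Fix $\delta>0$ so small that the intervals $J_k=(x_k-\delta,x_k+\delta)$ are pairwise disjoint, each meeting only its own singular point, and split $f=\sum_{k=1}^N\eta_k f+\eta_0 f$ by a subordinate partition of unity, so that $Kf=\sum_k K(\eta_k f)+K(\eta_0 f)$ and $\|Kf\|_{p,\lb}\le\sum_k\|K(\eta_k f)\|_{p,\lb}+\|K(\eta_0 f)\|_{p,\lb}$. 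On $J_k$ one writes $\varrho=\vi_k(|\cdot-x_k|)\,w_k$ with $w_k=\prod_{j\ne k}\vi_j(|\cdot-x_j|)$ continuous, positive, and (by the $V$-estimates away from the singularities) Lipschitz, hence bounded above and below there; the near-diagonal part of $K(\eta_k f)$ therefore reduces, up to a bounded perturbation produced by $w_k$, to the single-point operator with weight $\vi_k$ already treated. The remaining contributions, where the arguments lie near different singular points and the term $K(\eta_0 f)$, have kernels supported off the diagonal, where $|t-x|$ is bounded below and the weight ratio is controlled; these are majorized through the lattice property by the unweighted $\mathbb{H}$ (Corollary \ref{cor}) together with the local membership properties $\vi_k(|\cdot-x_k|)\in\mathcal{L}^{p,\lb}$ guaranteed by the index bounds in \eqref{cond}. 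Summing the $N+1$ pieces gives the assertion.

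The main obstacle is precisely this multi-point step: organizing the partition of unity so that every cross term, whether coming from two distinct singularities or from differentiating the smooth factor $w_k$, is genuinely non-singular and falls under either the unweighted boundedness of $\mathbb{H}$ or the lattice domination by a single-point Hardy operator. The single-point case itself is essentially bookkeeping layered on Theorem \ref{theor}; it is the interaction terms in the product case where the real care is required.
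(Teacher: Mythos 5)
Your proposal is correct and follows essentially the same route as the paper's own (very terse) proof: reduce to the operator $K=\varrho\,\mathbb{H}\,\frac{1}{\varrho}-\mathbb{H}$ via Corollary \ref{cor}, treat a single-point weight through the pointwise Hardy-operator domination of Corollary \ref{cor1} combined with Theorems \ref{th1} and \ref{theor} (with the equivalence of \eqref{conda} and \eqref{cond} from Theorem \ref{tstasda3zl}), pass to interior nodes by Lemma \ref{techn}, and handle the product weight by a partition of unity using the lattice property of the Morrey norm, which is exactly the paper's ``standard approaches'' as spelled out in the proof of Corollary \ref{cor4}. The only difference is one of detail: you make explicit the cross-term estimates that the paper leaves implicit, and this added bookkeeping is sound.
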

\begin{proof}
The case of a single weight $\varrho(x)=\vi(x-a)$ follows from  Theorem \ref{theor} by the
pointwise  estimates of Corollary \ref{cor1}. The case of a single weight of the form
$\varrho(x)=\vi(|x-x_0|), x_0\in (a,b),$ is easily considered  with the aid of Lemma \ref{techn}.
The passage to the case of a product of such weights is done via the standard approaches.
\end{proof}

\begin{remark}\label{rem4}
When considering the case of non-power weights $\vi(x)$, for simplicity we supposed that  the
interval $[0,\ell]$ for the Hardy operators or the interval $[a,b]$ for the singular operator is
finite. The case of infinite interval also may be considered  for non-power weights, but then we
should somewhat modify definitions and introduce the Matusewska-Orlicz type indices responsible for
the behavior of weights not only at the origin but also at infinity.
\end{remark}

\section{On the non-weighted boundedness of the singular Cauchy operator along Carleson curves}
\label{Singular}
\setcounter{equation}{0}\setcounter{theorem}{0}

Our goal is to extend Theorem \ref{th}  to the case of the Cauchy singular operator \eqref{4ss}
along Carleson curves. We will obtain such an extension  from the boundedness of the maximal
operator in Morrey spaces (in a more general context of metric measure spaces), making use of the
pointwise estimate \eqref{alvar}.

\subsection{Maximal operator in Morrey spaces on metric measure spaces}
The boundedness of the maximal operator
$$Mf(x)=\sup\limits_{r>0}\intl_{B(x,r)} |f(y)|d\mu(y)$$
in a certain version of Morrey spaces  over metric measure spaces $(X,d,\mu)$  was proved in
\cite{26c}. In the recent paper \cite{KM}, the boundedness of the maximal operator on bounded
metric measure space was extended to the case of variable coefficients. The boundedness of the
operator $M$ in the space $L^{p,\lb}(X)$ under condition \eqref{2}, may be derived from \cite{26c}
and \cite{KM}. For completeness of the proof, we will present an independent and direct proof in
Theorem \ref{maximal}.

The following statement well known in the Euclidean setting (\cite{160ba}, Lemma 1;  \cite{644}, p.
53), for homogeneous spaces was proved in  \cite{491a}, Proposition 3.4.

\begin{lemma}\label{lemmaFS}
Let $X$ be a homogeneous metric measure space with $\mu(X)=\infty$.  Then the Feffermann-Stein
inequality
\begin{equation}\label{3}
 \intl_{X}(Mf)(y)^p\,w(y)\,d\mu(y) \le  \intl_{X}f(y)^p\, (Mw)(y)\,d\mu(y)
\end{equation}
 holds  for all non-negative functions $f,\,w$ on $X$.

\end{lemma}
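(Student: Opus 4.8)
The plan is to deduce the strong $(p,p)$ inequality \eqref{3} from a weighted weak-type $(1,1)$ bound by Marcinkiewicz interpolation with a change of the underlying measure. Concretely, I would first establish that $M$ maps $L^1(Mw\,d\mu)$ into $L^{1,\infty}(w\,d\mu)$, i.e. that there is a constant $C$, depending only on the doubling data and the quasi-metric constant $\kappa$, with
\begin{equation}\label{pp-weak}
\intl_{\{Mf>\lambda\}} w(y)\,d\mu(y) \le \frac{C}{\lambda}\intl_X |f(y)|\,(Mw)(y)\,d\mu(y), \qquad \lambda>0 .
\end{equation}
Together with the trivial endpoint $\|Mf\|_{L^\infty(w\,d\mu)}\le \|f\|_{L^\infty(Mw\,d\mu)}$, which holds because $Mw\ge w$ $\mu$-a.e. by the Lebesgue differentiation theorem in homogeneous spaces and because $Mw>0$ $\mu$-a.e. (so a bound $|f|\le A$ for $(Mw\,d\mu)$-a.e.\ $y$ forces $Mf\le A$ everywhere), the Marcinkiewicz theorem for sublinear operators between $L^p$ spaces over different measures yields \eqref{3}, up to a constant $C=C(p,X)$, for every $1<p<\infty$.

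The heart of the matter is \eqref{pp-weak}, which I would prove by a Vitali covering argument valid in homogeneous spaces. Fix $\lambda>0$ and set $E_\lambda=\{Mf>\lambda\}$. For each $x\in E_\lambda$ choose a ball $B_x=B(x,r_x)$ with $\frac{1}{\mu B_x}\intl_{B_x}|f|\,d\mu>\lambda$, whence $\mu B_x< \frac{1}{\lambda}\intl_{B_x}|f|\,d\mu$. The Vitali lemma for homogeneous spaces extracts a countable pairwise disjoint subfamily $\{B_j\}$ and a dilation constant $c\ge 1$ with $E_\lambda\subset\bigcup_j cB_j$. The key geometric observation is that, by the quasi-triangle inequality, for every $y\in B_j$ one has $cB_j\subset B(y,\kappa(1+c)r_j)$, so that the doubling estimate \eqref{2} (both the upper bound on $\mu B(y,\kappa(1+c)r_j)$ and the lower bound on $\mu B_j$) gives
\begin{equation}\label{pp-key}
w(cB_j)\le C\,\mu(B_j)\,(Mw)(y), \qquad y\in B_j .
\end{equation}

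Given \eqref{pp-key}, I would multiply by $|f(y)|$, integrate over $B_j$, divide by $\intl_{B_j}|f|$, and then use the defining inequality $\mu B_j<\lambda^{-1}\intl_{B_j}|f|$ to obtain $w(cB_j)\le \frac{C}{\lambda}\intl_{B_j}|f|\,(Mw)\,d\mu$; summing over the \emph{disjoint} balls $B_j$ and bounding $\intl_{E_\lambda}w\le\sum_j w(cB_j)$ then produces exactly \eqref{pp-weak}. The main obstacle I anticipate is purely the homogeneous-space geometry underlying the covering step: one has only a quasi-metric, so the triangle inequality carries the constant $\kappa$ and one must carefully track how the dilation factor $c$ interacts with the doubling exponent $N$ of \eqref{2}; this is precisely the point where both inequalities in \eqref{2} are essential, since \eqref{pp-key} rests on the uniform comparability $\mu(B(y,\kappa(1+c)r_j))\le C\,\mu(B_j)$. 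A secondary technical point is the rigorous justification of the weighted Marcinkiewicz interpolation together with the a.e.\ inequality $Mw\ge w$; here the standing hypothesis $\mu(X)=\infty$ enters, guaranteeing that the maximal averages are genuinely local and that the measure change from $Mw\,d\mu$ to $w\,d\mu$ in the two endpoints is legitimate.
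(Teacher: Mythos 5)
The paper itself offers no proof of Lemma 5.1: it is quoted from the literature (Fefferman--Stein, Lemma 1, in the Euclidean case, and Pradolini--Salinas, Proposition 3.4, for homogeneous spaces). Your proposal therefore has to be judged against the standard argument behind those references, and in outline you reproduce exactly that argument: a weighted weak-type $(1,1)$ bound from $L^1(Mw\,d\mu)$ to $L^{1,\infty}(w\,d\mu)$ obtained by a covering argument, the trivial $L^\infty$ endpoint, and Marcinkiewicz interpolation between $L^p$ spaces with a change of measure. The individual steps you describe are sound: the inclusion $cB_j\subset B(y,\kappa(1+c)r_j)$ for $y\in B_j$ together with both inequalities of \eqref{2} does give your key estimate $w(cB_j)\le C\,\mu(B_j)\,(Mw)(y)$; the disjointification and summation are correct; the $L^\infty$ endpoint follows, as you say, from $Mw>0$ $\mu$-a.e.\ (the inequality $Mw\ge w$ a.e.\ is not actually needed); and Marcinkiewicz interpolation for sublinear operators tolerates different measures on source and target. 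One cosmetic point: inequality \eqref{3} as printed (with constant $1$) is not what this argument -- or the cited references -- yields; one gets \eqref{3} with a constant $C=C(p,X)$, which is also how the paper uses the lemma in the proof of Theorem 5.2.

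There is, however, one genuine gap: the covering step. The basic Vitali-type covering lemma in a (quasi-)metric measure space requires the radii of the given family to be \emph{uniformly bounded}; without that hypothesis the conclusion is simply false (consider the concentric balls $B(x_0,k)$, $k=1,2,\dots$: any pairwise disjoint subfamily is a single ball, and no fixed dilation of it covers the union). In your situation the radii $r_x$, $x\in E_\lambda$, cannot be bounded a priori: the selection only gives $\mu(B_x)<\lambda^{-1}\intl_{B_x}|f|\,d\mu$, and since $f$ is assumed to lie merely in $L^1(Mw\,d\mu)$ -- not in $L^1(d\mu)$ -- the right-hand side is not controlled by any fixed constant. (This cannot be waved away by saying one only applies the weak-type bound to the truncated piece $f\chi_{\{|f|>\gamma\lambda\}}$ in the interpolation: that piece, too, is only in $L^1(Mw\,d\mu)$.) The standard repair is to prove the weak-type inequality for the truncated operators
\begin{equation*}
M_Rf(x)=\sup_{0<r\le R}\frac{1}{\mu B(x,r)}\intl_{B(x,r)}|f(y)|\,d\mu(y),
\end{equation*}
whose competing balls have radii at most $R$, so that the covering lemma applies with a dilation constant and final bound independent of $R$, and then let $R\to\infty$ using $M_Rf\uparrow Mf$ and monotone convergence of $w(\{M_Rf>\lambda\})$. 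Finally, your closing remark on the role of $\mu(X)=\infty$ is imprecise: under condition \eqref{2}, which is imposed for all $r>0$, the space is automatically unbounded and of infinite measure, and nothing in your argument uses this hypothesis in the way you describe; it is simply inherited from the formulation of the cited result.
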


\begin{theorem}\label{maximal}
Let $X$ be a metric measure space with $\mu(X)=\infty$. Under condition \eqref{2}, the maximal
operator $M$ is bounded in the space $L^{p,\lb}(X), 1<p<\infty, 0\le \lb<N$.
\end{theorem}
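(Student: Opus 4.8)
The plan is to reduce the Morrey estimate to a weighted $L^p$ estimate for the maximal function of a single ball indicator, for which the weighted Fefferman--Stein inequality of Lemma \ref{lemmaFS} is exactly the right tool. Since $Mf$ depends only on $|f|$, I may assume $f\ge 0$. Fixing $x_0\in X$ and $r>0$ and writing $B=B(x_0,r)$, the quantity to control is $r^{-\lb}\int_{B}(Mf)^p\,d\mu$, uniformly in $x_0$ and $r$; taking the supremum then yields $\|Mf\|_{p,\lb}\le C\|f\|_{p,\lb}$.

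First I would apply Lemma \ref{lemmaFS} with the weight $w=\chi_{B}$. Since $w$ is non-negative and $\mu(X)=\infty$, this gives
\[
\int_{B}(Mf)^p\,d\mu=\int_X (Mf)^p\,\chi_{B}\,d\mu \le C\int_X f^p\,M\chi_{B}\,d\mu ,
\]
so that everything is reduced to understanding the decay of $M\chi_{B}$ away from $B$. Here I would prove the geometric estimate
\[
M\chi_{B}(y)\le C\,\min\Bigl\{1,\Bigl(\tfrac{r}{d(x_0,y)}\Bigr)^N\Bigr\},
\]
using only condition \eqref{2} and the quasi-triangle inequality: if a ball $B(y,\rho)$ meets $B$ then $d(x_0,y)\le K(\rho+r)$, so once $d(x_0,y)>2Kr$ one is forced to take $\rho>d(x_0,y)/(2K)$, whence $\mu(B(y,\rho)\cap B)/\mu(B(y,\rho))\le \mu(B)/\mu(B(y,\rho))\le (C_2/C_1)(r/\rho)^N\le C(r/d(x_0,y))^N$. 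For $d(x_0,y)\le 2Kr$ the trivial bound $M\chi_B\le 1$ suffices.

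With this decay in hand I would split $X$ into the central ball $B(x_0,2Kr)$ and the dyadic annuli $A_k=\{2^k\cdot 2Kr\le d(x_0,y)<2^{k+1}\cdot 2Kr\}$, $k\ge0$. On the central ball $M\chi_B\le1$ and $\int_{B(x_0,2Kr)}f^p\,d\mu\le (2Kr)^\lb\|f\|_{p,\lb}^p$; on $A_k$ one has $M\chi_B\le C2^{-kN}$ and $\int_{A_k}f^p\,d\mu\le (2^{k+1}\cdot 2Kr)^\lb\|f\|_{p,\lb}^p\le C2^{k\lb}r^\lb\|f\|_{p,\lb}^p$ directly from the Morrey norm. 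Summing,
\[
\int_X f^p\,M\chi_B\,d\mu\le C r^\lb\|f\|_{p,\lb}^p\Bigl(1+\sum_{k\ge0}2^{-k(N-\lb)}\Bigr),
\]
and the geometric series converges precisely because $0\le\lb<N$. Dividing by $r^\lb$ and taking the supremum over $x_0,r$ completes the argument.

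The step I expect to be the main obstacle is the decay estimate for $M\chi_B$: it must be carried out with the quasi-distance rather than a genuine metric, so one has to track the constant $K$ from the quasi-triangle inequality and verify that ``$B(y,\rho)$ meets $B$'' really forces $\rho\gtrsim d(x_0,y)$; it is here, and in the convergence of the annular sum, that both the constant-dimension hypothesis \eqref{2} and the restriction $\lb<N$ are genuinely used. An equivalent route is the classical Chiarenza--Frasca splitting $f=f\chi_{B(x_0,2Kr)}+f\chi_{X\setminus B(x_0,2Kr)}$: the local term is handled by the $L^p(X)$-boundedness of $M$, itself the case $w\equiv1$ of Lemma \ref{lemmaFS} since $M1\equiv1$, and the global term by H\"older's inequality together with the lower bound in \eqref{2}, which yields the uniform estimate $M\bigl(f\chi_{X\setminus B(x_0,2Kr)}\bigr)(y)\le Cr^{(\lb-N)/p}\|f\|_{p,\lb}$ for $y\in B$.
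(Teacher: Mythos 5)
Your proposal is correct and follows essentially the same route as the paper's proof: the Fefferman--Stein inequality of Lemma \ref{lemmaFS} applied with weight $w=\chi_{B(x_0,r)}$, the decay estimate \eqref{estim} for $M\chi_{B(x_0,r)}$, and the dyadic annular decomposition whose geometric series converges precisely because $\lb<N$. The only difference is cosmetic: the paper cites the literature for \eqref{estim}, whereas you supply its short proof from the quasi-triangle inequality and condition \eqref{2}.
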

\begin{proof}
We follow the main lines of the proof in \cite{87b} for the case $X=\mathbb{R}^n$.  By
Fefferman-Stein inequality \eqref{3}, we obtain
$$
\intl_{B(x,r)}\left(Mf(y)\right)^{p}  dy\le
 C\intl_{X}|f(y)|^{p} M\chi_{B(x,r)}(y)\,
dy
$$
$$\le C\intl_{B(x,r)}|f(y)|^{p} (y)dy + C\sum\limits_{j=0}^\infty \intl_{B(x,2^{j+1}r)\backslash
B(x,2^{j}r)}|f(y)|^{p} M\chi_{B(x,r)}(y)\, dy. $$
We make use of the estimate
\begin{equation}\label{estim}
M\chi_{B(x,r)}(y)\le C\frac{r^N}{(d(x,y)+r)^N}, \quad x,y \in X,
\end{equation}
valid under condition \eqref{2}, which is well known in the Euclidean case and the proof in our
case is in main the same as, for instance, in \cite{69ab}, p. 160-161, thanks to condition
\eqref{2}. We then  obtain
$$
\intl_{B(x,r)}\left(Mf(y)\right)^{p}  dy\le C\intl_{B(x,r)}|f(y)|^{p} (y)dy +
C\sum\limits_{j=1}^\infty \intl_{B(x,2^{j+1}r)\backslash B(x,2^{j}r)}|f(y)|^{p} M\chi_{B(x,r)}(y)\,
dy$$
$$\le C\intl_{B(x,r)}|f(y)|^{p} (y)dy +
\sum\limits_{j=1}^\infty\frac{C}{[2^j+1]^N} \intl_{B(x,2^{j+1}r)}|f(y)|^{p}\, dy.$$
Hence
$$
\|Mf\|_{\mathcal{L}^{p,\lb}}=\sup\limits_{x,r}\frac{1}{r^\lb}\intl_{B(x,r)}\left(Mf(y)\right)^{p}
dy\le \sup\limits_{x,r}\frac{C}{r^\lb}\intl_{B(x,r)}|f(y)|^{p}  dy
$$
\begin{equation}\label{prosto}
+\sum\limits_{j=1}^\infty\frac{C}{[2^j+1]^{N-\lb}}\sup\limits_{x,r}
     \frac{1}{r^\lb}\intl_{B(x,r)}|f(y)|^{p}  dy = C_1
 \|f\|_{\mathcal{L}^{p,\lb}}.
\end{equation}
\end{proof}

Let
$$M^\#f(x)=\sup\limits_{r>0}\frac{1}{|B(x,r)|}\intl_{B(x,r)}\left|f(y)-f_{B(x,r)}\right|d\mu(y), \quad
f_{B(x,r)}= \intl_{B(x,r)}f(y)d\mu(y).$$

 To deal with the boundedness of the singular operator via pointwise estimate \eqref{alvar}, we
  also need the following Fefferman-Stein inequality
in Morrey-norms for metric measure spaces (proved in \cite{160bz} in the case $X=\mathbb{R}^n$).

\begin{lemma}\label{lemFSM}
Let $X$ be a metric measure space with $\mu(X)=\infty$. Under condition \eqref{2}
$$\|Mf\|_{L^{p,\lb}(X)}\le C \|M^\#f\|_{L^{p,\lb}(X)},\quad 1<p<\infty, 0\le \lb<N.$$
\end{lemma}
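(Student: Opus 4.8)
The plan is to reproduce, inside the Morrey scale, the classical Fefferman--Stein argument, whose engine is a good-$\lb$ inequality relating the distribution functions of $Mf$ and $M^\#f$. Since the Morrey norm is a supremum of localized $L^p$ means, the good-$\lb$ inequality must be used in a \emph{local} form, and the absorption that closes the argument must be carried out at the level of the norm $\|\cdot\|_{p,\lb}$ rather than of a single integral. Throughout I take $f\in L^{p,\lb}(X)$; then Theorem \ref{maximal} gives $Mf\in L^{p,\lb}(X)$, so $\|Mf\|_{p,\lb}<\infty$, and this finiteness is exactly what will legitimize the absorption. The hypothesis $\mu(X)=\infty$ is essential here: together with \eqref{2} and $\lb<N$ it forces $\frac{1}{r^\lb}\mu B(x,r)\ge C_1 r^{N-\lb}\to\infty$ as $r\to\infty$, so that nonzero constants do not lie in $L^{p,\lb}(X)$. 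This is what makes the inequality (on which $M^\#$ of a constant vanishes while $M$ of it does not) true rather than false.

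The first step is a local good-$\lb$ inequality: for every ball $B=B(x_0,R)$, a fixed dilate $B^\ast$, and all small $\gm>0$,
\[
\mu\bigl(\{y\in B:\ Mf(y)>2\lb,\ M^\#f(y)\le\gm\lb\}\bigr)\le C\gm\,\mu\bigl(\{y\in B^\ast:\ Mf(y)>\lb\}\bigr),
\]
with $C$ independent of $f,\lb,\gm,B$. To prove it I would split $f=f\chi_{B^\ast}+f\chi_{X\setminus B^\ast}$; for $y\in B$ any ball realizing the far part has radius bounded below by a multiple of $R$, so that piece is essentially constant on $B$ and can be compared to $\lb$, after which the estimate reduces to a genuinely local Calder\'on--Zygmund decomposition of $\{M(f\chi_{B^\ast})>\lb\}$ at height $\lb$. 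On each selected ball of that decomposition the oscillation of $f$ is controlled by $M^\#f$ at an admissible point, and the weak $(1,1)$ bound for the maximal operator closes the count; condition \eqref{2} supplies both the dyadic/Whitney covering of homogeneous type and the doubling $\mu B(x,2r)\le C\mu B(x,r)$ used to pass between averages on a ball and on its dilate.

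The second step is the passage to the Morrey norm. Fixing $B=B(x_0,R)$ and writing the $L^p$ mean of $Mf$ over $B$ through its distribution function, I would split the level set as usual, bound one part by $C\gm^{-p}\int_B(M^\#f)^p$ and insert the local good-$\lb$ inequality into the other, obtaining
\[
\intl_B(Mf)^p\,d\mu\le C\gm\intl_{B^\ast}(Mf)^p\,d\mu+\frac{C}{\gm^p}\intl_B(M^\#f)^p\,d\mu .
\]
Dividing by $R^\lb$ and using $\frac{1}{R^\lb}\int_{B^\ast}(Mf)^p\le C\|Mf\|_{p,\lb}^p$ and $\frac{1}{R^\lb}\int_B(M^\#f)^p\le\|M^\#f\|_{p,\lb}^p$, then taking the supremum over all balls $B$, yields
\[
\|Mf\|_{p,\lb}^p\le C\gm\,\|Mf\|_{p,\lb}^p+\frac{C}{\gm^p}\|M^\#f\|_{p,\lb}^p .
\]
Choosing $\gm$ so small that $C\gm\le\frac12$ and absorbing the (finite) first term gives the claim.

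The main obstacle is the localization, that is the proof of the local good-$\lb$ inequality with the right-hand side living on the fixed dilate $B^\ast$. The difficulty is that $Mf$ at a point of $B$ feels $f$ on arbitrarily large balls, so the far contribution must be separated and shown not to spoil the comparison to $\mu(\{Mf>\lb\}\cap B^\ast)$; equivalently, one must keep the whole good-$\lb$ mechanism confined to a bounded dilate of $B$ while still having a boundary against which to run the Calder\'on--Zygmund covering. Once this local inequality is in hand, the Morrey bound follows by the routine distribution-function computation and the norm-level absorption above, the finiteness $\|Mf\|_{p,\lb}<\infty$ from Theorem \ref{maximal} (and hence the hypothesis $\mu(X)=\infty$) being used precisely at the absorption.
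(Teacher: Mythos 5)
Your reduction to a \emph{local} good-$\lb$ inequality is the step that fails, and it fails irreparably: the inequality in your first step is false for every fixed dilate $B^\ast=B(x_0,KR)$, no matter how $K$ is chosen. Take $X=\mathbb{R}$ (so $N=1$), $B=B(0,1)$, and for large $T$ let
$$f_T(t)=3\min\left\{1,\Bigl(1-\tfrac{\ln|t|}{\ln T}\Bigr)_{+}\right\},$$
so that $f_T\equiv 3$ on $[-1,1]$, $f_T$ decays logarithmically to $0$ on $1\le|t|\le T$, and $f_T\equiv 0$ for $|t|\ge T$; note $f_T\in L^{p,\lb}(\mathbb{R})$. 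Since $f_T$ is a $\frac{3}{\ln T}$-Lipschitz function of $\ln|t|$, a direct computation gives $M^\#f_T(y)\le \frac{C_0}{\ln T}$ for \emph{every} $y\in\mathbb{R}$, with $C_0$ an absolute constant, while $Mf_T\equiv 3$ on $B$. Now run your inequality at height $1$ with $\gm=C_0/\ln T$: the left-hand side equals $\mu\{y\in B:\ Mf_T(y)>2,\ M^\#f_T(y)\le\gm\}=\mu(B)=2$, whereas the right-hand side is at most $C\gm\,\mu(B^\ast)=2KC\gm\to 0$ as $T\to\infty$. The integrated inequality of your second step dies with it: fixing $\gm\le \frac{1}{2CK}$, the term $C\gm\intl_{B^\ast}(Mf_T)^p d\mu$ is at most $\frac12\intl_{B}(Mf_T)^p d\mu$, so the remaining term $C\gm^{-p}\intl_B(M^\#f_T)^p d\mu\le 2C\bigl(\tfrac{C_0}{\gm\ln T}\bigr)^p\to 0$ would have to dominate $\intl_B(Mf_T)^pd\mu=2\cdot 3^p$, which is absurd. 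The conceptual reason is that the Fefferman--Stein mechanism is inherently global: smallness of $M^\#f$ brings $Mf$ down only by following the averages of $f$ along a chain of scales until those averages are themselves small, and (as $f_T$ shows) the number of scales needed grows like $1/\gm$, so the relevant geometry escapes every fixed dilate of $B$. Inside $B^\ast$ there need not exist any anchor point where $Mf$ is below the height, and without an anchor the Whitney/Calder\'on--Zygmund count in the good-$\lb$ argument has nothing to compare against.

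This is exactly the difficulty the paper's proof is designed to avoid. The paper never localizes a good-$\lb$ inequality; it keeps the \emph{global} weighted Fefferman--Stein inequality \eqref{weightedFS} for $A_\infty$ weights on homogeneous spaces and applies it with the Coifman--Rochberg $A_1$ weight $w=[M\chi_{B(x,r)}]^{\ve}$, $0<\ve<1$. The essential feature is that this weight does not vanish off $B(x,r)$: by \eqref{estim} it decays like $\bigl(r/(d(x,y)+r)\bigr)^{N\ve}$, so the resulting bound
$$\intl_{B(x,r)}(Mf)^p\,d\mu\le C\intl_{B(x,r)}(M^\#f)^p\,d\mu+C\sum_{j\ge 0}2^{-jN\ve}\intl_{B(x,2^{j+1}r)}(M^\#f)^p\,d\mu$$
retains, with geometrically decaying weights, precisely those far contributions of $M^\#f$ that your localization throws away and that $f_T$ exploits. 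Dividing by $r^\lb$ and estimating each annulus term by $(2^{j+1}r)^{\lb}\|M^\#f\|^p_{p,\lb}$ yields a convergent series exactly when $\ve>\lb/N$, whence the paper's choice $\ve\in(\lb/N,1)$. (Your norm-level absorption via Theorem \ref{maximal} is legitimate as far as it goes, but it is moot: the step feeding it is false, and the paper's argument needs no absorption.) Any repair of your scheme would have to replace $\intl_B(M^\#f)^p d\mu$ on the right of the local inequality by just such a weighted sum over all dilates $2^jB$ --- at which point you have reconstructed the paper's proof rather than found an alternative to it.
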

\begin{proof}
The proof is in fact the same as in  \cite{160bz}. We make use of the following weighted
Fefferman-Stein inequality in $L_p$-norms
\begin{equation}\label{weightedFS}
\intl_{X}|Mf(x)|^pw(x)dx \le C \intl_X |M^\#f(x)|^pw(x)dx, \quad w\in A_\infty, f\in L^p(X,w)
\end{equation}
valid for homogeneous metric measure spaces, see \cite{187}, p. 184. According to Coifman-Rochberg
\cite{96a} characterization of $A_1$, the function $\left[M\chi_{B(x,r)}\right]^\ve, \ 0<\ve<1$, is
in $A_1$ (see \cite{182za}, Proposition 3.1 for the case of homogeneous spaces). Since
$\chi_{B(x,r)}\le M\chi_{B(x,r)}\le \left[M\chi_{B(x,r)}\right]^\ve$, by \eqref{weightedFS} we
obtain
$$\intl_{B(x,r)}|Mf(y)|^pd\mu(y)\le C\intl_X |Mf(y)|^p \left[M\chi_{B(x,r)}(y)\right]^\ve
d\mu(y) \le C\intl_X |M^\#f(y)|^p \left[M\chi_{B(x,r)}(y)\right]^\ve d\mu(y)$$
$$\le \intl_{B(x,r)} |M^\#f(y)|^p  d\mu(y)+
\sum\limits_{j=0}^\infty  \frac{C}{(2^{j+1}+1)^{N\ve}}\intl_{B\left(x,2^{j+1}r\right)}
|M^\#f(y)|^pd\mu(y),$$ where \eqref{estim} have been used. Then similarly to estimations in
\eqref{prosto} we arrive at the statement of the lemma under the choice $\ve\in
\left(\frac{\lb}{n},1\right)$.
\end{proof}

\subsection{Singular Cauchy operator along Carleson curves in Morrey spaces; non-weighted case}

The following theorem was proved in a recent paper \cite{KM} in case of bounded curves, but in a
more general setting of variable exponents.

\begin{theorem}\label{theorsing}
Let $\Gm$ be a Carleson curve.  The singular operator $S_\Gm$ is bounded in the space
$L^{p,\lb}(\Gm), \quad 1<p<\infty, 0\le \lb<1$.
\end{theorem}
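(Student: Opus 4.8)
The plan is to derive the estimate $\|S_\Gm f\|_{p,\lb}\le C\|f\|_{p,\lb}$ from the pointwise bound \eqref{alvar} by coupling it with the boundedness of the maximal operator (Theorem \ref{maximal}) and the Morrey-norm Fefferman--Stein inequality (Lemma \ref{lemFSM}), exploiting the homogeneity of the Morrey norm under $h\mapsto|h|^s$. Fix once and for all some $s\in(0,1)$ and set $g=|S_\Gm f|^s$. Directly from definition \eqref{replace1} one has the two identities
$$\|S_\Gm f\|_{p,\lb}=\big\||S_\Gm f|^{s}\big\|_{\frac ps,\lb}^{1/s},\qquad \big\|(Mf)^{s}\big\|_{\frac ps,\lb}=\|Mf\|_{p,\lb}^{s},$$
so it suffices to control $g$ in $\mathcal{L}^{\frac ps,\lb}(\Gm)$. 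Since a Carleson curve has constant dimension $N=1$ and the exponents satisfy $\frac ps>1$, $0\le\lb<1$, both Theorem \ref{maximal} and Lemma \ref{lemFSM} are available with the exponent $\frac ps$.

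First I would run the chain
$$\|g\|_{\frac ps,\lb}\le\|Mg\|_{\frac ps,\lb}\le C\,\|M^{\#}g\|_{\frac ps,\lb}=C\,\big\|M^{\#}(|S_\Gm f|^{s})\big\|_{\frac ps,\lb}\le C\,\big\|(Mf)^{s}\big\|_{\frac ps,\lb},$$
in which the first step uses $g\le Mg$ a.e.\ (Lebesgue differentiation), the second is Lemma \ref{lemFSM}, and the last is precisely the Alvarez--P\'erez estimate \eqref{alvar}. Feeding in the two homogeneity identities and the maximal bound $\|Mf\|_{p,\lb}\le C\|f\|_{p,\lb}$ of Theorem \ref{maximal}, and then taking $s$-th roots, gives
$$\|S_\Gm f\|_{p,\lb}\le C\,\|Mf\|_{p,\lb}\le C\,\|f\|_{p,\lb},$$
which is the desired inequality.

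Two points must be handled with care, and they are where the real work lies. The middle inequality $\|Mg\|_{\frac ps,\lb}\le C\|M^{\#}g\|_{\frac ps,\lb}$ is only meaningful once its left-hand side is known a priori to be finite. I would secure this by first replacing $S_\Gm$ with its truncations $S_\Gm^{\ve}$, whose kernels vanish for $|\tau-t|<\ve$: for bounded $\Gm$ one has $|S_\Gm^{\ve}f|\le (\pi\ve)^{-1}\|f\|_{L^1(\Gm)}<\infty$, hence $|S_\Gm^{\ve}f|^{s}\in L^{\infty}(\Gm)\subset\mathcal{L}^{\frac ps,\lb}(\Gm)$, so the chain applies verbatim to $g_\ve=|S_\Gm^{\ve}f|^{s}$ and produces a bound uniform in $\ve$; letting $\ve\to0$ with $S_\Gm^{\ve}f\to S_\Gm f$ a.e.\ and invoking Fatou's lemma yields the estimate for $S_\Gm f$ itself. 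The main obstacle is the discrepancy between the bounded curve $\Gm$, for which $\mu\Gm=\ell<\infty$ and the constant-dimension condition \eqref{2} holds only for radii up to order $\mathrm{diam}\,\Gm$, and the standing hypothesis $\mu(X)=\infty$ under which Theorem \ref{maximal} and Lemma \ref{lemFSM} were proved. I would resolve this by noting that $\mu(X)=\infty$ enters those proofs solely through the weighted Fefferman--Stein inequalities \eqref{3} and \eqref{weightedFS}, whereas the rest of each argument rests only on the kernel bound \eqref{estim} and the convergence of the geometric series $\sum_j 2^{-j(N-\lb)}$ (guaranteed by $N-\lb>0$); since in the Morrey supremum only radii comparable to $\mathrm{diam}\,\Gm$ are effective, both auxiliary statements carry over to the bounded Carleson curve with unchanged proofs, and the derivation above goes through.
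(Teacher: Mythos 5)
Your main chain of inequalities is exactly the paper's proof: write $\|S_\Gm f\|_{p,\lb}$ via the $s$-th power identity, insert $g\le Mg$, apply Lemma \ref{lemFSM}, then the Alvarez--P\'erez estimate \eqref{alvar}, then Theorem \ref{maximal}. The genuine problem is in your final paragraph, where you dispose of the hypothesis $\mu(X)=\infty$ by asserting that Theorem \ref{maximal} and Lemma \ref{lemFSM} ``carry over to the bounded Carleson curve with unchanged proofs.'' This is false, and not a removable technicality: on a \emph{finite} measure space Lemma \ref{lemFSM} fails as stated. Indeed, take $f\equiv 1$ on a bounded curve $\Gm$; then $M^{\#}f\equiv 0$ while $Mf\equiv 1$, and $\|1\|_{p,\lb}$ is finite and nonzero on a bounded Carleson curve (it is $\sup_{t,r} r^{-\lb/p}\mu\Gm(t,r)^{1/p}<\infty$), so no inequality $\|Mf\|_{L^{p,\lb}(\Gm)}\le C\|M^{\#}f\|_{L^{p,\lb}(\Gm)}$ can hold. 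The same obstruction defeats the weighted inequality \eqref{weightedFS} on finite measure spaces, which is precisely where $\mu(X)=\infty$ is used; so your own observation that the infinity of the measure ``enters solely through \eqref{3} and \eqref{weightedFS}'' contradicts, rather than supports, the claim that the proofs go through unchanged. Your argument therefore invokes a lemma that is not available in the setting where you apply it.

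The paper resolves this in the opposite direction, and the fix is short: a bounded Carleson curve $\Gm$ is extended to an \emph{infinite} Carleson curve $\widetilde\Gm\supset\Gm$ (e.g.\ by attaching a ray), a function $f$ on $\Gm$ is extended by zero, the Morrey norm is preserved under this extension, and $S_{\widetilde\Gm}\widetilde f$ coincides with $S_\Gm f$ on $\Gm$ because the kernel is only integrated over the support of $f$. Hence it suffices to prove the theorem for infinite curves, where $\mu(X)=\infty$ and both Theorem \ref{maximal} and Lemma \ref{lemFSM} apply as stated. (Constants then cause no trouble, since they do not belong to $\mathcal{L}^{p,\lb}$ of an infinite curve.) Your preliminary truncation step $S_\Gm^{\ve}$, introduced to secure a priori finiteness before using Lemma \ref{lemFSM}, is a reasonable extra precaution that the paper omits, but it does not repair the finite-measure gap: even for the bounded functions $g_\ve=|S_\Gm^{\ve}f|^{s}$ the inequality of Lemma \ref{lemFSM} on the bounded curve is simply not a true statement to invoke. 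Replace your last paragraph by the extension-by-zero reduction and the proof is correct.
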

\begin{proof} Since a function on a bounded Carleson curve may be continued by zero to an infinite
Carleson curve with the preservation of the Morrey space, it suffices to consider the case where
$\Gm$ is an infinite curve.

Having the pointwise estimate  \eqref{alvar} in mind, we make use of the property of the norm
$$\|f\|_{p,\lb}=\|f^s\|_{\frac{p}{s},\lb}, \ \ 0<s<1$$
and have
$$\|S_\Gm f\|_{p,\lb}=\|(S_\Gm f)^s\|_{\frac{p}{s},\lb}\le \|M[(S_\Gm f)^s]\|_{\frac{p}{s},\lb}.$$
Then by  Lemma \ref{lemFSM} and estimate \eqref{alvar} we obtain
$$\|S_\Gm f\|_{p,\lb}\le C \|M^\#[(S_\Gm f)^s]\|_{\frac{p}{s},\lb}\le
C \|(Mf)^s]\|_{\frac{p}{s},\lb}=C \|Mf\|_{p,\lb}.$$ It remains to apply Theorem \ref{maximal}.
\end{proof}

\section{Singular Cauchy operator along Carleson curves in weighted Morrey spaces}\label{finalsection}
\setcounter{equation}{0}\setcounter{theorem}{0}

 Let $\Gm$ be a Carleson curve, $t_k\in\Gm,
k=1,...,N,$ and $\varrho$ a weight of form \eqref{vydel} with $ \vi_k\in \widetilde{W}\cap
\left(V_{++}\cup V_{-+}\right)$.
\begin{theorem}\label{final}\\
I) \ Let the curve $\Gm$ satisfy the arc-chord condition. The operator $S_\Gm$ is bounded in the
Morrey space  $\mathcal{L}^{p,\lb}, \ 1<p<\infty, 0\le \lb<1$, with weight \eqref{vydel}, if
condition \eqref{conda} (or equivalent condition \eqref{cond}) is satisfied. \\
II) \ Let the curve $\Gm$ satisfy the arc-chord condition and be
smooth in  neighborhoods of the nodes $t_k, k=1,...,N,$ of  the
weight. In the case of power weights $\vi_k(r)=r^{\al_k}$ the
corresponding condition \eqref{cond}, that is, $\frac{\lb-1}{p}<
\al_k < \frac{\lb}{p}+\frac{1}{p^\prime}, \quad k=1,2,...,N, $ is
also necessary for the
boundedness.\\
III) \ Statement I)  remains valid on a Carleson curve  $\Gm$ for the space $L_\ast^{p,\lb}(\Gm)$,
under the assumption that the curve $\Gm$
 has the arc-chord property only  at the nodes $t_k, k=1,...,N$ of  the weight.
\end{theorem}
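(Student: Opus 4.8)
The plan is to reduce the weighted problem to the two ingredients already at our disposal: the non-weighted boundedness of $S_\Gm$ on Carleson curves (Theorem \ref{theorsing}) and the boundedness of the weighted Hardy operators $H_\vi,\mathcal{H}_\vi$ in Morrey spaces on an interval (Theorem \ref{theor}). By the partition-of-unity argument already used for Corollary \ref{cor4} (legitimate because the Morrey space is a Banach function space, so $|f|\le|g|$ forces $\|f\|_{p,\lb}\le\|g\|_{p,\lb}$), it suffices to treat a single factor $\vi(|t-t_0|)$ with $\vi\in\widetilde{W}\cap(V_{++}\cup V_{-+})$, $t_0\in\Gm$. The boundedness of $S_\Gm$ in $\mathcal{L}^{p,\lb}(\Gm,\varrho)$ is equivalent to the boundedness of $\varrho S_\Gm\frac{1}{\varrho}$ in $\mathcal{L}^{p,\lb}(\Gm)$, and I would write $\varrho S_\Gm\frac{1}{\varrho}=S_\Gm+K$, where $K$ is the integral operator with kernel $\frac{\vi(|t-t_0|)-\vi(|\tau-t_0|)}{\vi(|\tau-t_0|)(\tau-t)}$. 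The first summand is handled by Theorem \ref{theorsing}, so everything reduces to the boundedness of $K$.

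For $K$ I would first establish the pointwise kernel bound exactly as in Lemma \ref{lem2}: since $\bigl||t-t_0|-|\tau-t_0|\bigr|\le|t-\tau|$ by the triangle inequality, the defining inequalities of $V_{++}$ (respectively $V_{-+}$), applied with $x=|t-t_0|$ and $y=|\tau-t_0|$, reproduce the two-branch estimate \eqref{8} (respectively \eqref{9}). Consequently $|Kf(t)|$ is dominated by the two weighted Hardy-type integrals in \eqref{12}, now taken over the chordal regions $\{|\tau-t_0|<|t-t_0|\}$ and $\{|\tau-t_0|>|t-t_0|\}$ on $\Gm$. The decisive step is to transport these to genuine Hardy operators on an interval. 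Under the arc-chord condition the chord and the arc are comparable, $|t-t_0|\sim|s-s_0|$ (the chord never exceeds the arc, and \eqref{arcchorduniform} supplies the reverse bound); passing to the arc-length parametrization $g(\sigma)=f(t(\sigma))$ and using the doubling and almost-monotonicity of $\vi$ to absorb the comparison constants appearing inside $\vi$, the two chordal regions become $\{|\sigma-s_0|<c|s-s_0|\}$ and $\{|\sigma-s_0|>c|s-s_0|\}$. Splitting each according to the sign of $\sigma-s_0$ and of $s-s_0$ --- the four-quadrant device of Lemma \ref{techn} --- turns $K$ into a finite sum of one-sided operators dominated by $H_\vi$ and $\mathcal{H}_\vi$ on an interval. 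Since under the arc-chord condition $\mathcal{L}^{p,\lb}(\Gm)$ coincides with $\mathcal{L}^{p,\lb}_\ast(\Gm)$ (Remark \ref{rem6}), and the latter is carried by the arc-length map isometrically onto $\mathcal{L}^{p,\lb}(0,\ell)$, Theorem \ref{theor} applies and yields the boundedness of these operators under exactly the conditions \eqref{conda}/\eqref{cond}. This proves Part I. I expect the transfer just described --- dominating the chordal integration region by arc-intervals while keeping the almost-monotonic $\vi$ under control through the comparison constants --- to be the main obstacle.

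For the necessity in Part II I would localise near a node $t_k$. Smoothness of $\Gm$ in a neighbourhood of $t_k$ means the curve has a tangent there, so the arc-chord constant tends to $1$ as $t\to t_k$ and the Cauchy kernel $\frac{1}{\tau-t}$ differs from the flat Hilbert kernel $\frac{1}{\sigma-s}$ by an integrable perturbation near $t_k$. Testing with $f(\tau)=|\tau-t_k|^{\frac{1-\lb}{p}}$, which lies in $\mathcal{L}^{p,\lb}$ by Lemma \ref{replacelem0}, and repeating the asymptotic computations \eqref{asympt}--\eqref{logarifm} on the tangent line, one finds that $S_\Gm f$ behaves like $cx^{\al_k}$, or like $cx^{\al_k}\ln\frac{1}{x}$ in the borderline case, as $t\to t_k$; by Lemmas \ref{replacelem0} and \ref{replacelem3} this fails to belong to $\mathcal{L}^{p,\lb}$ when $\al_k\le\frac{\lb-1}{p}$. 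The upper restriction $\al_k<\frac{\lb}{p}+\frac{1}{p^\prime}$ follows, as in Theorem \ref{cor3}, from the non-existence of the singular integral on $|t-t_k|^{\frac{\lb-1}{p}}$ (failure of local integrability of $f/\varrho$). Hence \eqref{cond} is necessary.

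Finally, for Part III I would take the arc-length distance itself as the quasidistance on $\Gm$, so that the arc-neighbourhoods $\Gm_\ast(t,r)$ are exactly the metric balls and condition \eqref{2} holds with $N=1$; the arc-length map is then an isometry of $\mathcal{L}^{p,\lb}_\ast(\Gm)$ onto $\mathcal{L}^{p,\lb}(0,\ell)$. With this choice the non-weighted boundedness of $S_\Gm$ in $\mathcal{L}^{p,\lb}_\ast(\Gm)$ is obtained by rerunning the proof of Theorem \ref{theorsing} (the maximal bound of Theorem \ref{maximal}, the Fefferman--Stein estimate of Lemma \ref{lemFSM}, and the pointwise bound \eqref{alvar}, all now referred to arc-balls). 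For the commutator $K$ only the behaviour near the nodes matters: away from the nodes the weight and the factors $\frac{\vi(|t-t_0|)}{|t-t_0|}$ and $\frac{1}{\vi(|\tau-t_0|)}$ are bounded above and below, so that part of $K$ has an integrable kernel and is harmless, while near each node the assumed arc-chord property lets me repeat the transfer of Part I and invoke Theorem \ref{theor} once more. This gives the boundedness of $K$, so Statement I) holds in $\mathcal{L}^{p,\lb}_\ast(\Gm)$; the reason one cannot replace $\mathcal{L}^{p,\lb}_\ast$ by $\mathcal{L}^{p,\lb}$ here is precisely the absence of a global arc-chord condition, which is exactly what made the two norms equivalent in Part I.
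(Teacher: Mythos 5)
Your Part I is essentially the paper's own argument: reduction to a single weight, the splitting $\varrho S_\Gm\frac{1}{\varrho}=S_\Gm+K$ with Theorem \ref{theorsing} absorbing the first term, the kernel bounds from the defining inequalities of $\mathbf{V}_{++},\mathbf{V}_{-+}$ combined with $\bigl||t-t_0|-|\tau-t_0|\bigr|\le|\tau-t|$, the passage to arc-length Hardy operators, and the final appeal to Theorem \ref{theor} via Remark \ref{rem6}. One point you wave at ("doubling and almost-monotonicity of $\vi$ absorb the comparison constants") the paper pins down: the equivalence $\vi(|t-t_0|)\sim\vi(|s-s_0|)$ in \eqref{replacejdhg} holds because condition \eqref{cond} gives $M(\vi)<\frac{\lb}{p}+\frac{1}{p^\prime}<1$, hence $\vi\in W_1$, and it is this membership that lets the arc-chord constants pass inside $\vi$. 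With that caveat, Part I is fine.

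In Part II your skeleton (test function, asymptotics \eqref{asympt}--\eqref{logarifm}, Lemmas \ref{replacelem0} and \ref{replacelem3}) matches the paper, but two steps do not hold up. First, the test function must be $|\tau-t_k|^{\frac{\lb-1}{p}}$, the borderline function of Lemma \ref{replacelem0}, not $|\tau-t_k|^{\frac{1-\lb}{p}}$: with your positive exponent the borderline case $\al_k=\frac{\lb-1}{p}$ produces an integrand $|\tau-t_k|^{\frac{2(1-\lb)}{p}}/(\tau-t)$ with no logarithmic singularity, so exactly that case escapes your argument. Second, the claim that near a smooth point the Cauchy kernel differs from the flat Hilbert kernel by an integrable perturbation is unjustified: continuity of $t^\prime$ only gives $t(\sg)-t(s)=(\sg-s)t^\prime(s)+o(|\sg-s|)$, so the difference of kernels is $o(|\sg-s|^{-1})$, not integrable in general (that would need a H\"older/Lyapunov condition on $t^\prime$). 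Moreover, even granting the asymptotics, on a curve the naive power test function yields a limiting constant of the form $\int_\Gm |\tau-t_k|^{\bt}(\tau-t_k)^{-1}d\tau$, which is a complex number that could vanish by cancellation. The paper sidesteps both difficulties at once by building the geometry into the test function, taking $f(\tau)=(\sg-s_0)_+^{\frac{\lb-1}{p}-\al}\cdot\frac{t(\sg)-t(s_0)}{\sg-s_0}\cdot\frac{|t(\sg)-t(s_0)|^\al}{t^\prime(\sg)}$, so that the limiting integrand is positive and only continuity of $t^\prime$ with $|t^\prime|\equiv1$ is used; your proof needs this (or an equivalent) device.

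Part III contains the genuine gap. You propose to take arc-length as the quasidistance and rerun Theorem \ref{theorsing}. Theorem \ref{maximal} and Lemma \ref{lemFSM} do transfer (any metric measure space satisfying \eqref{2}, and the arc metric satisfies it with $N=1$), but the third ingredient, the pointwise estimate \eqref{alvar}, does not: in \cite{317b} the maximal and sharp maximal functions are built on the chord portions $\Gm(t,r)$, and the proof uses that the Cauchy kernel is Calder\'on--Zygmund for the chord distance. For the arc distance the required size bound $|\tau-t|^{-1}\le C|\sg-s|^{-1}$ is precisely the arc-chord condition, i.e.\ exactly the hypothesis Part III drops; on a Carleson curve with a cusp the arc-ball version of \eqref{alvar} is simply not available, and your chain of inequalities breaks at that point. (The paper itself gives no explicit argument for Part III, so you are not alone here.) A way to close the gap without \eqref{alvar} in the arc metric: on any Carleson curve the norms of $\mathcal{L}^{p,\lb}(\Gm)$ and $\mathcal{L}^{p,\lb}_\ast(\Gm)$ are equivalent, because a chord portion $\Gm(t,r)$ has measure at most $C_0r$, so a maximal $r$-separated (in arc-length) subset of it has a bounded number of points, depending only on $C_0$, and the arc portions of radius $r$ centered at these points cover $\Gm(t,r)$; this gives $\|f\|_{p,\lb}\le C\|f\|^\ast_{p,\lb}$, the reverse being trivial. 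With this equivalence the non-weighted boundedness of $S_\Gm$ in $L^{p,\lb}_\ast(\Gm)$ follows from Theorem \ref{theorsing} as stated, and the remainder of your Part III (treatment of $K$, which uses the arc-chord property only at the nodes) goes through as in Part I.
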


\begin{proof}

I) \ As usual, we may consider only the case of a single weight $\varrho(t)=\vi(|t-t_0|), t_0\in\Gm
.$ In view of Theorem \ref{theorsing}, it suffices to prove the boundedness of the operator
\begin{equation}\label{replace7b}
K f(t): \ = \ \left(\varrho S_\Gm\frac{1}{\varrho}-S_\Gm \right) f(t) = \intl_\Gm
K(t,\tau)f(\tau)\, d\mu(\tau),
\end{equation}
 where
$K(t,\tau): = \frac{\varrho(t)-\varrho(\tau)}{\varrho(\tau)(\tau-t)}=
\frac{\vi(|t-t_0|)-\vi(|\tau-t_0|)}{\vi(|\tau-t_0|)(\tau-t)}.$ By the definition of the classes
$\mathbf{V}_{++}, \mathbf{V}_{-+}$, we observe that the kernel $K(t,\tau)$ admits the estimate
\begin{equation}\label{replace8}
|K(t,\tau)|\le \left\{\begin{array}{ll}\cfrac{C\vi(|t-t_0|)}{|t-t_0|\vi(|\tau-t_0|)},  &\
\textrm{if} \ \ |\tau-t_0|<|t-t_0|\\
\cfrac{C}{|t-t_0|}, &\  \textrm{if} \ \ |\tau-t_0|>|t-t_0|
\end{array} \right.
\end{equation}
when $\vi\in \mathbf{V}_{++}$, and
\begin{equation}\label{replace9}
|K(t,\tau)|\le \left\{\begin{array}{ll}\cfrac{C}{|t-t_0|},  &\  \textrm{if} \ \ |\tau-t_0|<|t-t_0|\\
\cfrac{C\vi(|t-t_0|)}{|\tau-t_0|\vi(|t-t_0|)}, &\  \textrm{if} \ \ |\tau-t_0|>|t-t_0|
\end{array} \right.
\end{equation}
when $\vi\in \mathbf{V}_{-+}$.
 Then the operator $K$ is dominated by the weighted
Hardy type operators
\begin{equation}\label{replace10}
|K f(t)| \le C \frac{\vi(|t-t_0|)}{|t-t_0|}\intl_{\Gm_t}
\frac{|f(\tau)|d\mu(\tau)}{\vi(|\tau-t_0|)} + C \intl_{\Gm\backslash\Gm_t}
\frac{|f(\tau)|d\mu(\tau)}{|\tau-t_0|}
\end{equation}
when $\vi \in \mathbb{V}_{++}$, and
\begin{equation}\label{replace10ax}
|K f(t)| \le \frac{C}{|t-t_0|}\intl_{\Gm_t} |f(\tau)|d\mu(\tau) + C
\vi(|t-t_0|)\intl_{\Gm\backslash\Gm_t} \frac{|f(\tau)|d\mu(\tau)}{|\tau-t_0|\vi(|\tau-t_0|)}
\end{equation}
when $\vi \in \mathbb{V}_{-+}$, where  $\Gm_t=\{\tau\in\Gm: |\tau-t_0|<|t-t_0|\}$.

 Note that the condition $\vi\in W_1\cap \widetilde{W}$ guarantees
the equivalence
\begin{equation}\label{replacejdhg}
C_1 \vi(|s-s_0|) \le \vi(|t-t_0|) \le C_2 \vi(|s-s_0|), \ \ t=t(s), \ t_0=t(s_0)
\end{equation}
on curves satisfying the arc-chord condition at the point $t_0$. Since
$\frac{\lb}{p}+\frac{1}{p^\prime}<1$,  condition  \eqref{cond} implies that $\vi\in W_1$ and
therefore, equivalence \eqref{replacejdhg} holds  under the conditions of the theorem.

Without a loss of generality we may assume that the the arc length counts from the point $t_0$,
that is, $s_0=0$ (which may always be supposed in the case of a  closed curve, while for an open
curve this means that $t_0$ must be an end-point; the case where $t_0$ is not, may be easily
covered similarly to Lemma \ref{techn}). Then, in view of \eqref{replacejdhg}, it is easily seen
that estimates \eqref{replace10} and \eqref{replace10ax} are equivalent to the following
"arc-length" form
\begin{equation}\label{replace10a}
|K f(t)| \le C \frac{\vi(s)}{s}\intl_0^s \frac{|f_\ast(\sg)|d\sg}{\vi(\sg)} + C \intl_s^\ell
\frac{|f_\ast(\sg)|}{\sg}d\sg, \quad t=t(s),
\end{equation}
when $\vi \in \mathbb{V}_{++}$, and
\begin{equation}\label{replace10axa}
|K f(t)| \le \frac{C}{s}\intl_0^s |f_\ast(\sg)|d\sg + C \vi(s)\intl_s^\ell
\frac{|f_\ast(\sg)|d\sg}{\sg\vi(\sg)}, \quad t=t(s),
\end{equation}
when $\vi \in \mathbb{V}_{-+}$
 (taking into account that $s_0=0$), where $f_\ast(\sg)=f[t(s)]$.
 It remains to apply Theorem \ref{theor} to the Hardy operators on the right-hand side of
 \eqref{replace10a}-\eqref{replace10axa} keeping  Remark \ref{rem6} in mind.

II) \ The proof of  the necessity of conditions $\frac{\lb-1}{p}<
\al_k < \frac{\lb}{p}+\frac{1}{p^\prime}, \quad k=1,2,...,N, $  in
the case of power weights follows the same line as
 in the
proof of \textit{the "only if"} part of Theorem \ref{cor3}, with
corresponding modifications. We explain the necessary modification
for \eqref{asympt}. Now we have
$$S_\al f (t)= \frac{|t-t_0|^\al}{\pi}\intl_\Gm \frac{|\tau-t_0|^{-\al}f(\tau)}{\tau -t}d\tau
$$ $$ = \frac{|t-t_0|^\al}{\pi}\intl_0^\ell
\frac{|t(\sg)-t(s_0)|^{-\al}f(\tau)\tau^\prime(\sg)}{t(\sg)
-t(s)}d\sg, \quad t_0=t(s_0)\in \Gm.$$ We choose
$$f(\tau)=f[t(\sg)]=(\sg-s_0)^{\frac{\lb-1}{p}-\al}_+ \ \cdot
\frac{t(\sg)-t(s_0)}{\sg-s_0} \cdot
\frac{|t(\sg)-t(s_0)|^\al}{t^\prime(\sg)}$$ where
$(\sg-s_0)^{\frac{\lb-1}{p}-\al}_+=\left\{\begin{array}{ccc}(\sg-s_0)^{\frac{\lb-1}{p}-\al}
 & ,& \sg>s_0\\
0 & , & \sg<s_0\end{array} \right.$ and it is assumed that $s_0\ne
\ell,$ the arguments being easily modified for the case
$s_0=\ell.$ By the smoothness of the curve near the point $t_0$,
that is, the continuity of $t^\prime(\sg)$ near $\sg=s_0$ and the
condition $|t^\prime(\sg)|\equiv 1$, we see that
$$|f(\tau)|\le C |\tau-t_0|^\frac{\lb-1}{p}\in \mathcal{L}^{p,\lb}(\Gm)$$
 by  Lemma  \ref{replacelem0}.
However, under this choice of $f(\tau)$, by the continuity of
$t^\prime(\sg)$ it is easy to see that
$$S_\al f(t) \sim c|t-t_0|^\al \quad \textrm{with} \quad c\ne 0
 \quad \textrm{as} \quad t\to t_0,$$
 as in \eqref{asympt}.

 Finally, it remains to observe that property
\eqref{logarifm} of the singular integral is known to be valid on
an arbitrary Carleson curve, see for instance \cite{63}, pages
118-120.

\end{proof}

\begin{remark}\label{rem5}
In case one uses  weights of the form
$\prod_{k=1}^N\vi_k(|s-s_k|), \ 0\le s_1<s_2<\cdots <s_N < \ell$,
the requirement for $\Gm$ to satisfy the arc-chord condition in
Part \textit{III)} of Theorem \ref{final} may be  omitted as is
easily seen from the proof.
\end{remark}

\section*{Acknowledgements}

 This work was made   under the project
``Variable Exponent Analysis" supported by INTAS grant Nr. 06-1000017-8792.



\end{document}